\def\arXiv{1} 
\newcommand{\notarxiv}[1]{foo}
\newcommand{\arxiv}[1]{ba}
	\renewcommand{\arxiv}[1]{#1}%
	\renewcommand{\notarxiv}[1]{\ignorespaces}%
	\renewcommand{\arxiv}[1]{\ignorespaces}%
	\renewcommand{\notarxiv}[1]{#1}%
\theoremstyle{plain}
\newtheorem{theorem}{Theorem}
\newtheorem{lemma}{Lemma}
\newtheorem{proposition}{Proposition}
\newtheorem{corollary}{Corollary}
\newtheorem{definition}{Definition}
\theoremstyle{definition}
\newtheorem{remark}{Remark}
\newtheorem*{example*}{Example}
\newcommand*{\propenum}[1]{%
	\expandafter\@propenum\csname c@#1\endcsname%
}
\newcommand*{\@propenum}[1]{%
	$\ifcase#1\or1\or1'\or2\or3\or42%
	\else\@ctrerr\fi$%
}
\AddEnumerateCounter{\propenum}{\@propenum}{1}
\newcommand{\mc}[1]{\mathcal{#1}}
\newcommand{\wt}[1]{\widetilde{#1}}  %
\newcommand{\what}[1]{\widehat{#1}}  %
\newcommand{\norm}[1]{\left\|{#1}\right\|} %
\newcommand{\lone}[1]{\norm{#1}_1} %
\newcommand{\ltwo}[1]{\norm{#1}_2} %
\newcommand{\linf}[1]{\norm{#1}_\infty} %
\newcommand{\lfro}[1]{\left\|{#1}\right\|_{\rm F}} %
\newcommand{\norms}[1]{\|{#1}\|} %
\newcommand{\opnorm}[1]{\norm{#1}_{\mathrm{op}}}  %
\newcommand{\lones}[1]{\norms{#1}_1} %
\newcommand{\ltwos}[1]{{\norms{#1}}_2} %
\newcommand{\linfs}[1]{\norms{#1}_\infty} %
\newcommand{\R}{\mathbb{R}} %
\newcommand{\N}{\mathbb{N}} %
\newcommand{\E}{\mathbb{E}} %
\newcommand{\ai}[1][i]{A_{#1:}}
\newcommand{\aj}[1][j]{A_{:#1}}
\newcommand{\nnz}{\textup{nnz}}
\newcommand{\InnerLoop}{\mathtt{InnerLoop}}
\newcommand{\RestartedInnerLoop}{\mathtt{RestartedInnerLoop}}
\newcommand{\OuterLoop}{\mathtt{OuterLoop}}
\newcommand{\OuterLoopGen}{\mathtt{OuterLoopStronglyMonotone}}
\newcommand{\FullLoopsm}{\mathtt{FullLoopStronglyMonotone}}
\let\oldnl\nl%
\newcommand{\nonl}{\renewcommand{\nl}{\let\nl\oldnl}}%
\DeclareMathOperator*{\argmin}{arg\,min}
\providecommand{\sign}{\mathop{\rm sign}}
\newcommand{\ceil}[1]{\left\lceil{#1}\right\rceil}
\newcommand{\half}{\frac{1}{2}}
\newcommand{\defeq}{\coloneqq}
\newcommand{\grad}{\nabla}
\newcommand{\xset}{\mathcal{X}}
\newcommand{\yset}{\mathcal{Y}}
\newcommand{\zset}{\mathcal{Z}}
\newcommand{\eps}{\epsilon}
\renewcommand{\O}[1]{O\left( #1 \right)}
\newcommand{\Otil}[1]{\widetilde{O}( #1 )}
\newcommand{\Otilb}[1]{\widetilde{O}\left( #1 \right)}
\newcommand{\innermid}{\nonscript\;\delimsize\vert\nonscript\;}
\newcommand{\activatebar}{%
	\begingroup\lccode`\~=`\|
	\lowercase{\endgroup\let~}\innermid 
	\mathcode`|=\string"8000
}
\newcommand{\inner}[2]{\left<#1,#2\right>}
\newcommand{\inners}[2]{\big<#1,#2\big>}
\NewDocumentCommand{\prox}{ m m 
O{\alpha}}{\mathrm{Prox}_{#1}^{#3}(#2)}
\newcommand{\gapu}{\textrm{gap}}
\newcommand{\nA}{\norm{A}_{2\rightarrow\infty} }
\newcommand{\ball}{\mathbb{B}}
\newcommand{\trunc}[1][\tau]{\mathsf{T}_{#1}}
\newcommand{\x}{^\mathsf{x}}
\newcommand{\y}{^\mathsf{y}}
\newcommand{\Ex}[1][\xi]{\E}
\title{Variance Reduction for Matrix Games}
\author{Yair Carmon ~~~ Yujia Jin ~~~  Aaron Sidford ~~~ Kevin 
Tian\\
	\texttt{\{\href{mailto:yairc@stanford.edu}{yairc},%
		\href{mailto:yujiajin@stanford.edu}{yujiajin},%
		\href{mailto:sidford@stanford.edu}{sidford},%
		\href{mailto:kjtian@stanford.edu}{kjtian}\}@stanford.edu}}
\date{}
\author{%
	Yair Carmon, Yujia Jin, Aaron Sidford and Kevin Tian\\
	Stanford University\\
	\texttt{\{yairc,yujiajin,sidford,kjtian\}@stanford.edu}\\
}
\begin{document}

\maketitle

\begin{abstract}%
We present a randomized primal-dual algorithm that solves the problem 
$\min_{x} \max_{y} y^\top A x$ to additive error $\epsilon$ in time 
$\nnz(A) + \sqrt{\nnz(A)n}/\epsilon$, for matrix $A$ with larger dimension 
$n$ and $\nnz(A)$ nonzero entries. This improves the best known exact 
gradient methods by a factor of $\sqrt{\nnz(A)/n}$ and is faster than fully 
stochastic gradient methods in the accurate and/or sparse regime 
$\epsilon \le \sqrt{n/\nnz(A)}$. Our results hold for $x,y$ in the simplex 
(matrix games, linear programming) and for $x$ in an $\ell_2$ ball and $y$ 
in the simplex (perceptron / SVM, minimum enclosing ball). Our algorithm 
combines Nemirovski's ``conceptual prox-method'' and a novel 
reduced-variance gradient estimator based on ``sampling from the 
difference'' between the current iterate and a reference point.
\end{abstract}

\section{Introduction}
\label{sec:intro}

Minimax problems---or games---of the form $\min_x \max_y f(x,y)$ are 
ubiquitous in economics, statistics, optimization and machine learning. In 
recent years, minimax formulations for neural network training rose to 
prominence~\citep{GoodfellowPMXWOCB14,MadryMSTV18}, leading to 
intense interest in algorithms for solving large scale minimax games 
\citep{DroriST15,GidelJL16,KolossoskiM17,DaskalakisISZ18,JiinNJ19, 
MertikopoulosZLFC19}. However, the algorithmic toolbox for minimax 
optimization is not as complete as the one for minimization. Variance 
reduction, a technique for improving stochastic gradient estimators by 
introducing control variates, stands as a case in point. A multitude of 
variance reduction schemes exist for finite-sum minimization 
\citep[cf.][]{Johnson13, SchmidtRB17, Zhu17, BottouCN18, FangLLZ18}, and 
their impact on complexity is well-understood~\citep{WoodworthS16}. In 
contrast, only a few works apply variance reduction to finite-sum minimax 
problems~\citep{BalamuruganB16,ShiZY17, 
ChavdarovaGFL19,MishchenkoKSRM19}, and the potential gains from 
variance reduction are not well-understood.

We take a step towards closing this gap by designing variance-reduced 
minimax game solvers that offer strict runtime improvements over 
non-stochastic gradient methods, similar to that of optimal variance 
reduction methods for finite-sum minimization. To achieve this, we focus 
on the fundamental class of bilinear minimax games, 
\begin{equation*}
\min_{x \in \xset} \max_{y \in \yset} {y^\top A x}, ~~\mbox{where}~~A \in 
\R^{m \times n}.
\end{equation*}
In particular, we study the complexity of finding an $\epsilon$-approximate saddle point (Nash equilibrium), namely $x, y$ with
\begin{equation*}
\max_{y' \in \yset}{ (y')^\top A x} - \min_{x' \in \xset} {y^\top 
A x' } \le \epsilon.
\end{equation*}
In the setting where $\xset $ and $\yset$ are both probability simplices, 
the problem corresponds to finding an approximate (mixed) equilbrium in a 
matrix game, a central object in game theory and economics. Matrix games 
are also fundamental to algorithm design due in part to their equivalence to 
linear programming \citep{Dantzig53}. Alternatively, when $\xset$ is an 
$\ell_2$ ball and $\yset$ is a simplex,
solving the corresponding problem finds a maximum-margin linear 
classifier (hard-margin SVM), a fundamental task in machine learning and 
statistics \citep{MinskyP87}.
 We refer to the former as an $\ell_1$-$\ell_1$ game and the latter as an 
 $\ell_2$-$\ell_1$ game; our primary focus is to give improved algorithms 
 for these domains.

\subsection{Our Approach}

Our starting point is Nemirovski's ``conceptual 
prox-method''~\citep{Nemirovski04} for solving $\min_{x\in\xset} 
\max_{y\in\yset} f(x,y)$, 
where $f:\xset\times\yset\to \R$ is convex in $x$ and concave in $y$. The 
method solves a 
sequence of subproblems 
parameterized by $\alpha>0$, each of the form 
\begin{equation}\label{eq:relaxed-proxproblem}
\text{find $x,y$ s.t.\ $\forall x',y'$ }
\inner{\grad_x f(x,y)}{x-x'} - \inner{\grad_y f(x,y)}{y-y'} \le 
\alpha V_{x_0}(x') + \alpha V_{y_0}(y')
\end{equation}
for some $(x_0,y_0)\in\xset\times \yset$, where $V_a(b)$ is a 
norm-suitable Bregman divergence from $a$ to $b$: squared Euclidean 
distance for $\ell_2$ and KL divergence for $\ell_1$. Combining each 
subproblem solution with an extragradient step, the prox-method solves 
the 
original problem to $\epsilon$ accuracy by solving 
$\Otil{\alpha/\epsilon}$ subproblems.\footnote{ 
	More precisely, the required number of subproblem solutions is at most 
	$\Theta 
	\cdot \frac{\alpha}{\epsilon}$, where $\Theta$ is a ``domain size'' 
	parameter that 
	depends on $\xset$, $\yset$, and the Bregman divergence $V$ 
	(see~\Cref{sec:preliminaries}). In the 
	 $\ell_1$ and $\ell_2$ settings considered in this paper, we have the 
	 bound 
	$\Theta \le \log(nm)$ and we use the $\wt{O}$ 
	notation to suppress terms logarithmic in $n$ and $m$.
	However, in other settings---e.g., $\ell_\infty$-$\ell_1$ 
	games~\cite[cf.][]{Sherman17,SidfordTi18}---making the  parameter 
	$\Theta$ scale logarithmically with the problem dimension is far 
	more difficult.
} 
(Solving~\eqref{eq:relaxed-proxproblem} with $\alpha=0$ is equivalent to 
to solving $\min_{x\in\xset} \max_{y\in\yset} f(x,y)$.)

Our first contribution is showing that if a stochastic unbiased gradient 
estimator $\tilde{g}$ satisfies the ``variance'' bound
\begin{equation}\label{eq:var-bound}
\E \norm{\tilde{g}(x,y) - \grad f(x_0,y_0)}_*^2 \le L^2 
\norm{x-x_0}^2 
+ L^2 \norm{y-y_0}^2
\end{equation}
for some $L>0$, then $O(L^2 / \alpha^2)$ regularized stochastic 
mirror descent steps using $\tilde{g}$ 
solve~\eqref{eq:relaxed-proxproblem} in a suitable probabilistic sense. We 
call unbiased gradient estimators that satisfy~\eqref{eq:var-bound} 
``centered.''

Our second contribution is the construction of ``centered'' gradient 
estimators for $\ell_1$-$\ell_1$ and 
$\ell_2$-$\ell_1$ bilinear games, where $f(x,y)=y^\top A x$.
Our $\ell_1$ estimator has the following form. Suppose we wish to 
estimate $g\x = A^\top y$ (the gradient of $f$ w.r.t.\ $x$), and we already 
have $g_0\x = A^\top y_0$. Let $p\in\Delta^m$ be some distribution over 
$\{1,\ldots,m\}$, draw $i\sim p$ and set 
\[
\tilde{g}\x = g_0\x + A_{i:}\frac{[y]_i - [y_0]_i}{p_i},
\] 
where $\ai$ is the $i$th column of $A^\top$. This form 
is familiar from variance reduction 
techniques~\citep{Johnson13,XiaoZ14,Zhu17}, that typically use a fixed 
distribution 
$p$. In our setting, however, a fixed $p$ will not produce sufficiently low 
variance. Departing from prior variance-reduction work and building 
on~\cite{,GrigoriadisK95,ClarksonHW10}, we choose $p$ \emph{based on 
$y$} according 
to 
\[
p_i(y) = \frac{\big|[y]_i - [y_0]_i\big|}{\lone{y-y_0}},
\]
yielding exactly the variance bound we require. We call this technique 
``sampling from the difference.''

For our $\ell_2$ gradient estimator, we sample from the \emph{squared} 
difference, drawing $\xset$-block coordinate $j\sim q$, where
\arxiv{
\[
q_j(x) = \frac{([x]_j - [x_0]_j)^2}{\ltwo{x-x_0}^2}.
\]
}
\notarxiv{
$q_j(x) = {([x]_j - [x_0]_j)^2}/{\ltwo{x-x_0}^2}$.
}
To strengthen our results for $\ell_2$-$\ell_1$ games, we consider a 
refined version of the ``centered'' criterion~\eqref{eq:var-bound} which 
allows regret analysis using local 
norms~\cite{Shalev-Shwartz12,ClarksonHW10}. To further facilitate this 
analysis we follow~\cite{ClarksonHW10} and introduce gradient clipping. 
We extend our proofs to show that stochastic regularized mirror descent 
can solve~\eqref{eq:relaxed-proxproblem} despite the (distance-bounded) 
bias caused by gradient clipping.

Our gradient estimators attain the 
bound~\eqref{eq:var-bound} with $L$ equal to the Lipschitz constant of 
$\grad f$. Specifically, 
\begin{equation}\label{eq:L-vals}
L= \begin{cases}
\max_{ij}|A_{ij}| & \text{in the $\ell_1$-$\ell_1$ setup}\\
\max_{i} \ltwo{\ai} & \text{in the $\ell_2$-$\ell_1$ setup.}\\
\end{cases}
\end{equation}

\subsection{Method complexity compared with prior art}

As per the discussion above, to achieve accuracy $\epsilon$ our algorithm 
solves $\Otil{\alpha/\epsilon}$ subproblems. Each subproblem takes 
$O(\nnz(A))$ time for computing two exact gradients (one for variance 
reduction and one for an extragradient step), plus an additional $(m + 
n)L^2/\alpha^2$ time for the inner mirror descent iterations, with $L$ as 
in~\eqref{eq:L-vals}. The total 
runtime is therefore
\begin{equation*}
\Otilb{\bigg(\nnz(A) + \frac{(m + n)L^2}{\alpha^2} 
\bigg)\frac{\alpha}{\epsilon}}.
\end{equation*}
By setting $\alpha$ optimally to be $\max\{\epsilon, L\sqrt{(m + n)/ 
\nnz(A)}\}$, we obtain the runtime
\begin{equation}\label{eq:our-time}
\Otil{\nnz(A) + \sqrt{\nnz(A)\cdot(m + n)} \cdot L \cdot \epsilon^{-1}}.
\end{equation}

\paragraph{Comparison with mirror-prox and dual extrapolation.}
\citet{Nemirovski04} instantiates his conceptual prox-method by solving 
the relaxed proximal problem~\eqref{eq:relaxed-proxproblem} with 
$\alpha=L$ in time $O(\nnz(A))$, where $L$ is the Lipschitz constant of 
$\grad f$, as given in~\eqref{eq:L-vals}. The 
total complexity of the resulting method is therefore
\begin{equation}\label{eq:extragrad-time}
\Otil{\nnz(A) \cdot L \cdot \eps^{-1}}.
\end{equation}
The closely related dual extrapolation method of~\citet{Nesterov07} attains 
the same rate of convergence. We refer to the running 
time~\eqref{eq:extragrad-time} as \emph{linear} since it scales linearly with 
the problem description size $\nnz(A)$. Our running time 
guarantee~\eqref{eq:our-time} is never worse 
than~\eqref{eq:extragrad-time} by more than a constant factor, and 
improves 
on~\eqref{eq:extragrad-time}  
when $\nnz(A)=\omega(n+m)$, i.e.\ whenever $A$ is not extremely sparse. 
In that regime, our method uses $\alpha \ll L$, hence solving a harder 
version of~\eqref{eq:relaxed-proxproblem} than possible for mirror-prox.

\paragraph{Comparison with sublinear-time methods}
Using a randomized algorithm, \citet{GrigoriadisK95} solve 
$\ell_1$-$\ell_1$ bilinear games in time
\begin{equation}\label{eq:sublinear-time}
\Otil{(m + n) \cdot L^2 \cdot \eps^{-2}},
\end{equation}
and~\citet{ClarksonHW10} extend this result to $\ell_2$-$\ell_1$ bilinear 
games, with the values of $L$ as in~\eqref{eq:L-vals}. Since these runtimes 
scale with $n+m\le \nnz(A)$, we refer to them as \emph{sublinear}. Our
guarantee improves on the guarantee~\eqref{eq:sublinear-time} when 
$(m + n) \cdot L^2 \cdot \eps^{-2} \gg \nnz(A)$, i.e.\ 
whenever~\eqref{eq:sublinear-time} is not truly sublinear. %

\arxiv{\paragraph*{}} 
Our method carefully balances linear-time extragradient steps with cheap 
sublinear-time stochastic gradient steps. Consequently, our runtime 
guarantee~\eqref{eq:our-time} inherits strengths from both the linear and 
sublinear runtimes. First, our runtime scales linearly with $L/\epsilon$ 
rather than quadratically, as does the linear 
runtime~\eqref{eq:extragrad-time}. Second, while our runtime is not strictly
sublinear, its component proportional to $L/\epsilon$ is  
$\sqrt{\nnz(A)(n+m)}$, which is sublinear in $\nnz(A)$. 

Overall, our method 
offers the 
best runtime guarantee in the literature in the regime
\begin{equation*}
\frac{\sqrt{\nnz(A)(n+m)}}{\min\{n,m\}^{\omega}} \ll \frac{\epsilon}{L} \ll 
\sqrt{\frac{n+m}{\nnz(A)}},
\end{equation*}
where the lower bound on $\epsilon$ is due to the best known theoretical 
runtimes of interior point methods: $\Otil{\max\{n,m\}^\omega 
\log(L/\epsilon)}$~\cite{CohenLS18} and $\Otil{\nnz(A) + \min\{n,m\}^2) 
\sqrt{\min\{n,m\}} \log(L/\epsilon)}$~\cite{LeeS15}, where $\omega$ is the 
(current) matrix multiplication exponent.

In the square dense case (i.e.\ $\nnz(A)\approx n^2 =m^2$), we improve 
on the  accelerated runtime~\eqref{eq:extragrad-time} by a factor of 
$\sqrt{n}$, the same improvement that optimal variance-reduced 
finite-sum minimization methods achieve over the fast gradient 
method~\cite{XiaoZ14,Zhu17}. %

\arxiv{
\subsection{Additional contributions}\label{ssec:intro-additional}
We extend our development in five ways. First, we show how to combine 
restarting with variance reduction in order to compute exact proximal 
points to high accuracy. This technique applies to any function $f$ with a 
centered gradient estimator (rather than the bilinear functions considered 
so far). Second, we describe an extension of our results to  
``composite'' saddle point problems of the form 
$\min_{x\in\xset}\max_{y\in\yset} \left\{f(x,y) + \phi(x) - \psi(y)\right\}$, 
where $f$ admits a centered gradient estimator and $\phi,\psi$ are 
``simple'' convex functions.
Third, we adapt our framework to achieve linear convergence when $\phi$ 
and $\psi$ are relatively strongly convex in the sense of~\cite{LuFN18}. 
Fourth, we show that for finite-sum saddle point problems the ``SVRG'' 
gradient estimator~\cite{Johnson13} is centered, and derive corresponding 
runtimes guarantees.
 Fifth, we describe a number of alternative 
centered gradient estimators for the $\ell_2$-$\ell_1$ and 
$\ell_2$-$\ell_2$ setups.
}

\subsection{Related work}\label{ssec:related}

Matrix games, the canonical form of discrete zero-sum games, have long 
been studied in economics \cite{VonNeumann28}. The 
classical mirror descent (i.e.\ no-regret) method yields an algorithm with 
running time $\Otil{\nnz(A)L^2\eps^{-2}}$~\citep{NemirovskyY83}. 
Subsequent work~\cite{GrigoriadisK95, Nemirovski04, Nesterov07, 
ClarksonHW10} improve this runtime as described above. Our work builds 
on the extragradient scheme of \citet{Nemirovski04} as well as the gradient 
estimation and clipping technique of~\citet{ClarksonHW10}.

\citet{BalamuruganB16} apply standard variance reduction 
\citep{Johnson13} to bilinear $\ell_2$-$\ell_2$ games by sampling 
elements proportional to squared matrix entries. Using 
proximal-point acceleration they obtain a runtime of 
$\Otil{\nnz(A)+\|A\|_\mathrm{F} 
\sqrt{\nnz(A)\max\{m,n\}}\eps^{-1}\log\frac{1}{\eps}}$, a rate we recover 
 using our algorithm (\Cref{ssec:l2l2}). However, in this 
setting the mirror-prox method has runtime 
$\Otil{\opnorm{A}\nnz(A)\eps^{-1}}$, which may be better than the result 
of 
\cite{BalamuruganB16} by a factor of $\sqrt{mn/\nnz(A)}$ due to the  
discrepancy in the norm of $A$.
Naive application of \cite{BalamuruganB16} to $\ell_1$ domains results in 
even greater potential losses. \citet{ShiZY17} extend the method 
of~\cite{BalamuruganB16} to smooth functions using general Bregman 
divergences, but their extension is unaccelerated and appears limited to a 
$\eps^{-2}$ rate.

\citet{ChavdarovaGFL19} propose a variance-reduced extragradient 
 method with applications to generative adversarial training. In contrast to 
 our algorithm, which performs extragadient steps in the outer loop, the 
 method of \cite{ChavdarovaGFL19} performs stochastic extragradient steps 
 in the inner loop, using finite-sum variance reduction as 
 in~\cite{Johnson13}. \citet{ChavdarovaGFL19} analyze their method in 
 the  convex-concave setting, showing improved stability over direct 
 application of the extragradient method to noisy gradients. However, their 
 complexity guarantees are worse than those of linear-time methods. 
 Following up on~\citep{ChavdarovaGFL19}, \citet{MishchenkoKSRM19} 
 propose to reduce the variance of the stochastic extragradient method by 
 using the same stochastic sample for both the gradient and extragradient 
 steps. In 
 the Euclidean strongly convex case, they show a convergence guarantee 
 with a relaxed variance assumption, and in the noiseless full-rank bilinear 
 case they recover the guarantees of~\citep{MokhtariOP19}. In the general 
 convex
 case, however, they only show an $\epsilon^{-2}$ rate of convergence.
 
 \arxiv{
 \subsection{Paper outline} We define our notation in 
 Section~\ref{sec:preliminaries}. In Section~\ref{ssec:outerloop}, we review 
 Nemirovski's conceptual prox-method and introduce the notion of a 
 relaxed proximal oracle; we implement such oracle using variance-reduced 
 gradient estimators in Section~\ref{ssec:alphaprox}. In 
 Section~\ref{sec:apps}, we construct these gradient estimators for the 
 $\ell_1$-$\ell_1$, $\ell_2$-$\ell_1$ as well as $\ell_2$-$\ell_2$ domain 
 settings, and complete the analyses of the corresponding algorithms. 
 Finally, in Section~\ref{sec:ext} we give our additional contributions 
 described in Section~\ref{ssec:intro-additional} above.
}

\notarxiv{
	 \subsection{Paper outline and additional contributions} 
	 We define our notation in 
	Section~\ref{sec:preliminaries}. In Section~\ref{ssec:outerloop}, we 
	review 
	Nemirovski's conceptual prox-method and introduce the notion of a 
	relaxed proximal oracle; we implement such oracle using 
	variance-reduced 
	gradient estimators in Section~\ref{ssec:alphaprox}. In 
	Section~\ref{sec:apps}, we construct these gradient estimators for the 
	$\ell_1$-$\ell_1$ and $\ell_2$-$\ell_1$ domain 
	settings, and complete the analyses of the corresponding algorithms; 
	in \Cref{ssec:l2l2} we provide analogous treatment for the 
	$\ell_2$-$\ell_2$  setting, recovering the results 
	of~\citep{BalamuruganB16}.
	
	In~\Cref{sec:ext} we provide three additional contributions:  
	variance-reduction-based computation of proximal points for arbitrary 
	convex-concave functions (\Cref{ssec:highprecision}); extension of our 
	results to  
	``composite'' saddle point problems of the form 
	$\min_{x\in\xset}\max_{y\in\yset} \left\{f(x,y) + \phi(x) - 
	\psi(y)\right\}$, 
	where $f$ admits a centered gradient estimator and $\phi,\psi$ are 
	``simple'' convex functions (\Cref{ssec:composite}); and a number of 
	alternative centered gradient estimators for the $\ell_2$-$\ell_1$ and 
	$\ell_2$-$\ell_2$ settings (\Cref{sec:additional}).
} %
\section{Notation}
\label{sec:preliminaries}

\paragraph{Problem setup.} 
A \emph{setup} is the triplet $(\zset, \norm{\cdot}, r)$ where: (i) $\zset$ is 
a compact and convex subset of $\R^n \times \R^m$, (ii) $\norm{\cdot}$ 
is a norm on $\zset$ and (iii) $r$ is 1-strongly-convex w.r.t.\ $\zset$ and 
$\norm{\cdot}$, i.e.\ such that 
$
r(z') \ge r(z) + \inner{\grad r(z)}{z-z'} + \half \norm{z'-z}^2
$
for all $z,z'\in\zset$.\footnote{ For non-differentiable $r$, \arxiv{we 
	define }\notarxiv{let }$\inner{\grad r(z)}{w}\defeq\sup_{\gamma \in 
	\partial r(z)} 
	\inner{\gamma}{w}$, where $\partial r(z)$ is the subdifferential of $r$ at 
	$z$.}
We call $r$ the \emph{distance generating function} 
and denote the \emph{Bregman divergence} associated with it by
\begin{equation*}
V_{z}(z') \defeq r(z') - r(z) - \inner{\grad r(z)}{z'-z} \ge \half \norm{z'-z}^2.
\end{equation*}
We also 
denote $\Theta \defeq \max_{z'} r(z') - \min_{z} r(z)$ and assume it is finite.

\paragraph{Norms and dual norms.}
We write $\mathcal{S}^*$ for the set of linear functions on $\mathcal{S}$. 
For $\zeta \in \zset^*$ we define the dual norm of $\norm{\cdot}$ as  
$\norm{\zeta}_* \defeq \max_{\norm{z}\le 1} \inner{\zeta}{z}$. For 
$p\ge 1$ we write the $\ell_p$ norm $\norm{z}_p = (\sum_i z_i^p)^{1/p}$ 
with $\linf{z} = \max_i |z_i|$. The dual norm of $\ell_p$ is $\ell_q$ 
with $q^{-1} = 1-p^{-1}$. 

\paragraph{Domain components.}
We assume $\zset$ is of the form $\xset\times\yset$ for convex and 
compact sets $\xset\subset \R^n$ and $\yset\subset\R^m$. Particular 
sets of interest are the simplex $\Delta^d = \{v\in\R^d \mid \lone{v}=1, 
v \ge 0 \}$
 and the Euclidean ball $\ball^d = \{v\in\R^d \mid \ltwo{v} \le 1\}$.
For any vector in $z\in \R^n\times \R^m$, 
\begin{equation*}
\text{we write $z\x$ and $z\y$ for 
the first $n$ and last $m$ coordinates of $z$, respectively.} 
\end{equation*}
When totally clear from context, we sometimes refer to the $\xset$ and 
$\yset$ components of $z$ directly as $x$ and $y$. We write the $i$th 
coordinate of vector $v$ as $[v]_i$.

\paragraph{Matrices.}
We consider a matrix $A\in\R^{m\times n}$ and write $\nnz(A)$ for the 
number of its nonzero entries. For $i\in[n]$ and $j\in[m]$ we write 
$\ai$, $\aj$ and $A_{ij}$ for the corresponding row, column and entry, 
respectively.\footnote{ For $k\in\N$, we let $[k]\defeq \{1,\ldots,k\}$.} We 
consider the matrix norms $\norm{A}_{\max} \defeq  
\max_{ij}|A_{ij}|$, $\norm{A}_{p \rightarrow q}\defeq \max_{\norm{x}_p \le  
1} \norm{Ax}_q$ and $\lfro{A} \defeq (\sum_{i, j} A_{ij}^2)^{1/2}$.

\arxiv{\newpage}

\section{Primal-dual variance reduction framework}
\label{sec:framework}

\newcommand{\gap}{\mathrm{Gap}}
\newcommand{\veps}{\varepsilon}

In this section, we establish a framework for solving the saddle point 
problem 
\begin{equation}\label{eq:problem-gen}
\min_{x\in\xset}\max_{y\in\yset} f(x,y),
\end{equation}
where $f$ is convex in $x$ and concave $y$, and admits a 
(variance-reduced) stochastic estimator for the continuous and 
monotone\footnote{ A mapping $q:\zset\to\zset^*$ is monotone if and 
only if $\inner{q(z')-q(z)}{z'-z}\ge0$ for all $z,z'\in\zset$; $g$ is  
monotone due to convexity-concavity of $f$.} gradient mapping
\[
g(z) = g(x,y) \defeq \left(\nabla_{x} f(x,y),-\nabla_{y} f(x,y)\right).
\]
Our goal is to find an $\epsilon$-approximate saddle point (Nash 
equilibrium), i.e.\ 
$z\in\zset\defeq \xset\times \yset$ such that
\begin{equation}\label{eq:gap-def}
\gap(z) \defeq \max_{y' \in \yset} f(z\x,y') - \min_{x' \in \xset} f(x', z\y) 
\le \epsilon.
\end{equation}
We achieve this by generating a sequence $z_1,z_2,\ldots,z_k$ such that 
$\frac{1}{K}\sum_{k=1}^K \inner{g(z_k)}{z_k-u}\le \epsilon$ for every 
$u\in\zset$ and using the fact that
\begin{equation}\label{eq:gap-bound}
\gap\left(\frac{1}{K}\sum_{k=1}^K z_k\right)\le \max_{u\in \zset} 
\frac{1}{K}\sum_{k=1}^K \inner{g(z_k)}{z_k-u}
\end{equation}
due to convexity-concavity of $f$ (see proof in 
Appendix~\ref{app:gap-bound}).

In Section~\ref{ssec:outerloop} we define the notion of a (randomized) 
\emph{relaxed proximal oracle}, and describe how Nemirovski's 
mirror-prox method leverages it to solve the 
problem~\eqref{eq:problem-gen}. In Section~\ref{ssec:alphaprox} we define 
a class of \emph{centered} gradient estimators, whose variance is 
proportional to the squared distance from a reference point. Given such a 
centered gradient estimator, we show that a regularized stochastic mirror 
descent 
scheme constitutes a relaxed proximal oracle. For a technical reason, we  
limit our oracle guarantee in Section~\ref{ssec:alphaprox} to the bilinear 
case $f(x,y)=y^\top A x$, which suffices for the applications in 
Section~\ref{sec:apps}. We lift this limitation in~\Cref{ssec:highprecision}, 
where we show a different oracle 
implementation that is valid  for general convex-concave $f$, with only a 
logarithmic increase in complexity.

\subsection{The mirror-prox method with a randomized oracle}
\label{ssec:outerloop}

Recall that we assume the space $\zset=\xset\times\yset$ is equipped 
with a norm $\norm{\cdot}$ and distance generating function 
$r:\zset\to\R$ that is $1$-strongly convex w.r.t.\ $\norm{\cdot}$ and has 
range $\Theta$. We write the induced Bregman divergence as 
$V_{z}(z')=r(z')-r(z)-\inner{\grad r(z)}{z'-z}$. We use the following fact 
throughout the paper: by definition, the Bregman 
divergence satisfies, for any $z, z', u\in\zset$,
\begin{equation}\label{eq:three-point}
-\inner{\nabla V_z(z')}{z' - u} = V_z(u) - V_{z'}(u) - V_z(z').
\end{equation}
For any $\alpha > 0$ we define 
the $\alpha$-proximal mapping 
$\prox{z}{g}$ to be the solution of the variational 
inequality corresponding to the strongly monotone operator $g+\alpha 
\grad V_{z}$, 
i.e.\ the unique $z_\alpha\in\zset$ such that 
$\inner{g(z_\alpha)+\alpha\nabla V_z(z_\alpha)}{z_\alpha-u} \le 0$ for all 
$u\in\zset$~\citep[cf.][]{Eckstein93}. Equivalently 
(by~\eqref{eq:three-point}), 
\begin{equation}\label{eq:prox-def}
\prox{z}{g}  \defeq \text{the unique $z_\alpha\in\zset$ s.t.}~
\inner{g(z_\alpha)}{z_\alpha-u} \le \alpha V_z(u)-\alpha 
V_{z_\alpha}(u)-\alpha 
V_z(z_\alpha)~~\forall 
u\in\zset.
\end{equation}
When 
$V_{z}(z')=V_{x}\x(x')+V_{y}\y(y')$, $\prox{z}{g}$ is also the unique 
solution of the saddle point problem 
\begin{equation*}
\min_{x'\in\xset}\max_{y'\in\yset}\left\{f(x',y')+\alpha V_{x}\x(x') - 
\alpha 
V_{y}\y(y')\right\}. 
\end{equation*}

Consider iterations of the form $z_{k} = \prox{z_{k-1}}{g}$, with 
$z_0=\argmin_z r(z)$.  Averaging the definition~\eqref{eq:prox-def} over 
$k$, using the bound~\eqref{eq:gap-bound} and the nonnegativity of 
Bregman divergences gives
\begin{equation*}
\gap\left(\frac{1}{K}\sum_{k=1}^K z_k\right)\le 
\max_{u\in \zset} \frac{1}{K}\sum_{k=1}^K \inner{g(z_k)}{z_k-u}
\le \max_{u\in \zset} \frac{\alpha 
\left(V_{z_0}(u)-V_{z_K}(u)\right)}{K} \le 
\frac{\alpha\Theta}{K}.
\end{equation*}
Thus, we can find an $\epsilon$-suboptimal point in 
$K=\alpha\Theta/\epsilon$ exact proximal steps. However, computing 
$\prox{z}{g}$ exactly may be as difficult as solving the original 
problem. 
\citet{Nemirovski04} proposes a relaxation of the exact proximal mapping, 
which we slightly extend to include the possibility of randomization, and 
formalize in the following.

\begin{definition}[($\alpha,\veps$)-relaxed proximal oracle]
\label{def:alphaprox}
Let $g$ be a monotone operator 
and $\alpha,\veps>0$. 
An \emph{($\alpha,\veps$)-relaxed proximal oracle} for $g$ is a 
(possibly randomized) mapping $\mc{O}:\zset\to\zset$ such that 
$z'=\mc{O}(z)$ satisfies
\begin{equation*}
\E\left[ \max_{u \in \zset}\big\{\inner{g(z')}{z' - u} - \alpha 
V_z(u)\big\}\right] \leq \veps.
\end{equation*}
\end{definition}

Note that $\mc{O}(z)=\prox{z}{g}$ is an $(\alpha,0)$-relaxed proximal 
oracle. Algorithm~\ref{alg:outerloop} describes the ``conceptual 
prox-method'' of~\citet{Nemirovski04}, which recovers the error guarantee 
of exact proximal iterations. The $k$th iteration consists of (i) a relaxed 
proximal oracle call producing $z_{k-1/2} = \mc{O}(z_{k-1})$, and (ii) a 
\emph{linearized} proximal (mirror) step where we replace $z\mapsto g(z)$ 
with the constant function $z\mapsto g(z_{k-1/2})$, producing $z_{k} =
\prox{z_{k-1}}{g(z_{k-1/2})}$. 
\arxiv{%
 We now state and prove the convergence guarantee for the prox-method, 
first shown in~\citep{Nemirovski04}.
}%
\notarxiv{
	We now state the convergence guarantee for the mirror-prox 
	method, first shown in~\citep{Nemirovski04} (see~\Cref{app:outerloop} 
	for a simple proof).
}

\SetKwInOut{Parameter}{Parameters}
\SetKwComment{Comment}{$\triangleright$\ }{}
\SetCommentSty{color{black}}

\begin{algorithm}
	\label{alg:outerloop}
	\DontPrintSemicolon
	\KwInput{$(\alpha,\veps)$-relaxed proximal oracle $\mathcal{O}(z)$ for 
	gradient mapping $g$, distance-generating $r$}
	\Parameter{Number of iterations $K$}
	\KwOutput{Point $\bar{z}_K$ with $\E\,\gap(\bar{z}) \le 
	\frac{\alpha\Theta}{K} + 
	\veps$}
	$z_0 \gets \argmin_{z\in\zset} r(z)$ \;
	\For{$k = 1, \ldots, K$}
	{
		$z_{k-1/2} \leftarrow \mathcal{O}(z_{k - 1})$ \Comment*[f]{We 
		implement $\mathcal{O}(z_{k-1})$ by calling $\InnerLoop(z_{k-1}, 
		\tilde{g}_{z_{k - 1}}, \alpha)$} \; 
		$z_k \leftarrow \prox{z_{k-1}}{g(z_{k-1/2})}
		= \argmin_{z \in \zset}\left\{ 
		\inner{g\left(z_{k-1/2}\right)}{z} + 
		\alpha V_{z_{k-1}}(z)\right\}$\; \label{line:outer-extragrad}
	}
	\Return $\bar{z}_K = \frac{1}{K}\sum_{k=1}^K z_{k-1/2}$
	\caption{$\OuterLoop(\mathcal{O})$ (\citet{Nemirovski04})}
\end{algorithm}

\begin{restatable}[Prox-method convergence via 
oracles]{proposition}{restateOuterLoop}
\label{prop:outerloopproof}
Let $\mathcal{O}$ be an ($\alpha$,$\veps$)-relaxed proximal oracle with 
respect to gradient mapping $g$ and distance-generating function $r$ with 
range at most $\Theta$. %
Let $z_{1/2}, z_{3/2}, \ldots, z_{K-1/2}$ be the iterates of 
Algorithm~\ref{alg:outerloop} and let $\bar{z}_K$ be its output. Then
\begin{equation*}
\E\,\gap(\bar{z}_K)\le 
\E \max_{u\in \zset} \frac{1}{K}\sum_{k=1}^K 
\inner{g(z_{k-1/2})}{z_{k-1/2}-u}
\le 
\frac{\alpha\Theta}{K} + \veps.
\end{equation*}
\end{restatable}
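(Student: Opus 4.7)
The plan is to reduce the gap to a regret-type quantity using the convexity-concavity bound~\eqref{eq:gap-bound} applied to the iterate sequence $z_{1/2},\ldots,z_{K-1/2}$, then control that quantity by combining the linearized mirror step with the oracle guarantee in Definition~\ref{def:alphaprox}. The first inequality in the proposition is exactly~\eqref{eq:gap-bound} after taking expectations, so the entire task is to prove the second inequality.

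For the main bound, the idea is to split, for each $k$ and every $u\in\zset$,
\[
\inner{g(z_{k-1/2})}{z_{k-1/2}-u}
= \underbrace{\inner{g(z_{k-1/2})}{z_{k-1/2}-z_k}}_{\text{``extragradient gap''}}
+ \inner{g(z_{k-1/2})}{z_k - u}.
\]
The second term is handled by the definition of the linearized prox step on Line~\ref{line:outer-extragrad}: since $z_k = \prox{z_{k-1}}{g(z_{k-1/2})}$, the characterization~\eqref{eq:prox-def} (applied with the constant operator $z\mapsto g(z_{k-1/2})$) yields
\[
\inner{g(z_{k-1/2})}{z_k-u}\le \alpha V_{z_{k-1}}(u) - \alpha V_{z_k}(u) - \alpha V_{z_{k-1}}(z_k).
\]
Rearranging the resulting inequality puts the left-hand side in the form
\[
\inner{g(z_{k-1/2})}{z_{k-1/2}-u}
\le \Bigl[\inner{g(z_{k-1/2})}{z_{k-1/2}-z_k} - \alpha V_{z_{k-1}}(z_k)\Bigr] + \alpha V_{z_{k-1}}(u) - \alpha V_{z_k}(u).
\]
The bracketed quantity is at most $\max_{u'\in\zset}\{\inner{g(z_{k-1/2})}{z_{k-1/2}-u'} - \alpha V_{z_{k-1}}(u')\}$ (by plugging in $u'=z_k\in\zset$), which is precisely what the $(\alpha,\veps)$-relaxed oracle guarantee (with $z=z_{k-1}$, $z'=z_{k-1/2}$) bounds by $\veps$ in conditional expectation given $z_{k-1}$.

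I then sum these inequalities over $k=1,\ldots,K$. The $\alpha V_{z_{k-1}}(u) - \alpha V_{z_k}(u)$ terms telescope to $\alpha V_{z_0}(u) - \alpha V_{z_K}(u)\le \alpha V_{z_0}(u)$ since Bregman divergences are nonnegative. Taking maximum over $u\in\zset$ and using the subadditivity of $\max$ gives
\[
\max_{u\in\zset}\sum_{k=1}^K \inner{g(z_{k-1/2})}{z_{k-1/2}-u}
\le \sum_{k=1}^K \max_{u'\in\zset}\!\Bigl\{\inner{g(z_{k-1/2})}{z_{k-1/2}-u'}-\alpha V_{z_{k-1}}(u')\Bigr\}
+ \alpha \max_{u\in\zset} V_{z_0}(u).
\]
Since $z_0=\argmin_{z\in\zset} r(z)$, first-order optimality gives $\inner{\grad r(z_0)}{u-z_0}\ge 0$, hence $V_{z_0}(u)\le r(u)-r(z_0)\le \Theta$. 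Finally, taking expectations, the oracle guarantee applied term-by-term (via the tower property, conditioning on $z_{k-1}$) bounds each summand by $\veps$, yielding the claimed $\alpha\Theta/K + \veps$ after dividing by $K$.

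The only subtle step is the commutation of $\max$ and $\E$ when invoking the oracle: we need to push the expectation inside the sum \emph{before} bounding each term, which is why the telescoping of the $V$-differences (which no longer depends on $k$ after summing) must be done before taking expectations. Once that ordering is respected, every other step is a routine application of~\eqref{eq:three-point}, \eqref{eq:prox-def}, and Definition~\ref{def:alphaprox}.
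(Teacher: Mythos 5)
Your proof is correct and follows essentially the same route as the paper: both use the characterization~\eqref{eq:prox-def} for the linearized extragradient step, split $\inner{g(z_{k-1/2})}{z_{k-1/2}-u}$ into the ``extragradient gap'' and a telescoping Bregman term, take the max over $u$ only after isolating the $u$-independent bracket, and then bound each bracket's expectation by $\veps$ via Definition~\ref{def:alphaprox} (the paper plugs in $u'=z_k$ implicitly; you make the max over $u'$ explicit first, which is equivalent). The concluding remark about the order of $\max$ and $\E$ correctly identifies the one place where care is needed, and your handling of it matches the paper's.
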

\newcommand{\outloopproof}{
\begin{proof}
Fix iteration $k$, and note that by the definition~\eqref{eq:prox-def}, 
$z_k=\prox{z_{k-1}}{g(z_{k-1/2})}$ satisfies
\begin{equation*}
\inner{g(z_{k-1/2})}{z_{k} - u} \leq \alpha\left(V_{z_{k - 1}}(u) - V_{z_k}(u) 
- 
V_{z_{k - 1}}(z_k)\right)~~\forall u\in\zset.
\end{equation*}
Summing over $k$, writing \arxiv{$\inner{g(z_{k-1/2})}{z_{k} - u} = 
\inner{g(z_{k-1/2})}{z_{k-1/2} - u} 
- \inner{g(z_{k-1/2})}{z_{k-1/2} - z_{k}}$}
\notarxiv{
\begin{equation*}
\inner{g(z_{k-1/2})}{z_{k} - u} = 
\inner{g(z_{k-1/2})}{z_{k-1/2} - u} 
- \inner{g(z_{k-1/2})}{z_{k-1/2} - z_{k}}
\end{equation*}
} and rearranging yields
\begin{equation*}
\sum_{k=1}^K\inner{g(z_{k-1/2})}{z_{k-1/2} - u} \leq \alpha V_{z_0}(u)  +
 \sum_{k=1}^K \left[ \inner{g(z_{k-1/2})}{z_{k-1/2} - z_{k}}- \alpha 
V_{z_{k - 1}}(z_k)\right]
\end{equation*}
for all $u\in\zset$. Note that since $z_0$ minimizes $r$, $V_{z_0}(u) = 
r(u)-r(z_0) \le \Theta$ for all $u$. Therefore, maximizing the above display 
over $u$ and afterwards taking expectation gives
\begin{equation*}
\E\max_{u\in\zset}\sum_{k=1}^K\inner{g(z_{k-1/2})}{z_{k-1/2} - u} \leq 
\alpha\Theta  +
\sum_{k=1}^K \E \left[ \inner{g(z_{k-1/2})}{z_{k-1/2} - z_{k}}- \alpha 
V_{z_{k - 1}}(z_k)\right].
\end{equation*}
Finally, by Definition~\ref{def:alphaprox}, $\E \left[ 
\inner{g(z_{k-1/2})}{z_{k-1/2} - z_{k}}- \alpha 
V_{z_{k - 1}}(z_k)\right] \le \veps$ for every $k$, and and the result follows 
by dividing by $K$ and using the bound~\eqref{eq:gap-bound}.
\end{proof}
}
\arxiv{\outloopproof}

\subsection{Implementation of an $(\alpha,0)$-relaxed proximal 
oracle}
\label{ssec:alphaprox}

We now explain how to use stochastic variance-reduced gradient 
estimators to design an efficient ($\alpha,0$)-relaxed proximal oracle.
We begin by introducing the bias and variance properties of the estimators 
we require.

\begin{definition}
	\label{def:w-l-centered}
	Let $z_0\in\zset$ and $L>0$. A stochastic gradient estimator 
	$\tilde{g}_{z_0}:\zset\rightarrow\zset^*$ %
	is called %
	\emph{$(z_0,L)$-centered} for $g$ if for all $z\in\zset$
	\begin{enumerate}
		\item $\Ex{} \left[\tilde{g}_{z_0}(z)\right]  = g(z)$,
		\item $\E \norm{\tilde{g}_{z_0}(z) - g(z_0)}_*^2\le L^2 \norm{z- 
			z_0}^2$.%
	\end{enumerate}
\end{definition}

\begin{example*}
	When the problem~\eqref{eq:problem-gen} has finite-sum structure 
	$f(x,y)=\frac{1}{K}\sum_{k=1}^K f_k(x,y)$ and the gradient mappings 
	$g_k 
	= (\grad_x f_k - 
	\grad_y f_k)$ are $L$-Lipschitz for every $k$, sampling $k$ uniformly 
	from $[K]$ and letting 
	$\tilde{g}_{w_0}(w)=g_k(w)-g_k(w_0)+\frac{1}{K}\sum_{k\in[K]}g_k(w_0)$ 
	gives an $(w_0, L)$-centered gradient estimator; this is the saddle-point 
	version of the well-known reduced 
	variance gradient estimator of~\cite{Johnson13}. See 
	Section~\ref{ssec:finitesum} for more details and resulting 
	runtime guarantees.
\end{example*}

\begin{lemma}\label{lem:centered-var}
	A $(z_0,L)$-centered estimator for $g$ satisfies
	$\E \norm{\tilde{g}_{z_0}(z) - g(z)}_*^2\le (2L)^2 \norm{z- 
		z_0}^2$.
\end{lemma}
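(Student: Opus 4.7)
The plan is to reduce the claim to the second property of a centered estimator via the triangle inequality, combined with Jensen's inequality applied to the unbiasedness condition. Throughout, the only nontrivial subtlety is that $\norm{\cdot}_*$ need not be Hilbertian, so I cannot use a variance-decomposition identity and must pay a factor of two.

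First, I would write the decomposition
\[
\tilde{g}_{z_0}(z) - g(z) = \bigl(\tilde{g}_{z_0}(z) - g(z_0)\bigr) - \bigl(g(z) - g(z_0)\bigr),
\]
and apply the elementary inequality $\norm{a-b}_*^2 \le 2\norm{a}_*^2 + 2\norm{b}_*^2$ (a consequence of the triangle inequality and the AM--GM inequality) to obtain, after taking expectations,
\[
\E \norm{\tilde{g}_{z_0}(z) - g(z)}_*^2 \le 2\,\E\norm{\tilde{g}_{z_0}(z)-g(z_0)}_*^2 + 2\norm{g(z)-g(z_0)}_*^2.
\]

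Next, the first term on the right is bounded by $2L^2\norm{z-z_0}^2$ directly from property~2 of Definition~\ref{def:w-l-centered}. For the second term, I would use property~1 (unbiasedness), which gives $g(z) - g(z_0) = \E[\tilde{g}_{z_0}(z) - g(z_0)]$ since $g(z_0)$ is deterministic. Applying Jensen's inequality to the convex function $\norm{\cdot}_*^2$ then yields
\[
\norm{g(z)-g(z_0)}_*^2 = \bigl\|\E[\tilde{g}_{z_0}(z)-g(z_0)]\bigr\|_*^2 \le \E\norm{\tilde{g}_{z_0}(z)-g(z_0)}_*^2 \le L^2\norm{z-z_0}^2,
\]
again by property~2. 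Summing the two contributions gives $\E\norm{\tilde{g}_{z_0}(z)-g(z)}_*^2 \le 4L^2\norm{z-z_0}^2 = (2L)^2\norm{z-z_0}^2$, as claimed.

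There is no real obstacle; the only thing to watch is the loss of a factor of two relative to the Euclidean case, which is intrinsic to working with general (non-Hilbertian) dual norms and accounts for the $2L$ (rather than $L$) appearing in the statement.
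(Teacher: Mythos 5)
Your proof is correct and matches the paper's argument essentially step-for-step: the paper also sets $\tilde{\delta} = \tilde{g}_{z_0}(z) - g(z_0)$, uses the triangle-plus-AM--GM inequality to get $\E\|\tilde{\delta} - \E\tilde{\delta}\|_*^2 \le 2\E\|\tilde{\delta}\|_*^2 + 2\|\E\tilde{\delta}\|_*^2$, applies Jensen to the second term, and finishes with the variance property.
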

\begin{proof}
	Writing $\tilde{\delta} = \tilde{g}_{z_0}(z) - g(z_0)$, we have $\E 
	\tilde{\delta} = g(z)-g(z_0)$ by the first centered estimator property. 
	Therefore,
	\vspace{-6pt}
	\begin{equation*}
	\E \norm{\tilde{g}_{z_0}(z) - g(z)}_*^2
	=
	\E \norms{\tilde{\delta}- \E\tilde{\delta}}_*^2 
	\stackrel{(i)}{\le}
	2\E \norms{\tilde{\delta}}_*^2 +2\norms{\E\tilde{\delta}}_*^2
	\stackrel{(ii)}{\le}
	4 \E \norms{\tilde{\delta}}_*^2
	\stackrel{(iii)}{\le}
	(2L)^2 \norm{z-z_0}^2,
	\end{equation*} 
	where the bounds follow from $(i)$ the triangle inequality, $(ii)$ Jensen's 
	inequality and $(iii)$ the second centered estimator property.
\end{proof}

\begin{remark}\label{rem:lipschitz}
	A gradient mapping that admits a $(z,L)$-centered gradient estimator for 
	every $z\in\zset$ is $2L$-Lipschitz, since by Jensen's inequality and 
	Lemma~\ref{lem:centered-var} we have for all $w\in\zset$
	\[
	\norm{g(w)-g(z)}_* = \norm{\E \tilde{g}_z(w)-g(z)}_*
	\le  (\E \norm{\tilde{g}_z(w)-g(z)}_*^2)^{1/2} \le 2L\norm{w-z}.
	\]
\end{remark}

\begin{remark}
	Definition~\ref{def:w-l-centered} bounds the gradient variance using the 
	distance to the reference point. Similar bounds are used in variance 
	reduction for bilinear saddle-point problems 
	with Euclidean norm~\cite{BalamuruganB16}, as well as 
	for finding stationary 
	points in smooth nonconvex finite-sum 
	problems~\cite{ZhuH16,ReddiHSPS16,FangLLZ18,ZhouXG18}. However, 
	known variance reduction methods for smooth convex finite-sum 
	minimization require stronger bounds \cite[cf.][Section 2.1]{Zhu17}.
\end{remark}

With the variance bounds defined, we describe 
Algorithm~\ref{alg:innerloop} which (for the bilinear case) implements a 
relaxed proximal oracle. The algorithm is stochastic 
mirror descent with an additional regularization term around the 
initial point $w_0$. Note that we do not perform extragradient steps in this 
stochastic method. When combined with a centered gradient estimator, the 
iterates of Algorithm~\ref{alg:innerloop} provide the following guarantee, 
which is one of our key technical contributions.

\begin{algorithm}
	\label{alg:innerloop}
	\DontPrintSemicolon
	\KwInput{Initial $w_0\in\zset$, gradient estimator $\tilde{g}_{w_0}$, 
	oracle quality $\alpha>0$}
	\Parameter{Step size $\eta$, number of iterations $T$}
	\KwOutput{Point $\bar{w}_T$ satisfying Definition~\ref{def:alphaprox} 
	(for 
	appropriate $\tilde{g}_{w_0}$, $\eta$, $T$)}
	\For{$t = 1, \ldots, T$}
	{
		$w_t \leftarrow \argmin_{w\in\zset}\left\{\inner{\tilde{g}_{w_0}(w_{t - 
		1})}{w} + \frac{\alpha}{2}V_{w_0}(w) + \frac{1}{\eta}V_{w_{t - 1}}(w) 
		\right\}$\; \label{line:inner-step}
	}
	\Return $\bar{w}_T=\frac{1}{T}\sum_{t=1}^T w_t$\; \label{line:average-step}
	\caption{$\InnerLoop(w_0, \tilde{g}_{w_0}, \alpha)$}
\end{algorithm}

\begin{restatable}{proposition}{restateInnerLoop}
\label{prop:innerloopproof}
	Let $\alpha,L>0$, let $w_0\in\zset$ and let $\tilde{g}_{w_0}$ be 
	$(w_0,L)$-centered for monotone $g$. Then, for $\eta = 
	\frac{\alpha}{10L^2}$ and 
	$T \ge  
	\frac{4}{\eta\alpha}=\frac{40L^2}{\alpha^2}$, the iterates of 
	Algorithm~\ref{alg:innerloop} satisfy
	\begin{equation}\label{eq:innerloop-guarantee}
	\Ex{}\max\limits_{u\in\zset}\left[\frac{1}{T} \sum_{t \in [T]} 
	\inner{g(w_t)}{w_t - u} - \alpha V_{w_0}(u)\right]
	\le 0.
	\end{equation}
\end{restatable}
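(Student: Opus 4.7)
The plan is to view Algorithm~\ref{alg:innerloop} as stochastic mirror descent with an additional proximal term anchored at $w_0$, and to argue that the extra anchor plus the variance-control provided by the $(w_0,L)$-centered estimator is enough to drive the variational gap below the $\alpha V_{w_0}(u)$ tolerance. I would start from the first-order optimality condition for the update in Line~\ref{line:inner-step}: for every $u\in\zset$,
\[
\inner{\tilde{g}_{w_0}(w_{t-1}) + \tfrac{\alpha}{2}\nabla V_{w_0}(w_t) + \tfrac{1}{\eta}\nabla V_{w_{t-1}}(w_t)}{u - w_t}\ge 0.
\]
Applying the three-point identity~\eqref{eq:three-point} to both Bregman gradients and summing over $t=1,\ldots,T$, the $V_{w_{t-1}}(u)-V_{w_t}(u)$ terms telescope and (discarding the non-positive $-\tfrac{\alpha}{2}\sum_t V_{w_t}(u)$ and $-\tfrac{1}{\eta}V_{w_T}(u)$) produce
\[
\sum_t \inner{\tilde{g}_{w_0}(w_{t-1})}{w_t - u} \le \Big(\tfrac{\alpha T}{2} + \tfrac{1}{\eta}\Big) V_{w_0}(u) - \tfrac{\alpha}{2}\sum_t V_{w_0}(w_t) - \tfrac{1}{\eta}\sum_t V_{w_{t-1}}(w_t).
\]
With $\eta=\alpha/(10L^2)$ and $T\ge 40L^2/\alpha^2$, the coefficient of $V_{w_0}(u)$ is at most $3\alpha T/4$, which leaves a surplus of $\alpha T/4$ against the target $\alpha T V_{w_0}(u)$ to absorb error.

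The next step is to convert the left-hand side into $\sum_t \inner{g(w_t)}{w_t-u}$ by writing the difference as
\[
\inner{g(w_t) - \tilde{g}_{w_0}(w_{t-1})}{w_t - w_{t-1}} + \inner{g(w_t) - \tilde{g}_{w_0}(w_{t-1})}{w_{t-1} - u}.
\]
The first (cross) term I bound by Fenchel--Young with parameter $\eta$, so the $\norm{w_t-w_{t-1}}^2$ part becomes $\tfrac{1}{2\eta}\norm{w_t-w_{t-1}}^2 \le \tfrac{1}{\eta}V_{w_{t-1}}(w_t)$ and is cancelled by (half of) the negative $V_{w_{t-1}}(w_t)$ from the summation above; the remaining $\eta\,\E\norm{g(w_t)-\tilde{g}_{w_0}(w_{t-1})}_*^2$ is controlled using Remark~\ref{rem:lipschitz} (so $\norm{g(w_t)-g(w_{t-1})}_*\le 2L\norm{w_t-w_{t-1}}$) together with Lemma~\ref{lem:centered-var} (so $\E\norm{\tilde{g}_{w_0}(w_{t-1})-g(w_{t-1})}_*^2\le 4L^2\norm{w_{t-1}-w_0}^2$), yielding a quantity absorbable into $\tfrac{1}{2\eta}\sum_t V_{w_{t-1}}(w_t)$ and $\tfrac{\alpha}{2}\sum_t V_{w_0}(w_t)$.

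For the second piece I split $g(w_t)-\tilde{g}_{w_0}(w_{t-1}) = (g(w_t)-g(w_{t-1})) + (g(w_{t-1})-\tilde{g}_{w_0}(w_{t-1}))$. The bias part $\inner{g(w_t)-g(w_{t-1})}{w_{t-1}-u}$ is handled by using bilinearity of $g$ (skew-symmetry: $g(z)=Mz+b$ with $M^\top=-M$) to rewrite it as $-\inner{w_t-w_{t-1}}{g(w_{t-1})-g(u)}$, and then Fenchel--Young again produces a $V_{w_{t-1}}(w_t)$ piece absorbable as above; this is where the bilinear restriction is used to avoid an unmanageable $V_{w_{t-1}}(u)$ contribution. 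The genuine noise sub-term $\sum_t\inner{\xi_t}{w_{t-1}-u}$, with $\xi_t\defeq g(w_{t-1})-\tilde{g}_{w_0}(w_{t-1})$ a martingale difference, is the delicate part: for the $\max_u$ against $\tfrac{\alpha T}{4}V_{w_0}(u)$, I invoke Fenchel duality using $1$-strong convexity of $V_{w_0}$, giving
\[
\max_u\Big\{\inner{\textstyle\sum_t\xi_t}{w_0-u} - \tfrac{\alpha T}{4} V_{w_0}(u)\Big\} \le \tfrac{2}{\alpha T}\normBig{\textstyle\sum_t\xi_t}_*^2,
\]
after which I would take expectation and control $\E\|\sum_t\xi_t\|_*^2$ by $\sum_t \E\|\xi_t\|_*^2 \le 4L^2\sum_t \E\norm{w_{t-1}-w_0}^2 \le 8L^2\sum_t \E V_{w_0}(w_{t-1})$, which is absorbed into $-\tfrac{\alpha}{2}\sum_t V_{w_0}(w_t)$ precisely when $T\gtrsim L^2/\alpha^2$.

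The hardest step is the last one: I need the $\max_u$ of the noise term to vanish in expectation even though $u$ can depend on the trajectory randomness. Without the anchor $V_{w_0}$ this would require bounding the supremum of a random walk in the dual norm, which is lossy for non-Euclidean $\norm{\cdot}$; the anchor $\tfrac{\alpha}{2}V_{w_0}$ together with the centered variance bound at $w_0$ (rather than at the iterate) is what lets me reduce the analysis to a scalar second-moment computation. Outside the bilinear setting, $\inner{g(w_t)-g(w_{t-1})}{w_{t-1}-u}$ no longer admits a clean skew-symmetric rewrite, which is why the statement is restricted to bilinear $f$ (and why~\Cref{ssec:highprecision} is needed for the general case). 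Finally, I put all the bounds together and verify that with the stated $\eta$ and $T$ the positive contributions are dominated by the negative $\tfrac{\alpha}{2}\sum_t V_{w_0}(w_t) + \tfrac{1}{\eta}\sum_t V_{w_{t-1}}(w_t)$ reserve, which yields~\eqref{eq:innerloop-guarantee}.
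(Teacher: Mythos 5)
Your high-level plan (regularized mirror descent, telescope, convert $\tilde{g}$ to $g$, handle the randomness against $\max_u$) matches the paper's structure, but your mechanism for exchanging the expectation and the maximum is different from the paper's and contains a genuine gap.

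The paper handles the martingale--versus--adversarial--comparator issue with the ``ghost iterate'' technique of Nemirovski--Juditsky--Lan--Shapiro: it defines an auxiliary mirror-descent sequence $s_t$ driven by $\tilde{\Delta}_t = g(w_t) - \tilde{g}_{w_0}(w_t)$, writes $\inner{\tilde{\Delta}_t}{w_t-u} = \inner{\tilde{\Delta}_t}{w_t-s_t} + \inner{\tilde{\Delta}_t}{s_t-u}$, kills the first piece in expectation because $s_t$ is $\mathcal{F}_{t-1}$-measurable, and bounds the second piece \emph{pointwise in $u$} by a second application of the regret lemma, producing a term $\tfrac{\eta}{2}\sum_t\norms{\tilde{\Delta}_t}_*^2$ which is harmless in expectation. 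Your replacement for this step --- Fenchel duality against $\tfrac{\alpha T}{4}V_{w_0}(u)$, then $\E\|\sum_t \xi_t\|_*^2 \le \sum_t \E\|\xi_t\|_*^2$ --- fails at the last inequality. That bound is correct when $\norm{\cdot}_*$ is an inner-product norm (cross terms vanish for a martingale), but it is \emph{false} for $\norm{\cdot}_* = \norm{\cdot}_\infty$, which is exactly the dual norm in the $\ell_1$-$\ell_1$ setup. For instance, if $\xi_t$ are i.i.d.\ with independent Rademacher coordinates in $\R^n$, then $\sum_t \E\linf{\xi_t}^2 = T$ while $\E\linf{\sum_t \xi_t}^2 \asymp T\log n$. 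That extra $\log n$ factor would destroy the clean $\le 0$ conclusion; there is no slack in~\eqref{eq:innerloop-guarantee} to absorb it. In other words, the anchor $V_{w_0}$ reduces you to a second-moment computation, as you say, but the second moment itself is the problem: $\ell_\infty$ does not have martingale type $2$ with a dimension-free constant. The ghost sequence is not an alternative to this step --- it \emph{is} the step that makes the argument dimension-free.

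A secondary point: you invoke skew-symmetry of bilinear $g$ to control the bias term $\inner{g(w_t)-g(w_{t-1})}{w_{t-1}-u}$. That term is an artifact of the way you telescope: it appears because you keep the gradient indexed at $w_{t-1}$ when passing to the comparator. Lemma~\ref{lem:mirror-descent}, as stated and used by the paper, already performs the index shift (the update uses $\gamma_{t-1}$, but the regret bound is on $\sum_t\inner{\gamma_t+\grad Q(w_t)}{w_t-u}$), so the paper never sees this cross term. Once you set $\gamma_t = \eta(\tilde{g}_{w_0}(w_t)-g(w_0))$ and apply the lemma, the only correction needed is $\tilde{\Delta}_t = g(w_t)-\tilde{g}_{w_0}(w_t)$, which is a zero-mean martingale increment with no Lipschitz-continuity piece attached. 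Proposition~\ref{prop:innerloopproof} therefore holds for \emph{all} monotone $g$ admitting a centered estimator, not only bilinear ones; the bilinear restriction lives in Corollary~\ref{cor:innerloop-oracle} (where it ensures $z\mapsto\inner{g(z)}{z-u}$ is convex so Jensen applies to the iterate average), not in the proposition you are proving.
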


Before discussing the proof of Proposition~\ref{prop:innerloopproof}, we 
state how it implies the relaxed proximal oracle property for the bilinear 
case.
\begin{corollary}\label{cor:innerloop-oracle}
	Let $A\in\R^{m\times n}$ and let $g(z)=(A^\top z\y, -Az\x)$. Then, in 
	the setting of Proposition~\ref{prop:innerloopproof}, 
	$\mc{O}(w_0)=\InnerLoop(w_0, \tilde{g}_{w_0}, \alpha)$ is an 
	$(\alpha,0)$-relaxed proximal oracle.
\end{corollary}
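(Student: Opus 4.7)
The plan is to show that the corollary follows from Proposition~\ref{prop:innerloopproof} by exploiting two special properties of the bilinear gradient mapping $g(z)=(A^\top z\y,-Az\x)$, which combine to reduce the time-averaged gap bound supplied by the Proposition to a single-point gap bound at $\bar w_T$. The two properties are (i) \emph{linearity}: $z\mapsto g(z)$ is a linear operator on $\R^n\times\R^m$, so $g(\bar w_T)=\frac{1}{T}\sum_{t=1}^T g(w_t)$ by Jensen/linearity of expectation; and (ii) \emph{skew-symmetry}: for every $z\in\zset$, $\inner{g(z)}{z}=(z\y)^\top A z\x-(z\x)^\top A^\top z\y=0$.

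Given these, I would write, for any fixed $u\in\zset$,
\begin{equation*}
\frac{1}{T}\sum_{t=1}^T \inner{g(w_t)}{w_t-u}
= \frac{1}{T}\sum_{t=1}^T\Big(\inner{g(w_t)}{w_t}-\inner{g(w_t)}{u}\Big)
= -\Big\langle\tfrac{1}{T}\sum_t g(w_t),u\Big\rangle
= -\inner{g(\bar w_T)}{u},
\end{equation*}
using skew-symmetry in the second equality and linearity in the third. A second application of skew-symmetry gives $-\inner{g(\bar w_T)}{u}=\inner{g(\bar w_T)}{\bar w_T-u}$. Since this identity holds pointwise in $u$, taking a maximum over $u\in\zset$ and then expectation turns the conclusion of Proposition~\ref{prop:innerloopproof} into
\begin{equation*}
\E\max_{u\in\zset}\Big[\inner{g(\bar w_T)}{\bar w_T-u}-\alpha V_{w_0}(u)\Big]\le 0,
\end{equation*}
which is exactly the defining inequality (Definition~\ref{def:alphaprox}) for an $(\alpha,0)$-relaxed proximal oracle applied to the output $\bar w_T=\mathcal{O}(w_0)$. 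Convexity of $\zset$ ensures $\bar w_T\in\zset$, and the parameter choices $\eta=\alpha/(10L^2)$, $T\ge 40L^2/\alpha^2$ are inherited verbatim from the Proposition.

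There is essentially no obstacle: the only thing to be careful about is that linearity and skew-symmetry are used \emph{inside} the maximum over $u$, i.e.\ they provide a deterministic equality of the two functionals of $u$, so passing to $\max_u$ and then to $\E$ is immediate. If one wanted to state the oracle for general convex-concave $f$ this reduction would fail (because $\inner{g(w_t)}{w_t}\ne 0$ and $g$ is not linear), and one would need a separate argument such as the one promised in Section~\ref{ssec:highprecision}; but for the bilinear setting the corollary is an immediate structural consequence of the Proposition.
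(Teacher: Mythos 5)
Your proof is correct and takes essentially the same route as the paper. Both arguments hinge on the skew-symmetry $\inner{g(z)}{w}=-\inner{g(w)}{z}$ (hence $\inner{g(z)}{z}=0$) and linearity to collapse $\frac{1}{T}\sum_t\inner{g(w_t)}{w_t-u}$ to $\inner{g(\bar w_T)}{\bar w_T-u}$; the paper's chain of equalities happens to route everything through $\inner{g(u)}{w_t}$ and never invokes linearity of $g$ by name (only skew-symmetry and linearity of the inner product), but since skew-symmetry of a mapping in this sense already forces it to be linear, the two derivations are interchangeable.
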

\begin{proof}
	Note that $\inner{g(z)}{w}=-\inner{g(w)}{z}$ for any $z,w\in\zset$ and 
	consequently $\inner{g(z)}{z}=0$. Therefore, the iterates $w_1, \ldots, 
	w_T$ of Algorithm~\ref{alg:innerloop} and its output $\bar{w}_T = 
	\frac{1}{T}\sum_{t=1}^T w_t$ satisfy for every $u\in\zset$,
	\begin{equation*}
	\frac{1}{T}\sum_{t \in [T]} \inner{g(w_t)}{w_t - u} =\frac{1}{T}\sum_{t \in 
	[T]} \inner{g(u)}{w_t}
	=  \inner{g(u)}{\bar{w}_T} =
	\inner{g(\bar{w}_T)}{\bar{w}_T-u}.
	\end{equation*}
	Substituting into the bound~\eqref{eq:innerloop-guarantee}
	yields the $(\alpha, 0)$-relaxed proximal oracle property in 
	Definition~\ref{def:alphaprox}.
\end{proof}
\noindent
More generally, the proof of Corollary~\ref{cor:innerloop-oracle} shows that Algorithm~\ref{alg:innerloop} implements a relaxed 
proximal oracle whenever $z\mapsto \inner{g(z)}{z-u}$ is convex for every 
$u$. In \Cref{ssec:highprecision} we implement an $(\alpha, 
\veps)$-relaxed 
proximal oracle without such an assumption.

The proof of Proposition~\ref{prop:innerloopproof} is a somewhat lengthy  
application of existing techniques for stochastic mirror descent analysis in 
conjunction with Definition~\ref{def:w-l-centered}. We give it in full in 
Appendix~\ref{app:innerloop} 
\notarxiv{
and sketch it briefly here. %
We view Algorithm~\ref{alg:innerloop} as mirror 
descent with stochastic gradients $\tilde{\delta}_t = \tilde{g}_{w_0}(w_t) - 
g(w_0)$ and composite term $\inner{g(w_0)}{z} + 
\tfrac{\alpha}{2}V_{w_0}(z)$. For any $u\in\zset$, the standard mirror 
descent analysis (see 
Lemma~\ref{lem:mirror-descent} in Appendix~\ref{app:mirror-descent}) 
bounds the regret  $\sum_{t\in[T]} \inner{\tilde{g}_{w_0}(w_t) + 
\tfrac{\alpha}{2}\grad V_{w_0}(w_t)}{w_t-u}$ in terms of the distance to 
initialization $V_{w_0}(u)$ and the stochastic gradient norms $ 
\norms{\tilde{\delta}_t}_*^2$ for $t\in[T]$. Bounding these norms 
via~\Cref{def:w-l-centered} 
and rearranging the $\inner{\grad V_{w_0}(w_t)}{w_t-u}$ terms, we show 
that $\E \left[\frac{1}{T} \sum_{t \in [T]} \inner{g(w_t)}{w_t - u} - \alpha 
V_{w_0}(u)\right]\le0$ for all $u\in\zset$. To reach our desired result we 
must swap the order of the expectation and ``for all.'' We do so using the 
``ghost iterate'' technique due to~\citet{NemirovskiJLS09}.
}
\arxiv{
and review the main steps here. 
\paragraph{Regret bound.} Viewing the 
iterations of Algorithm~\ref{alg:innerloop} as stochastic mirror descent 
with stochastic gradients $\tilde{\delta}_t = \tilde{g}_{w_0}(w_t) - g(w_0)$ 
and 
composite term $\inner{g(w_0)}{z} + \tfrac{\alpha}{2}V_{w_0}(z)$, the 
standard 
mirror descent regret bound (see Lemma~\ref{lem:mirror-descent} in 
Appendix~\ref{app:mirror-descent}) gives
\begin{equation}\label{eq:composite-regret-basic}
\sum_{t\in[T]} \inner{\tilde{g}_{w_0}(w_t) + \tfrac{\alpha}{2}\grad 
V_{w_0}(w_t)}{w_t-u} \le \frac{V_{w_0}(u)}{\eta} 
+ \frac{\eta}{2}\sum_{t\in[T]} \norms{\tilde{\delta}_t}_*^2
\end{equation}
deterministically for all $u\in\zset$. 

\paragraph{Regularization.} Substituting the equality~\eqref{eq:three-point} and rearranging gives
\begin{equation}\label{eq:stoch-reg-bound}
\frac{1}{T}
\sum_{t\in[T]} 
\inner{\tilde{g}_{w_0}(w_t)}{w_t - u} - \alpha V_{w_0}(u)
\le \left(\frac{1}{\eta T} - \frac{\alpha}{2}\right) V_{w_0}(u) 
+ \frac{1}{T}\sum_{t\in[T]}
	 \left[ \frac{\eta}{2} \norms{\tilde{\delta}_t}_*^2 
	 - \frac{\alpha}{2} V_{w_0}(w_t)
	 \right]
\end{equation}
and taking $T \ge \frac{4}{\eta \alpha}$ guarantees $\big(\frac{1}{\eta T} - 
\frac{\alpha}{2}\big) V_{w_0}(u) \le 0$ for all $u$. 

\paragraph{Variance bound.} 
Using the second centered gradient estimator property and strong 
convexity 
of the distance generating function, we have
\begin{equation*}
\E \left[ \frac{\eta}{2} \norms{\tilde{\delta}_t}_*^2 
- \frac{\alpha}{2} V_{w_0}(w_t)
\right] \le 
\E \left[ \frac{\eta L^2}{2} \norm{w_t - w_0}^2 
- \frac{\alpha}{2} V_{w_0}(w_t)
\right] \le 
\left(\eta L^2- \frac{\alpha}{2} \right) \E V_{w_0}(w_t) \le 0
\end{equation*}
for $\eta \le \frac{\alpha}{2L^2}$. Since the RHS 
of~\eqref{eq:stoch-reg-bound} is nonpositive in expectation and the 
gradient 
estimator is unbiased, we have 
$\max\limits_{u\in\zset}\Ex{}\left[\frac{1}{T} \sum_{t \in [T]} 
\inner{g(w_t)}{w_t - u} - \alpha V_{w_0}(u)\right]
\le 0.$ 

\paragraph{Exchanging maximum and expectation.} When $u$ depends 
on $\tilde{g}_{w_0}(w_t)$ we generally have \linebreak
$\E \inner{\tilde{g}_{w_0}(w_t)-g(w_t)}{w_t-u}\ne 0$. 
To address this issue we use a technique due to~\citet{NemirovskiJLS09}. 
Writing $\tilde{\Delta}_t = \tilde{g}_{w_0}(w_t)-g(w_t)$ and defining the  
``ghost 
iterates'' $s_t = \prox{s_{t-1}}{\tilde{\Delta}_{t-1}}[1/\eta]$ 
 with $s_0 = 
w_0$, we rewrite $\inners{\tilde{\Delta}_t}{w_t-u}$ as 
$\inners{\tilde{\Delta}_t}{w_t-s_t} + \inners{\tilde{\Delta}_t}{s_t-u}$. Since 
$s_t$ does not depend on randomness in $\tilde{g}_{w_0}(w_t)$, we have 
$\E \inners{\tilde{\Delta}_t}{w_t-s_t}=0$. To handle the term 
$\sum_{t}\inners{\tilde{\Delta}_t}{s_t-u}$ we use the standard mirror 
descent regret bound again, absorbing the result into the RHS 
of~\eqref{eq:stoch-reg-bound} using 
$V_{s_0}(u)=V_{w_0}(u)$ and  $\E 
\norms{\tilde{\Delta}_t}_*^2 \le 
4L^2 \E 
\norm{w_t - w_0}^2$, which follows from Lemma~\ref{lem:centered-var}.
}

\section{Application to bilinear saddle point problems}
\label{sec:apps}

We now construct centered gradient estimators (as per 
Definition~\ref{def:w-l-centered}) for the linear gradient mapping 
\begin{equation*}
g(z) =  (A^\top z\y, -Az\x) 
\text{ corresponding to the bilinear saddle point problem }
\min_{x\in\xset}\max_{y\in\yset} y^\top A x.
\end{equation*}
\arxiv{
We consider two domain types, namely $\ell_1$ (the simplex) and $\ell_2$ 
(the Euclidean ball). In Section~\ref{ssec:l1l1} we present a centered 
gradient estimator and resulting runtime guarantees for $\ell_1$-$\ell_1$ 
games. In Section~\ref{ssec:l1l2} we first give a centered gradient estimator 
$\ell_2$-$\ell_1$ with a suboptimal constant $L$ (larger than the Lipschitz 
constant of $g$). We then obtain the correct Lipschitz constant dependence 
using a local norms analysis, which requires clipping the gradient estimates 
in order to control the magnitude of the updates. Finally, in 
Section~\ref{ssec:l2l2} we give a gradient estimator for $\ell_2$-$\ell_2$ 
games. Unlike the previous two setups, the estimator constant $L$ for 
$\ell_2$-$\ell_2$ games does not match the Lipschitz constant of the 
underlying gradient mapping. Such mismatch is consistent with prior 
findings in the literature.
}%
\notarxiv{
	\Cref{ssec:l1l1,ssec:l1l2} consider the $\ell_1$-$\ell_1$ and 
	$\ell_2$-$\ell_1$ settings, respectively; in \Cref{ssec:l2l2} 
	we show how our approach naturally extends to the $\ell_2$-$\ell_2$ 
	setting as well.
}%
Throughout, we let $w_0$ denote the ``center'' (i.e.\ reference 
point) of our stochastic gradient estimator and consider a general query 
point $w\in\zset=\xset\times\yset$. We also recall the notation $[v]_i$ for 
the $i$th entry of vector $v$. %

\subsection{$\ell_1$-$\ell_1$ games}
\label{ssec:l1l1}

\paragraph{Setup.}
Denoting the $d$-dimensional simplex by $\Delta^d$, we let  
$\xset=\Delta^n$, $\yset=\Delta^m$ and $\zset=\xset\times\yset$. 
We take $\norm{z}$ to be $\sqrt{\lone{z\x}^2 + \lone{z\y}^2}$ with 
dual norm
$\norm{\gamma}_* = \sqrt{\linf{\gamma\x}^2 + \linf{\gamma\y}^2}$. We 
take the distance 
generating function $r$ to be the negative entropy, i.e.\ $r(z)=\sum_i [z]_i 
\log [z]_i$, and note that it is 1-strongly convex w.r.t.\ $\norm{\cdot}$ 
and has range $\Theta = \log(mn)$.
Finally we set 
\[\norm{A}_{\max} \defeq \max_{i, j}|A_{ij}|\] and note that this is the 
 Lipschitz constant of the gradient mapping $g$ under the chosen norm.

\arxiv{
\subsubsection{Gradient estimator}
}
\notarxiv{
\paragraph{Gradient estimator.}
}
Given $w_0=(w_0\x, w_0\y)$ and  $g(w_0) = (A^\top w_0\y, -A w_0\x)$, 
we describe the reduced-variance gradient estimator $\tilde{g}_{w_0}(w)$. 
First, we define the probabilities $p(w)\in\Delta^m$ and $q(w)\in \Delta^n$ 
according to,
\begin{equation}\label{eq:l1-prob-def}
p_i(w)\defeq\frac{\left|[w\y]_i-[w_0\y]_i\right|}{\left\Vert w\y - 
w_0\y\right\Vert _{1}}~~\mbox{and}~~\ 
q_j(w)\defeq\frac{|[w\x]_j-[w_0\x]_j|}{\left\Vert w\x - w_0\x\right\Vert 
_{1}}.
\end{equation}
To compute  $\tilde{g}_{w_0}$ we sample $i\sim p(w)$ and $j\sim q(w)$ 
independently, and set
\begin{equation}
\label{eq:tgdef}
\begin{aligned}
\tilde{g}_{w_0}(w)& \defeq \left(A^\top 
w_0\y+\ai\frac{[w\y]_i-[w_0\y]_i}{p_i(w)},
-Aw_0\x-\aj\frac{[w\x]_j-[w_0\x]_j}{q_j(w)}\right),
\end{aligned}
\end{equation}
where $\ai$ and $\aj$ are the $i$th row and $j$th column of $A$, 
respectively. 
Since the sampling distributions $p(w),q(w)$ are proportional to the absolute 
value of the difference between blocks of $w$ and $w_0$, we call
strategy~\eqref{eq:l1-prob-def} ``sampling from the difference.'' 
Substituting~\eqref{eq:l1-prob-def} into~\eqref{eq:tgdef} gives the 
explicit form %
\begin{equation}
\label{eq:tgdef-l1}
\tilde{g}_{w_0}(w) = g(w_0) + \left(
\ai  {\lones{w\y - w_0\y}}{\sign([w\y-w_0\y]_i)}, 
-\aj  {\lones{w\x - w_0\x}}{\sign([w\x-w_0\x]_j)}\right).
\end{equation}
A straightforward calculation 
shows that this construction satisfies Definition~\ref{def:w-l-centered}.

\begin{restatable}{lemma}{restateSimplexEst}
\label{lem:l1-gradient-est} 
In the $\ell_1$-$\ell_1$ setup, the estimator~\eqref{eq:tgdef-l1} is 
$(w_0,L)$-centered with $L=\norm{A}_{\max}$.
\end{restatable}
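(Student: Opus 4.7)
My plan is to verify the two properties of Definition~\ref{def:w-l-centered} directly from the explicit form~\eqref{eq:tgdef-l1}. The main point is that the ``sampling from the difference'' choice~\eqref{eq:l1-prob-def} exactly cancels the denominator in~\eqref{eq:tgdef}, so after substitution the deviation $\tilde{g}_{w_0}(w)-g(w_0)$ has a particularly clean form in which the randomness only picks out a row/column of $A$ while the scale of the deviation is the deterministic quantity $\lone{w\y-w_0\y}$ (or the $\xset$-analogue).

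\textbf{Unbiasedness.} I would handle the two blocks of $\tilde{g}_{w_0}$ separately. For the $\xset$-block in~\eqref{eq:tgdef},
\[
\E_{i\sim p(w)}\!\left[A^\top w_0\y + \ai \frac{[w\y]_i-[w_0\y]_i}{p_i(w)}\right]
= A^\top w_0\y + \sum_{i=1}^m \ai ([w\y]_i-[w_0\y]_i) = A^\top w\y,
\]
using $\sum_i \ai v_i = A^\top v$ (with $\ai$ the $i$-th row of $A$ viewed as a column vector). An identical computation on the $\yset$-block using $j\sim q(w)$ yields $-Aw\x$, so $\E\tilde{g}_{w_0}(w)=g(w)$. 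A minor edge case is $w\y=w_0\y$ (then $p(w)$ is undefined), but in that case the corresponding block of the update vanishes and no sample is needed; the same for the $\yset$-block.

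\textbf{Variance bound.} Here I would use the explicit form~\eqref{eq:tgdef-l1}, from which
\[
\tilde{g}_{w_0}(w)-g(w_0) = \bigl(\ai\,\lone{w\y-w_0\y}\,s_i,\; -\aj\,\lone{w\x-w_0\x}\,t_j\bigr)
\]
for signs $s_i,t_j\in\{-1,0,1\}$ and random indices $i,j$. The definition of the dual norm gives $\norm{\tilde{g}_{w_0}(w)-g(w_0)}_*^2 = \linf{\ai}^2\lone{w\y-w_0\y}^2 + \linf{\aj}^2\lone{w\x-w_0\x}^2$, and $\linf{\ai},\linf{\aj}\le \norm{A}_{\max}$ deterministically. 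Hence
\[
\norm{\tilde{g}_{w_0}(w)-g(w_0)}_*^2 \le \norm{A}_{\max}^2\bigl(\lone{w\x-w_0\x}^2+\lone{w\y-w_0\y}^2\bigr) = \norm{A}_{\max}^2\norm{w-w_0}^2,
\]
by the definition of $\norm{\cdot}$ in this setup. Taking expectations is a formality since the bound is deterministic.

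There is essentially no obstacle here; the only thing to be careful about is the indexing of rows versus columns of $A$ (the estimator for $\nabla_x f$ uses rows of $A$, and vice versa) and the convention for $p,q$ when $w\y=w_0\y$ or $w\x=w_0\x$. The proof that results is roughly half a page of direct calculation and yields $L=\norm{A}_{\max}$, matching the Lipschitz constant of $g$ recorded in~\eqref{eq:L-vals}.
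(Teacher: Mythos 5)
Your proof is correct and follows essentially the same route as the paper's: the unbiasedness calculation matches the paper's "immediate by inspection" claim (you just spell it out), and the variance bound is the identical deterministic computation $\norm{\tilde{g}_{w_0}(w)-g(w_0)}_*^2 = \linf{\ai}^2\lone{w\y-w_0\y}^2 + \linf{\aj}^2\lone{w\x-w_0\x}^2 \le \norm{A}_{\max}^2\norm{w-w_0}^2$. The only difference is that you explicitly note the degenerate case $w\y=w_0\y$, which the paper leaves implicit.
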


\notarxiv{\vspace{-\baselineskip}}

\begin{proof}
	The first property ($\E \tilde{g}_{w_0}(w) = g(w)$) follows immediately by 
	inspection of~\eqref{eq:tgdef}. The second property follows 
	from~\eqref{eq:tgdef-l1} by noting that
	\begin{equation*}
	\norm{\tilde{g}_{w_0}(w) - g(w_0)}_*^2 
	=  
	\linf{\ai}^2 \lones{w\y -w_0\y}^2 + 
	\linf{\aj}^2 \lones{w\x -w_0\x}^2  
	\le \norm{A}_{\max}^2\norm{w - w_0}^2.
	\end{equation*}
	for all $i,j$, and therefore $\E \norm{\tilde{g}_{w_0}(w) - g(w_0)}_*^2 
	\le \norm{A}_{\max}^2\norm{w - actw_0}^2$. 
\end{proof}

\noindent
The proof of Lemma~\ref{lem:l1-gradient-est} reveals that the proposed 
estimator satisfies a stronger version of Definition~\ref{def:w-l-centered}: 
the last property and also Lemma~\ref{lem:centered-var} hold with 
probability 1 rather than in expectation.

\arxiv{
We note that while it naturally arises from our variance requirements, our 
gradient estimator appears to be fundamentally different from those 
used in known variance-reduced 
algorithms~\citep{BalamuruganB16,ShiZY17,ChavdarovaGFL19,MishchenkoKSRM19}.
In particular, in standard 
finite-sum settings, estimators in the literature sample from a fixed 
distribution~\citep{Johnson13, Zhu17, BottouCN18}. In contrast, our 
sampling 
distributions change dynamically with respect to the current point $w$, 
similarly to the (fixed-variance) estimators in~\cite{ClarksonHW10}. 
}

\newcommand{\algOneOne}{
\SetKwComment{Comment}{$\triangleright$\ }{}
\SetCommentSty{color{black}}
\begin{algorithm}[t]
	\DontPrintSemicolon
	\KwInput{Matrix $A\in\R^{m\times n}$ with $i$th row $\ai$ and $j$th 
		column $\aj$, target accuracy $\epsilon$}
	\KwOutput{A point with expected duality gap below $\epsilon$}
	
	\vspace{3pt}
	$L\gets\max_{ij}|A_{ij}|$, $\alpha \gets L\sqrt{\frac{n+m}{\nnz(A)}}$, 
	$K\gets \ceil{\frac{\log(nm)\alpha}{\epsilon}}$, $\eta \gets 
	\frac{\alpha}{10L^2}$, $T\gets\ceil{\frac{4}{\eta\alpha}}$, 
	$z_0\gets (\frac{1}{n}\textbf{1}_n, \frac{1}{m}\textbf{1}_m)$\;
	\For{$k=1,\ldots,K$}
	{
		\vspace{3pt}%
		\Comment{\emph{Relaxed oracle query:}}

		$\displaystyle (x_0, y_0) \gets (z_{k-1}\x,z_{k-1}\y)$, $(g\x_0, g\y_0) 
		\gets (A^\top y_0, -A x_0)$\label{line:init-center}\; 
		
		\vspace{3pt}
		
		\For{$t=1,\ldots,T$}
		{
			\vspace{3pt}%
			\Comment{\emph{Gradient estimation:}}
			\vspace{-4pt}%
			
			Sample $i\sim p$ where $\displaystyle p_i = \frac{\left| [y_{t-1}]_i 
				- [y_{0}]_i 
				\right|}{\lone{y_{t-1}-y_0}}$,
			sample $j\sim q$ where $\displaystyle q_j = \frac{\left| [x_{t-1}]_j 
				- [x_{0}]_j 
				\right|}{\lone{x_{t-1}-x_0}}$\;
			
			Set $\displaystyle\tilde{g}_{t-1} = g_0 +  \left(
			\ai \frac{[y_{t-1}]_i - [y_{0}]_i}{p_i}, 
			-\aj \frac{[x_{t-1}]_j - [x_{0}]_j}{q_j}\right)$\;

			\Comment{\emph{Mirror descent step:}}
			
			$\displaystyle x_t \gets \Pi_{\xset}\left( \frac{1}{1+\eta\alpha/2} 
			\left( \log x_{t 
				- 1} + \frac{\eta\alpha}{2}\log x_0 - \eta \tilde{g}_{t-1}\x 
			\right)\right)$
			\Comment*[f]{$\Pi_{\xset}(v)=\frac{e^v}{\lone{e^v}}$}
			\;

			$\displaystyle y_t \gets \Pi_{\yset}\left( \frac{1}{1+\eta\alpha/2} 
			\left(
			\log y_{t - 1} + 
			\frac{\eta\alpha}{2}\log 
			y_0 - \eta \tilde{g}_{t-1}\y
			\right)\right)$
			\Comment*[f]{$\Pi_{\yset}(v)=\frac{e^v}{\lone{e^v}}$}
			\; 
		}
		$\displaystyle z_{k - 1/2} \gets \frac{1}{T}\sum_{t=1}^T (x_t, y_t)$\;

		\Comment{\emph{Extragradient step:}}
		
		$\displaystyle z_k\x \gets \Pi_{\xset}\left(\log z_{k-1}\x 
		-\tfrac{1}{\alpha} A^\top z_{k-1/2}\y 
		\right)$\label{line:extragrad-x}\;

		$\displaystyle z_k\y \gets \Pi_{\yset}\left(\log z_{k-1}\y 
		+\tfrac{1}{\alpha} A z_{k-1/2}\x \right)$\label{line:extragrad-y}\;

	}
	\Return $\displaystyle \frac{1}{K}\sum_{k=1}^K z_{k-1/2}$
    \caption{Variance reduction for $\ell_1$-$\ell_1$ 
    games}
	\label{alg:l1l1}
\end{algorithm}
}

\arxiv{\algOneOne}

\arxiv{
	\subsubsection{Full algorithm and complexity 
		analysis}
}
\notarxiv{
	\paragraph{Runtime bound.}
}
Combining the centered gradient estimator~\eqref{eq:tgdef}, the relaxed 
oracle implementation (Algorithm~\ref{alg:innerloop}) and the extragradient
outer loop (Algorithm~\ref{alg:outerloop}), we obtain our main result for 
$\ell_1$-$\ell_1$ games: an accelerated stochastic variance reduction 
algorithm. We write the resulting complete method explicitly as 
Algorithm~\ref{alg:l1l1}\notarxiv{ in~\Cref{app:l1l1-alg}}. The algorithm 
enjoys the following runtime guarantee\notarxiv{ (see proof 
	in~\Cref{app:l1l1-proof})}.

\begin{restatable}{theorem}{restateThmOneOne}
\label{thm:l1l1}
Let $A\in\R^{m\times n}$, $\epsilon > 0$, and $\alpha \geq 
\epsilon / \log(nm)$. Algorithm~\ref{alg:l1l1} outputs a point 
$z=(z\x,z\y)$ such that
\arxiv{
\begin{equation*}
\E\, \left[\max_{y\in\Delta^m} y^\top A z\x - \min_{x\in\Delta^n} 
(z\y)^\top A x \right]
=
\E \left[\max_{i}\, {[Az\x]}_i - \min_{j} \, {[A^\top 
z\y]}_j\right] \le \epsilon,
\end{equation*}
}%
\notarxiv{
$\E\, \left[\max_{y\in\Delta^m} y^\top A z\x - \min_{x\in\Delta^n} 
(z\y)^\top A x \right]
=
\E \left[\max_{i}\, {[Az\x]}_i - \min_{j} \, {[A^\top 
	z\y]}_j\right] \le \epsilon$,
}
 and runs in time 
\begin{equation}\label{eq:l1l1-time}
O\left(\left(\nnz(A) + \frac{(m + n)\norm{A}_{\max}^2}{\alpha^2} 
\right)\frac{\alpha\log(mn)}{\epsilon}\right).
\end{equation}
Setting $\alpha$ optimally, the running time is 
\begin{equation}\label{eq:l1l1-time-opt}
O\left(\nnz(A) + \frac{\sqrt{\nnz(A) (m + n)} 
\norm{A}_{\max}\log(mn)}{\epsilon}\right).
\end{equation}
\end{restatable}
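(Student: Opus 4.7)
The plan is to assemble the theorem from the three ingredients established earlier: the centered estimator guarantee (Lemma~\ref{lem:l1-gradient-est}), the inner-loop oracle implementation (Corollary~\ref{cor:innerloop-oracle}), and the outer-loop convergence rate (Proposition~\ref{prop:outerloopproof}). First, since $f(x,y)=y^\top A x$ is bilinear and the estimator $\tilde g_{w_0}$ used in Algorithm~\ref{alg:l1l1} is precisely~\eqref{eq:tgdef-l1}, Lemma~\ref{lem:l1-gradient-est} gives that $\tilde g_{w_0}$ is $(w_0,L)$-centered with $L=\norm{A}_{\max}$. With the step size $\eta=\alpha/(10L^2)$ and inner-loop length $T\ge 40L^2/\alpha^2$ set in the algorithm, Corollary~\ref{cor:innerloop-oracle} then says that one call to $\InnerLoop(z_{k-1},\tilde g_{z_{k-1}},\alpha)$ implements an $(\alpha,0)$-relaxed proximal oracle for $g$.

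Second, I would verify that the outer loop exactly instantiates Algorithm~\ref{alg:outerloop} for this oracle and the entropy distance generating function. The induced Bregman divergence is the KL divergence, and on $\Delta^n\times\Delta^m$ the range satisfies $\Theta\le\log(nm)$. Proposition~\ref{prop:outerloopproof} then yields
\[
\E\,\gap(\bar z_K)\le \frac{\alpha\Theta}{K}\le\frac{\alpha\log(nm)}{K},
\]
so choosing $K=\lceil\alpha\log(nm)/\epsilon\rceil$ (as in the algorithm) bounds the expected duality gap by $\epsilon$. The bilinear simplex-simplex structure reduces the duality gap to the stated $\max_i[Az\x]_i-\min_j[A^\top z\y]_j$.

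Third, I would analyze the per-iteration cost. One outer iteration costs $O(\nnz(A))$ to form $g(z_{k-1})=(A^\top z_{k-1}\y,-Az_{k-1}\x)$ on line~\ref{line:init-center} and another $O(\nnz(A))$ for the extragradient matrix-vector products on lines~\ref{line:extragrad-x}--\ref{line:extragrad-y}. Each of the $T$ inner iterations costs $O(n+m)$: constructing the sampling distributions $p,q$ from $w_{t-1}$ and $w_0$, drawing $i,j$, reading the row $\ai$ and column $\aj$ (which naively costs $O(n+m)$ regardless of sparsity), and performing the entropic mirror step on lines--each of which is a coordinate-wise exponentiate/normalize on vectors of length $n$ or $m$. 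Aggregating, one outer iteration costs $O(\nnz(A)+T(n+m))=O(\nnz(A)+(n+m)L^2/\alpha^2)$, and multiplying by $K=O(\alpha\log(nm)/\epsilon)$ gives the runtime~\eqref{eq:l1l1-time}. Balancing $\nnz(A)\asymp (n+m)L^2/\alpha^2$ yields the optimal $\alpha=L\sqrt{(n+m)/\nnz(A)}$ and substitution produces~\eqref{eq:l1l1-time-opt}; the assumption $\alpha\ge\epsilon/\log(nm)$ just ensures $K\ge 1$.

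The main obstacle is really bookkeeping rather than mathematics: one must check that the $\InnerLoop$ step, which includes both a regularizer centered at $w_0$ and the previous-iterate Bregman term, admits a closed-form entropic update computable in $O(n+m)$ time, and that sampling from the ``difference'' distributions~\eqref{eq:l1-prob-def} can be implemented in the same budget by maintaining $w_{t-1}-w_0$ implicitly. Everything else is a direct chaining of the already-proven oracle and outer-loop guarantees with a straightforward choice of parameters.
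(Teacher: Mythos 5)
Your proposal is correct and follows essentially the same route as the paper's own proof: chain Lemma~\ref{lem:l1-gradient-est} and Corollary~\ref{cor:innerloop-oracle} to get an $(\alpha,0)$-relaxed proximal oracle, apply Proposition~\ref{prop:outerloopproof} with $\Theta=\log(nm)$ to bound the expected gap, then account for $O(\nnz(A))$ per outer step plus $O(n+m)$ per inner step and balance to pick $\alpha$. One minor imprecision: the hypothesis $\alpha\ge\epsilon/\log(nm)$ does not merely ensure $K\ge1$ (the ceiling guarantees that unconditionally), but rather ensures $K=\lceil\alpha\log(nm)/\epsilon\rceil=O(\alpha\log(nm)/\epsilon)$ so that the stated runtime bound~\eqref{eq:l1l1-time} actually dominates the true cost.
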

\newcommand{\proofOneOne}
{
\begin{proof}
First, we prove the expected duality gap bound. By 
Lemma~\ref{lem:l1-gradient-est} and 
Corollary~\ref{cor:innerloop-oracle} (with $L = \norm{A}_{\max}$), 
$\InnerLoop$ is an ($\alpha,0$)-relaxed proximal oracle. On 
$\Delta^d$, negative entropy has minimum value $-\log d$ and is non-positive, therefore 
for the $\ell_1$-$\ell_1$ domain we have $\Theta = \max_{z'}r(z') - 
\min_{z}r(z) = \log(nm)$. By Proposition~\ref{prop:outerloopproof}, running 
$K\ge \alpha \log(nm) /\epsilon$ iterations 
guarantees an $\epsilon$-approximate saddle 
point in expectation. 

Now, we prove the runtime bound. Lines~\ref{line:init-center}, 
\ref{line:extragrad-x} and \ref{line:extragrad-y} of Algorithm~\ref{alg:l1l1} 
each take time $O(\nnz(A))$, as they involve matrix-vector products with 
$A$ and $A^\top$. All other lines run in time $O(n+m)$, 
as they consist of sampling and vector arithmetic (the time to compute 
sampling probabilities dominates the runtime of sampling).
 Therefore, the total runtime 
is $O((\nnz(A) + (n+m)T)K)$. Substituting $T\le 1+\frac{40L^2}{\alpha^2}$ 
and $K\le1+\frac{\log(nm)\alpha}{\epsilon}$ gives the 
bound~\eqref{eq:l1l1-time}.
Setting
\begin{equation*}
\alpha=\max\left\{
\frac{\epsilon}{\log{nm}}\,,\,
\norm{A}_{\max} \sqrt{\frac{n+m}{\nnz(A)}}
\right\}
\end{equation*}
gives the optimized bound~\eqref{eq:l1l1-time-opt}.
\end{proof}

\begin{remark}
We can improve the $\log(mn)$ factor in~\eqref{eq:l1l1-time} 
and~\eqref{eq:l1l1-time-opt} to $\sqrt{\log 
m \log n}$ by the transformation $\xset \to c\cdot \xset$ and 
$\yset \to \frac{1}{c} \cdot \yset$ where $c=\left({\log m}/{\log 
	n}\right)^{1/4}$. This 
transformation leaves the 
problem unchanged and reduces $\Theta$ 
from $\log(mn)$ to $2\sqrt{\log m \log n}$. It is also 
equivalent to proportionally using slightly different step-sizes for the $\xset$ and 
$\yset$ block. See also~\cite[Example 1]{Nemirovski04} and 
Section~\ref{ssec:strongly}.
\end{remark}
}
\arxiv{\proofOneOne}

\subsection{$\ell_2$-$\ell_1$ games}
\label{ssec:l1l2}
\paragraph{Setup.}
We set $\xset=\ball^n$ to be the
$n$-dimensional Euclidean ball of radius $1$, while $\yset=\Delta^m$ 
remains the simplex. For $z = (z\x, z\y)\in\zset=\xset\times\yset$ we 
define a norm by 
\[
\norm{z}^{2}=\norm{z\x}_{2}^{2}+\norm{z\y}_{1}^{2}
~\text{ with dual norm }~
 \norm{g}_*^2=\norm{g\x}_{2}^{2}+\norm{g\y}_{\infty}^{2}.
\]
For distance generating function we take $r(z)=r\x(z\x)+r\y(z\y)$ with 
$r\x(x)=\frac{1}{2}\norm{x}_2^2$ and $r\y(y)=\sum_i y_i \log y_i$; $r$ is 
1-strongly convex w.r.t.\ to $\norm{\cdot}$ and has range $\half+\log m 
\le 
\log(2m)$. 
 Finally, we denote  
 \[
 \nA= \max_{i \in [m]} \norm{\ai}_2,
 \]
 and note that this is the Lipschitz constant of $g$ under $\norm{\cdot}$.

\newcommand{\presentationTwoOne}{
\subsubsection{Basic gradient estimator}\label{ssec:l2l1-basic}
We first present a straightforward adaptation of the $\ell_1$-$\ell_1$ 
gradient estimator, which we subsequently improve to obtain the optimal 
Lipschitz constant dependence. Following the ``sampling from the 
difference'' strategy, consider a gradient estimator $\tilde{g}_{w_0}$ 
computed as in~\eqref{eq:tgdef}, but with the following different choice of 
$q(w)$:
\begin{equation}
\label{eq:tgdef-l2-probs}
\begin{aligned}
 p_i(w)=\frac{\left|[w\y]_i-[w_0\y]_i\right|}{\left\Vert w\y - 
 w_0\y\right\Vert _{1}}~~\mbox{and}~~
 \ q_j(w)=\frac{([w\x]_j-[w_0\x]_j)^2}{\left\Vert w\x - 
 w_0\x\right\Vert^2_{2}}.
\end{aligned}
\end{equation}
The resulting gradient estimator has the explicit form
\begin{equation}
\label{eq:tgdef-l2}
\tilde{g}_{w_0}(w) = g(w_0) + \left(
\ai \frac{\lones{w\y - w_0\y}}{\sign([w\y-w_0\y]_i) }, 
-\aj  \frac{\ltwos{w\x - w_0\x}^2}{[w\x-w_0\x]_j}\right).
\end{equation}
(Note that $\tilde{g}_{w_0}$ of the form~\eqref{eq:tgdef} is finite with 
probability 1.) Direct calculation shows it is centered.

\begin{restatable}{lemma}{restateNaive}
\label{lem:l2-gradient-est-naive}
In the $\ell_2$-$\ell_1$ setup, the estimator~\eqref{eq:tgdef-l2} is 
$(w_0,L)$-centered with $L = 
\sqrt{\sum_{j\in[n]}\|\aj\|^2_\infty}$.
\end{restatable}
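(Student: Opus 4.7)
The plan is to verify the two conditions of~\Cref{def:w-l-centered} directly, mirroring the structure of the proof of~\Cref{lem:l1-gradient-est} but with the asymmetry between the blocks that the $\ell_2$-$\ell_1$ geometry imposes. Unbiasedness is routine: taking expectation over $i\sim p(w)$ in the $x$-block yields $\sum_i p_i(w)\, A_{i:}([w\y]_i-[w_0\y]_i)/p_i(w) = A^\top(w\y-w_0\y)$, which when added to $g(w_0)\x = A^\top w_0\y$ gives $A^\top w\y = g(w)\x$. The $y$-block calculation is analogous using $q(w)$. Both calculations are insensitive to the particular choice of distributions, as long as they are positive on the support of $w\y-w_0\y$ and $w\x-w_0\x$ respectively; this is how we have the freedom to pick~\eqref{eq:tgdef-l2-probs}.

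For the variance bound, I would split the dual norm as $\|\tilde g_{w_0}(w)-g(w_0)\|_*^2 = \|\tilde g_{w_0}(w)\x - g(w_0)\x\|_2^2 + \|\tilde g_{w_0}(w)\y - g(w_0)\y\|_\infty^2$ and handle each block separately using~\eqref{eq:tgdef-l2}. For the $x$-block, the estimator error equals $A_{i:}\,\sign([w\y-w_0\y]_i)\,\|w\y-w_0\y\|_1$, so its squared $\ell_2$-norm is $\|A_{i:}\|_2^2\,\|w\y-w_0\y\|_1^2$. Taking expectation over $i\sim p(w)$ (so $p_i(w)\propto|[w\y-w_0\y]_i|$) yields $\|w\y-w_0\y\|_1 \sum_i |[w\y-w_0\y]_i|\,\|A_{i:}\|_2^2 \le \nA^2\,\|w\y-w_0\y\|_1^2$. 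The crucial observation is that $\nA^2 = \max_i \sum_j A_{ij}^2 \le \sum_j \max_i A_{ij}^2 = \sum_j \|\aj\|_\infty^2 = L^2$ by the trivial inequality $\max\sum\le\sum\max$, so this contribution is bounded by $L^2\,\|w\y-w_0\y\|_1^2$.

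For the $y$-block, the estimator error equals $-A_{:j}\,\|w\x-w_0\x\|_2^2/[w\x-w_0\x]_j$, whose squared $\ell_\infty$-norm is $\|\aj\|_\infty^2\,\|w\x-w_0\x\|_2^4/[w\x-w_0\x]_j^2$. Taking expectation over $j\sim q(w)$ with $q_j(w)\propto([w\x-w_0\x]_j)^2$, the factors of $[w\x-w_0\x]_j^2$ cancel exactly, leaving $\|w\x-w_0\x\|_2^2 \sum_j \|\aj\|_\infty^2 = L^2\,\|w\x-w_0\x\|_2^2$. Summing both contributions gives $L^2(\|w\x-w_0\x\|_2^2 + \|w\y-w_0\y\|_1^2) = L^2\,\|w-w_0\|^2$, as required.

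The main design challenge to highlight is the choice of squared-difference sampling $q_j(w)\propto([w\x-w_0\x]_j)^2$ on the $\xset$-block, rather than absolute-difference sampling as in the $\ell_1$-$\ell_1$ case. This is precisely what is needed to cancel the $1/[w\x-w_0\x]_j^2$ factor that arises from the importance-weighted estimator when measured in the $\ell_\infty$ dual norm, and it also explains why the sampling distributions must differ on the two blocks. The remaining loss in the resulting Lipschitz constant $L=\sqrt{\sum_j\|\aj\|_\infty^2}$ (compared with the true Lipschitz constant $\nA$) comes entirely from the $\max_i$-versus-$\sum_j$ slack in the $x$-block analysis, and is what motivates the more refined local-norms analysis and gradient clipping developed in the next subsection.
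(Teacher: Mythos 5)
Your proof is correct and follows essentially the same route as the paper: unbiasedness is immediate from the form~\eqref{eq:tgdef}, and the variance bound is obtained by treating the two dual-norm components separately, using the squared-difference distribution $q$ to cancel the importance-weight denominator on the $\yset$-block, and the max-vs-sum inequality $\nA^2 = \max_i\sum_j A_{ij}^2 \le \sum_j\max_i A_{ij}^2 = L^2$ to coarsen $\nA$ to $L$ on the $\xset$-block. The one cosmetic difference is on the $\xset$-block: the paper observes that $\ltwo{\ai}^2\lone{w\y-w_0\y}^2 \le L^2\lone{w\y-w_0\y}^2$ holds deterministically for every $i$, so no expectation is needed there, whereas you compute the expectation over $i\sim p(w)$ explicitly. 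Both are valid and land in the same place.

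One small correction to your closing diagnosis, however. The $\xset$-block bound is actually tight at $\nA^2\lone{w\y-w_0\y}^2$, and the inequality $\nA^2 \le L^2$ is used there only to report a single uniform constant. The genuine inflation from $\nA$ to $L=(\sum_j\linf{\aj}^2)^{1/2}$ is caused by the $\yset$-block, where $\E\,\linf{\tilde{g}_{w_0}\y(w)-g\y(w_0)}^2 = L^2\ltwo{w\x-w_0\x}^2$ holds with \emph{equality}; the factor $\sum_j\linf{\aj}^2$ is forced by the $\E\,\linf{\cdot}^2$ structure. As the paper explains in \Cref{ssec:l2l1-clip}, swapping the order of expectation and maximum (i.e., bounding $\max_k\E\,[\cdot]_k^2$ rather than $\E\,\max_k[\cdot]_k^2$) recovers $\nA^2$, and that $\yset$-block observation---not a fix to the $\xset$-block---is what the local-norms analysis with gradient clipping is designed to exploit.
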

\begin{proof}
	The estimator is unbiased since it is of the form~\eqref{eq:tgdef}. To 
	show the variance bound, first consider the $\xset$-block. We have
	\begin{equation}\label{eq:l2l1-centered-x}
	\ltwo{\tilde{g}_{w_0}\x(w) - g\x(w_0)}^2 
	= 
	\ltwo{\ai}^2\lones{w\y-w_0\y}^2
	\le
	\nA^2 \lones{w\y-w_0\y}^2
	\le 
	L^2 \lones{w\y-w_0\y}^2,
	\end{equation}
	where we used $\nA^2 = \max_{i\in [n]} \ltwo{\ai}^2 \le 
	\sum_{j\in[m]} \linf{\aj}^2 = L^2$. Second, for the $\yset$-block,
	\begin{equation}\label{eq:l2l1-centered-naive-y}
	\E \linf{\tilde{g}_{w_0}\y(w) - g\y(w_0)}^2 
	= \sum_{j\in [n]}\frac{\linf{\aj}^2[w\x-w_0\x]_j^2}{q_j(w)} 
	= L^2 \ltwo{w\x-w_0\x}^2.
	\end{equation}
	Combining~\eqref{eq:l2l1-centered-x} 
	and~\eqref{eq:l2l1-centered-naive-y}, we have the second property 
	$\E \norm{\tilde{g}_{w_0}(w) - g(w_0)}_*^2 \le L^2\norm{w-w_0}^2$. 
\end{proof}

\subsubsection{Improved gradient estimator}\label{ssec:l2l1-clip}

The constant $L$ in Lemma~\ref{lem:l2-gradient-est-naive} is larger than 
the Lipschitz constant of $g$ (i.e.\ $\nA$) by a factor of up to $\sqrt{n}$. 
Consequently, a variance reduction scheme based on the 
estimator~\eqref{eq:tgdef-l2} will not always improve on the linear-time 
mirror prox method. %

 Inspecting the proof of 
Lemma~\ref{lem:l2-gradient-est-naive}, we see that the cause for the 
inflated value of $L$ is the bound~\eqref{eq:l2l1-centered-naive-y} on $\E 
\linf{\tilde{g}_{w_0}\y(w) - g\y(w_0)}^2$. We observe that swapping the 
order of expectation and maximization would solve the problem, as 
\begin{equation}\label{eq:l2l1-centered-y-swap}
\max_{k \in [m]} \E \, [\tilde{g}_{w_0}\y(w) - g\y(w_0)]_k^2 
= \max_{k \in [m]}  \sum_{j\in [n]}\frac{A_{kj}^2[w\x-w_0\x]_j^2}{q_j(w)} 
=  \nA^2 \ltwo{w\x-w_0\x}^2.
\end{equation}
Moreover, inspecting the proof 
of Proposition~\ref{prop:innerloopproof} reveals that instead of bounding 
terms of the form
$\E 
\linf{\tilde{g}_{w_0}\y(w_t) - g\y(w_0)}^2$ we may directly bound 
$\E \left[\eta \inners{\tilde{g}_{w_0}\y(w_{t}) - g\y(w_0)}{y_{t}-y_{t+1}} - 
V_{y_{t}}(y_{t+1})\right]$, where we write $w_t = (x_t, y_t)$ and recall that 
$\eta$ is the step-size in Algorithm~\ref{alg:innerloop}. Suppose that 
$\eta\linf{\tilde{g}_{w_0}\y(w_{t}) - g\y(w_0)}\le 1$ holds. In this case 
we 
may use a ``local norms'' bound (Lemma~\ref{lem:local-norms-classical} 
in 
Appendix~\ref{app:entropy-props}) to write
\begin{equation*}
\eta \inners{\tilde{g}_{w_0}\y(w_t) - g\y(w_0)}{y_{t}-y_{t+1}} 
- V_{y_{t}}(y_{t+1})
\le 
\eta^2 \sum_{k\in[m]} [y_{t}]_k
[\tilde{g}_{w_0}\y(w_t) - g\y(w_0)]_k^2
\end{equation*}
and bound the expectation of the RHS 
using~\eqref{eq:l2l1-centered-y-swap} conditional on $w_t$. 

Unfortunately, the gradient estimator~\eqref{eq:tgdef-l2} does not always 
satisfy 
 $\eta\linf{\tilde{g}_{w_0}\y(w_{t}) - g\y(w_0)}\le 1$. 
Following~\citet{ClarksonHW10}, we enforce this bound by 
clipping the gradient estimates, yielding the estimator
\begin{equation}
\label{eq:tgdef-l2-dynamic}
\begin{aligned}
\tilde{g}_{w_0}(w)& \defeq 
\left(
A^\top w_0\y+\ai\frac{[w\y]_i-[w_0\y]_i}{p_i(w)},
-Aw_0\x-\trunc\left(\aj\frac{[w\x]_j-[w_0\x]_j}{q_j(w)}\right)
\right),\\
& \text{where }\left[\trunc\left(v\right)\right]_{i}=\begin{cases}
-\tau & [v]_{i}<-\tau\\
[v]_{i} & -\tau\le [v]_i\le\tau\\
\tau & [v]_{i}>\tau,
\end{cases}
\end{aligned}
\end{equation}
where $i\sim p(w)$ and $j\sim q(w)$ with $p,q$ as defined 
in~\eqref{eq:tgdef-l2-probs}. The clipping in~\eqref{eq:tgdef-l2-dynamic} 
does not significantly change the variance of the estimator, but it 
introduces 
some bias for which we must account.
We summarize the relevant properties of the clipped gradient estimator in 
the following.

\begin{restatable}{definition}{restateDefCBB}
	\label{def:z-l-cbb}
	Let $w_0=(w_0\x,w_0\y)\in\zset$ and $\tau,L>0$. 
	A stochastic gradient estimator 
	$\tilde{g}_{w_0}:\zset\rightarrow\zset^*$ 
	is called \emph{$(w_0,L,\tau)$-centered-bounded-biased (CBB)} if it satisfies for 
	all 
	$w=(w\x,w\y)\in\zset$,
	\begin{enumerate}
	\item $\E\tilde{g}\x_{w_0}(w) = g\x(w)$ and $\norm{\E 
	\tilde{g}\y_{w_0}(w) - g\y(w)}_* \le 
	\frac{L^2}{\tau}\norm{w-w_0}^2$,
	\item $\norm{\tilde{g}_{w_0}\y(w) - g\y(w_0)} _* \le \tau$,
	\item 
	$\E \norm{\tilde{g}_{w_0}\x(w) - 
		g\x(w_0)}_*^2 + \max_{i\in[m]} \E \, [\tilde{g}_{w_0}\y(w) - 
		g\y(w_0)]_i^2 \le L^2 
	\norm{w - w_0}^2$.
	\end{enumerate}
\end{restatable}
\begin{restatable}{lemma}{restateClipped}
	\label{lem:l2-gradient-est-dynamic}
	In the $\ell_2$-$\ell_1$ setup, the 
	estimator~\eqref{eq:tgdef-l2-dynamic} is $(w_0,L,\tau)$-CBB with $L = 
	\nA$.
\end{restatable}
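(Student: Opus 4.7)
\textbf{Proof plan for Lemma~\ref{lem:l2-gradient-est-dynamic}.} The plan is to verify the three properties of Definition~\ref{def:z-l-cbb} in turn, reusing the computations from the proof of Lemma~\ref{lem:l2-gradient-est-naive} and treating the entrywise clipping as a small perturbation whose bias and variance I must bound. Throughout the argument let $v = \aj [w\x - w_0\x]_j / q_j(w)$ for $j \sim q(w)$, so that $\tilde{g}\y_{w_0}(w) = g\y(w_0) - \trunc[\tau](v)$; from the definition of $q_j(w)$ the unclipped version satisfies $\E v = A(w\x - w_0\x) = g\y(w_0) - g\y(w)$.

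Property (2) is immediate from the definition of $\trunc[\tau]$, since every entry of $\tilde{g}_{w_0}\y(w) - g\y(w_0) = -\trunc[\tau](v)$ is at most $\tau$ in absolute value. The unbiasedness of the $\xset$-block in property (1) is also immediate, since the $x$-block is untouched by clipping and inherits unbiasedness from the estimator~\eqref{eq:tgdef-l2}. For the $\yset$-block bias in property (1), I will use the pointwise inequality $|[v]_k - \trunc[\tau](v)_k| \le [v]_k^2 / \tau$ (valid because $|v_k| > \tau$ forces $|v_k| \le v_k^2/\tau$), which yields $\|\E \tilde{g}_{w_0}\y(w) - g\y(w)\|_\infty \le \max_k \E [v]_k^2 / \tau$. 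A direct computation exploiting the definition of $q_j(w)$ to cancel the $[w\x - w_0\x]_j^2$ factor gives
\[
\E [v]_k^2 = \sum_j A_{kj}^2 \|w\x - w_0\x\|_2^2 = \ltwo{\ai[k]}^2 \|w\x - w_0\x\|_2^2 \le \nA^2 \|w - w_0\|^2,
\]
delivering the claimed bias bound $L^2 \|w - w_0\|^2 / \tau$ with $L = \nA$.

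For property (3) the $\xset$-block contribution is identical to~\eqref{eq:l2l1-centered-x} in the proof of Lemma~\ref{lem:l2-gradient-est-naive} and gives $\nA^2 \lones{w\y - w_0\y}^2$. For the $\yset$-block I will exploit the crucial fact that Definition~\ref{def:z-l-cbb} asks for $\max_k \E [\cdot]_k^2$ rather than $\E \max_k [\cdot]_k^2$: reusing the $\E [v]_k^2$ computation above and observing that $|\trunc[\tau](v)_k| \le |v_k|$ so clipping only reduces the variance, I obtain $\max_k \E [\tilde{g}_{w_0}\y(w) - g\y(w_0)]_k^2 \le \nA^2 \|w\x - w_0\x\|_2^2$, and summing the two block contributions yields the bound with $L = \nA$. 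None of the individual computations pose a serious obstacle; the conceptual crux is that the swap of maximum and expectation built into the CBB definition replaces the loose $\sum_j \linf{\aj}^2$ estimate used in Lemma~\ref{lem:l2-gradient-est-naive} by the tight Lipschitz constant $\nA^2$, at the price of the clipping-induced bias controlled in property (1).
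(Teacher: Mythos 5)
Your proof is correct and follows essentially the same structure as the paper's: verify the three CBB properties directly, reusing the computations from the unclipped estimator's analysis and controlling the clipping-induced bias and variance separately. The only genuine difference is cosmetic: for the bias bound, the paper restricts the sum to the set of clipped indices $\mathcal{J}_\tau(i)$, applies $|a-\trunc(a)|\le|a|$ there, and then uses the clipping condition to bound $|[w\x]_j-[w_0\x]_j|$; you instead package all of that into the single pointwise inequality $|a-\trunc[\tau](a)|\le a^2/\tau$ and compute $\E v_k^2$ directly. Both routes land on $\|\ai[k]\|_2^2\ltwo{w\x-w_0\x}^2/\tau$, so they are equivalent — yours is arguably a slightly cleaner consolidation of the same case analysis. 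Properties (2) and (3) are handled exactly as in the paper.
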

\begin{proof}
	The $\xset$ component for the gradient estimator is unbiased.
	We bound the bias in the 
	$\yset$ block as follows.
	Fixing an index $i\in[m]$, we have
	\begin{align*}
	\left|\Ex{}\left[ 
	\tilde{g}_{w_0}\y\left(w\right)-g\y\left(w\right)\right]_{i}\right|
	& 
	=\left|\E_{j}\left[ A_{ij}\frac{[w\x]_j-[w_0\x]_j}{q_{j}} 
	-\trunc\left(A_{ij}\frac{[w\x]_j-[w_0\x]_j}{q_{j}}\right)\right]\right|\\
	& 
	\le\sum_{j\in\mathcal{J}_{\tau}\left(i\right)} q_{j} 
	\left|A_{ij}\frac{[w\x]_j-[w_0\x]_j}{q_{j}} 
	-\trunc\left(A_{ij}\frac{[w\x]_j-[w_0\x]_j}{q_{j}}\right)\right|\\
	& 
	\le\sum_{j\in\mathcal{J}_{\tau}\left(i\right)}\left|A_{ij}\right|\left|[w\x]_j-[w_0\x]_j\right|
	\end{align*}
	where the last transition used $\left|a-\trunc (a)\right| \le  
	\left|a\right|$ 
	for all $a$, and
	\[
	\mathcal{J}_{\tau}\left(i\right)=\left\{ j\in[n]\mid 
	\trunc\left(A_{ij}\frac{[w\x]_j-[w_0\x]_j}{q_{j}}\right)\ne 
	A_{ij}\frac{[w\x]_j-[w_0\x]_j}{q_{j}}\right\} .
	\]
	Note that $j\in\mathcal{J}_{\tau}\left(i\right)$ if and only if
	\[
	\left|A_{ij}\frac{[w\x]_j-[w_0\x]_j}{q_{j}}\right| = \frac{\left\Vert 
		w\x-w_0\x\right\Vert 
		_{2}^{2}\left|A_{ij}\right|}{\left|[w\x]_j-[w_0\x]_j\right|}>\tau\Rightarrow\left|[w\x]_j-[w_0\x]_j\right|\le\frac{1}{\tau}\left\Vert
	w\x-w_0\x\right\Vert _{2}^{2}\left|A_{ij}\right|.
	\]
	Therefore,
	\[
	\sum_{j\in\mathcal{J}_{\tau}\left(i\right)}\left|A_{ij}\right|\left|[w\x]_j-[w_0\x]_j\right|\le\frac{1}{\tau}\left\Vert
	w\x-w_0\x\right\Vert 
	_{2}^{2}\sum_{j\in\mathcal{J}_{\tau}\left(i\right)}\left|A_{ij}\right|^{2} \le 
	\frac{1}{\tau}\left\Vert
	w\x-w_0\x\right\Vert _{2}^{2}\left\Vert \ai\right\Vert_2 ^{2}
	\]
	and $\linf{\E \tilde{g}_{w_0}\y(w) - g\y(w)} \le 
	\frac{L^2}{\tau}\ltwo{w\x-w_0\x}^2$ follows by taking the maximum 
	over $i\in[m]$. 
	
	The property $\linf{\tilde{g}_{w_0}\y(w)-g\y(w_0)}\le \tau$ follows 
	directly from the definition of $\trunc$.
	Finally, we note that for all $k$, the addition of $\trunc$ never increases  
	$[\tilde{g}_{w_0}\y(w) - g\y(w_0)]_k^2$, and so the third property 
	follows 
	from~\eqref{eq:l2l1-centered-y-swap} and~\eqref{eq:l2l1-centered-x}.
\end{proof}

To guarantee $\eta\linf{\tilde{g}_{w_0}\y(w_{t}) - g\y(w_0)}\le 1$, we 
set the threshold $\tau$ to be $1/\eta$. By the first property 
in Definition~\ref{def:z-l-cbb}, the bias caused by this choice of $\tau$ is 
of the order of the variance of the estimator, and we may therefore cancel it 
 with the regularizer by choosing $\eta$ slightly smaller than in 
Proposition~\ref{prop:innerloopproof}. In 
Appendix~\ref{app:cbb} we prove (using the observations from the 
preceding discussion) that Algorithm~\ref{alg:innerloop} with a CBB 
gradient estimator implements a relaxed proximal oracle.
\begin{restatable}{proposition}{restateInnerLoopCBB}
	\label{prop:innerloop-cbb}
	In the $\ell_2$-$\ell_1$ setup, let $\alpha,L>0$, let $w_0\in\zset$ and 
	let $\tilde{g}_{w_0}$ be 
	$(w_0,L,\frac{24 L^2}{\alpha})$-CBB for monotone and $L$-Lipschitz 
	$g$. 
	Then, for  
	$\eta = 
	\frac{\alpha}{24 L^2}$ and 
	$T \ge  
	\frac{4}{\eta\alpha}=\frac{96 L^2}{\alpha^2}$, the iterates of 
	Algorithm~\ref{alg:innerloop} satisfy the 
	bound~\eqref{eq:innerloop-guarantee}. Moreover, for $g(z)=(A^\top z\y, 
	-Az\x)$, 
	$\mc{O}(w_0)=\InnerLoop(w_0, \tilde{g}_{w_0}, \alpha)$ is an 
	$(\alpha,0)$-relaxed proximal oracle.
\end{restatable}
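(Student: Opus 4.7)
The plan is to adapt the proof of Proposition~\ref{prop:innerloopproof} to accommodate the clipping bias and the weaker (max-over-coordinates) variance bound for the $\yset$-block. First, I would split the per-iteration regret into its $\xset$- and $\yset$-block contributions. For the $\xset$-block, the Euclidean setting is unaffected by clipping and the estimator is unbiased, so I apply the standard composite mirror descent argument, bounding the expectation of $\frac{\eta}{2}\ltwo{\tilde{g}\x_{w_0}(w_t) - g\x(w_0)}^2$ by $\frac{\eta L^2}{2}\norm{w_t - w_0}^2$ via CBB property~3. For the $\yset$-block, the choice $\tau = 1/\eta = 24L^2/\alpha$ together with CBB property~2 guarantees deterministically that $\eta\linf{\tilde{g}\y_{w_0}(w_t) - g\y(w_0)} \le 1$, which lets me replace the standard dual-norm regret bound by the negative-entropy local-norms inequality (Lemma~\ref{lem:local-norms-classical}), yielding a one-step bound
\[
\eta\inners{\tilde{g}\y_{w_0}(w_t) - g\y(w_0)}{y_t - y_{t+1}} - V_{y_t}(y_{t+1}) \le \eta^2 \sum_k [y_t]_k [\tilde{g}\y_{w_0}(w_t) - g\y(w_0)]_k^2.
\]

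Next, I take conditional expectations. Since $y_t$ is measurable before drawing the random index defining $\tilde{g}\y_{w_0}(w_t)$, I can pull $[y_t]_k$ out and apply the coordinate-wise piece of CBB property~3 to obtain $\E\sum_k [y_t]_k [\tilde{g}\y_{w_0}(w_t) - g\y(w_0)]_k^2 \le L^2\norm{w_t - w_0}^2$, using $\sum_k [y_t]_k = 1$. I then handle the clipping bias $b_t \defeq \E\tilde{g}\y_{w_0}(w_t) - g\y(w_t)$ as a separate deterministic term: CBB property~1 gives $\linf{b_t} \le \frac{L^2}{\tau}\norm{w_t-w_0}^2$, so $\inner{b_t}{y_t - u\y} \le 2\linf{b_t}$ since $y_t, u\y\in\Delta^m$. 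Combining everything, the per-step ``stochastic noise plus bias'' contribution is bounded by a constant times $(\eta L^2 + L^2/\tau)\norm{w_t-w_0}^2$, which with $\eta = \alpha/(24L^2)$ and $\tau = 1/\eta$ is at most $\frac{\alpha}{2}V_{w_0}(w_t)$ and is absorbed by the composite regularizer, exactly as in~\eqref{eq:stoch-reg-bound}. The initial-distance term $V_{w_0}(u)/\eta$ is then absorbed into $\alpha V_{w_0}(u)$ on the oracle inequality's right-hand side by $T\ge 4/(\eta\alpha)$.

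Finally, to swap $\max_u$ with $\E$, I reuse the ghost-iterate construction of~\cite{NemirovskiJLS09}, decomposing the $\yset$-block deviation $\tilde{g}\y_{w_0}(w_t) - g\y(w_t)$ into a mean-zero part $\tilde{g}\y_{w_0}(w_t) - \E\tilde{g}\y_{w_0}(w_t)$ and the deterministic bias $b_t$. The bias $b_t$ requires no ghost iterates; the mean-zero part is handled by an auxiliary sequence $s_t$, whose regret I analyze with a second application of the entropy local-norms bound, which is valid because the mean-zero part is also bounded in $\ell_\infty$ by $2\tau$. The relaxed proximal oracle conclusion in the bilinear case then follows verbatim from Corollary~\ref{cor:innerloop-oracle}, since that argument only uses $\inner{g(w)}{w}=0$ and $\inner{g(z)}{w}=-\inner{g(w)}{z}$. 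The main obstacle is the careful bookkeeping of the local-norms term: one must correctly condition on $w_t$ before invoking CBB property~3's coordinate-wise second-moment bound, and one must track the slightly looser constants ($24$ and $96$ in place of $10$ and $40$) that arise from balancing the variance, local-norms, and bias terms simultaneously.
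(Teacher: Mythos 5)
Your overall plan closely mirrors the paper's: use the sharper (first) form of the mirror-descent regret bound so the $\yset$-block contribution appears as $\eta\inners{\tilde{\delta}_t\y}{y_t - y_{t+1}} - V_{y_t}(y_{t+1})$, invoke Lemma~\ref{lem:local-norms-classical} (valid because $\eta\tau=1$ ensures $\linf{\eta\tilde{\delta}_t\y}\le 1$), bound the resulting weighted second moment via CBB property~3, account for the bias via CBB property~1, absorb everything into the $\frac{\alpha}{2}V_{w_0}(w_t)$ regularizer, and dispose of the maximum-vs-expectation swap via ghost iterates. The conclusion via Corollary~\ref{cor:innerloop-oracle} is indeed unaffected. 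Two aspects are genuinely different from the paper, one to your credit and one that breaks.

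On the bias, you decompose $g(w_t)-\tilde{g}_{w_0}(w_t)$ into a conditionally mean-zero piece plus the deterministic-given-$w_t$ bias $b_t$, and bound $\inner{b_t}{y_t-u\y}\le 2\linf{b_t}$ uniformly over $u$ since both arguments lie in $\Delta^m$. This is cleaner than the paper's route, which keeps $\tilde{\Delta}_t = g(w_t)-\tilde{g}_{w_0}(w_t)$ intact and absorbs the bias through $\E\inners{\tilde{\Delta}_t}{w_t - s_t}$ (bounded using $\lone{w_t\y-s_t\y}\le 2$). Both give the same $O(\eta L^2)\,\E V_{w_0}(w_t)$ contribution, but your version makes the interchange of $\max_u$ and $\E$ trivial for the bias part, which only leaves the mean-zero noise for the ghost construction.

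The gap is in the ghost-iterate regret. You claim the local-norms bound applies to the mean-zero piece $\tilde{\Delta}_t' = \E[\tilde{g}_{w_0}(w_t)\mid w_t] - \tilde{g}_{w_0}(w_t)$ because $\linfs{\tilde{\Delta}_t'{}\y}\le 2\tau$. That bound is correct, but it is not sufficient: Lemma~\ref{lem:local-norms-classical} requires coordinate-wise $\delta_i\le 1.79$, and with $\eta\tau=1$ you only get $\linfs{\eta\tilde{\Delta}_t'{}\y}\le 2 > 1.79$, so the premise fails. The paper avoids this by writing $\tilde{\Delta}_t$ as a $\tfrac{2}{3}$--$\tfrac{1}{3}$ convex combination of $\tfrac{3}{2}\eta\tilde{\delta}_t$ (whose $\yset$-block is bounded by $\tfrac{3}{2}<1.79$, so the local-norms lemma applies) and $3\eta(g(w_t)-g(w_0))$ (handled by the standard Young/strong-convexity bound using the assumed $L$-Lipschitzness of $g$). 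To repair your argument you need a similar convex split, or a strictly smaller ghost-iterate step size, or to change the parameters so that $\eta\tau < 1.79/2$; any of these changes the constants, and note that the split routes actually \emph{require} the Lipschitz hypothesis on $g$ that appears in the proposition statement but never gets used in your sketch.
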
 
\noindent
We remark that the proof of Proposition~\ref{prop:innerloop-cbb} relies on 
the structure of the simplex with negative entropy as the distance 
generating function. For this reason, we state the proposition for the 
$\ell_2$-$\ell_1$ setup. However, Proposition~\ref{prop:innerloop-cbb} 
would also hold for other setups where $\yset$ is the simplex and $r\y$ is 
the negative entropy, provided a CBB gradient estimator is available.
}

\arxiv{\presentationTwoOne}
\notarxiv{
	\paragraph{Gradient estimator.}
	To account for the fact that $\xset$ is now the $\ell_2$ unit ball, we 
	modify the sampling distribution $q$ in~\eqref{eq:l1-prob-def} to 
	$q_j(w)=\frac{([w\x]_j-[w_0\x]_j)^2}{\left\Vert w\x - 
		w_0\x\right\Vert^2_{2}}$, and keep $p$ the same. 
	As we explain in detail in~\Cref{ssec:l2l1-basic}, substituting these 
	probabilities into the expression~\eqref{eq:tgdef} yields a centered 
	gradient estimator with a constant $(\sum_{j\in[n]} \linf{\aj}^2)^{1/2}$ 
	that is larger than $\nA$ by a factor of up to $\sqrt{n}$. Using local 
	norms analysis allows us to tighten these bounds whenever the 
	stochastic steps have bounded infinity norm. 
	Following~\citet{ClarksonHW10}, we enforce such bound on the step 
	norms via gradient clipping. 
	The final gradient estimator is
	\begin{equation*}
	\begin{aligned}
	\tilde{g}_{w_0}(w)& \defeq 
	\left(
	A^\top w_0\y+\ai  \frac{\lones{w\y - w_0\y}}{\sign([w\y-w_0\y]_i)},
	-Aw_0\x-\trunc\left(\aj\frac{\ltwo{w\x-w_0\x}^2}{[w\x]_j-[w_0\x]_j}\right)
	\right),\\
	& \text{where }\left[\trunc\left(v\right)\right]_{i}=\begin{cases}
	-\tau & [v]_{i}<-\tau\\
	[v]_{i} & -\tau\le [v]_i\le\tau\\
	\tau & [v]_{i}>\tau,
	\end{cases}
	\end{aligned}
	\end{equation*}
	
	The clipping operation $\trunc$ introduces bias to the gradient 
	estimator, which we account for by carefully choosing a value of $\tau$ 
	for which the bias is on the same order as the variance, and yet the 
	resulting steps are appropriately bounded; 
	see~\Cref{ssec:l2l1-clip}. In~\Cref{ssec:l1l2-add} we describe an 
	alternative gradient 
	estimator for which the distribution $q$ does not depend on the current 
	iterate $w$.
}

\arxiv{
\subsubsection{Full algorithm and complexity analysis}
}
\notarxiv{
\paragraph{Runtime bound.}
}

\newcommand{\algTwoOne}{
\begin{algorithm} %
	\DontPrintSemicolon
	\KwInput{Matrix $A\in\R^{m\times n}$ with $i$th row $\ai$ and $j$th 
		column $\aj$, target accuracy $\epsilon$}
	\KwOutput{A point with expected duality gap below $\epsilon$}
	
	\vspace{3pt}
	\parbox{1.1\textwidth}{
	$L\gets\nA$,~$\alpha \gets 
	L\sqrt{\frac{n+m}{\nnz(A)}}$,~$K\gets 
	\ceil{\frac{\log(2m)\alpha}{\epsilon}}$,~$\eta \gets 
	\frac{\alpha}{24 L^2}$,~$\tau\gets 
	\frac{1}{\eta}$,~$T\gets\ceil{\frac{4}{\eta\alpha}}$, 
	$(x_0, y_0) \gets (\boldsymbol{0}_n, 
	\tfrac{1}{m}\boldsymbol{1}_m)$
	} %

	\vspace{3pt}
	\For{$k=1,\ldots,K$}
	{
		\vspace{3pt}%
		\Comment{\emph{Relaxed oracle query:}}
		
		$\displaystyle (x_0, y_0) \gets (z_{k-1}\x,z_{k-1}\y)$, $(g\x_0, g\y_0) 
		\gets (A^\top y_0, -A x_0)$\;
		
		\vspace{3pt}
		
		\For{$t=1,\ldots,T$}
		{
			\vspace{3pt}%
			\Comment{\emph{Gradient estimation:}}
			\vspace{-4pt}%
			
			Sample $i\sim p$ where $\displaystyle p_i = \frac{\left| [y_{t-1}]_i 
				- [y_{0}]_i \right|}{\lone{y_{t-1}-y_0}}$, sample $j\sim q$ where 
			$\displaystyle q_j = \frac{ \left( [x_{t-1}]_j - [x_{0}]_j 
				\right)^2}{\ltwo{x_{t-1}-x_0}^2}$\;
			
			Set $\displaystyle\tilde{g}_{t-1} = g_0 +  \left(
			\ai \frac{[y_{t-1}]_i - [y_{0}]_i}{p_i}, 
			-\trunc\left(\aj \frac{[x_{t-1}]_j - [x_{0}]_j}{q_j}\right)\right)$
			\;
			\Comment*[f]{$[\trunc(v)]_k \defeq \min\{\tau, \max\{-\tau, 
			[v]_k\}\}$}

			\Comment{\emph{Mirror descent step:}}
			
			$\displaystyle x_t \gets \Pi_{\xset}\left( \frac{1}{1+\eta\alpha/2} 
			\left(  x_{t 
				- 1} + \frac{\eta\alpha}{2} x_0 - \eta \tilde{g}_{t-1}\x 
			\right)\right)$
			\Comment*[f]{$\Pi_{\xset}(v)=\frac{v}{\max\{1,\ltwo{v}\}}$}
			\;

			$\displaystyle y_t \gets \Pi_{\yset}\left( \frac{1}{1+\eta\alpha/2} 
			\left(
			\log y_{t - 1} + 
			\frac{\eta\alpha}{2}\log 
			y_0 - \eta \tilde{g}_{t-1}\y
			\right)\right)$
			\Comment*[f]{$\Pi_{\yset}(v)=\frac{e^v}{\lone{e^v}}$}
			\; 
		}
		$\displaystyle z_{k - 1/2} \gets \frac{1}{T}\sum_{t=1}^T (x_t, y_t)$\;

		\Comment{\emph{Extragradient step:}}
		
		$\displaystyle z_k\x \gets \Pi_{\xset}\left( z_{k-1}\x 
		-\tfrac{1}{\alpha} A^\top z_{k-1/2}\y \right)$, 
		$\displaystyle z_k\y \gets \Pi_{\yset}\left(\log z_{k-1}\y 
		+\tfrac{1}{\alpha} A z_{k-1/2}\x \right)$\;

	}
	\Return $\displaystyle \frac{1}{K}\sum_{k=1}^K z_{k-1/2}$
	\caption{Variance reduction for $\ell_2$-$\ell_1$ games}
	\label{alg:l1l2}
\end{algorithm}
}
\arxiv{\algTwoOne

With the improved gradient estimator and its analysis established, we  
combine it with our framework in Section~\ref{sec:framework} and obtain a 
complete variance reduction algorithm for $\ell_2$-$\ell_1$ games; 
Algorithm~\ref{alg:l1l2} is the result. It 
enjoys the following performance guarantee. 
}%
\notarxiv{
Algorithm~\ref{alg:l1l2} in~\Cref{app:l2l1-alg} combines our clipped 
gradient estimator with our general variance reduction framework. The 
analysis in~\Cref{app:cbb} gives the following guarantee.
}

\begin{theorem}
	\label{thm:l1l2}
	Let $A\in\R^{m\times n}$, $\epsilon > 0$, and any $\alpha \geq 
	\epsilon / \log(2m)$. 
	Algorithm~\ref{alg:l1l2} outputs a point 
	$z=(z\x,z\y)$ such that
	\arxiv{
	\begin{equation*}
	\E\, \left[\max_{y\in\Delta^m} y^\top A z\x - \min_{x\in\ball^n} 
	(z\y)^\top A x \right]
	=
	\E \left[\max_{i}\, {[Az\x]}_i + \ltwos{A^\top 
	z\y}\right] \le \epsilon,
	\end{equation*}
	}%
	\notarxiv{
	$\E\, \left[\max_{y\in\Delta^m} y^\top A z\x - \min_{x\in\ball^n} 
	(z\y)^\top A x \right]
	=
	\E \left[\max_{i}\, {[Az\x]}_i + \ltwos{A^\top 
		z\y}\right] \le \epsilon$,
	}
	and runs in time 
	\begin{equation}\label{eq:l1l2-time}
	O\left(\left(\nnz(A) + \frac{(m + n)\nA^2}{\alpha^2} 
	\right)\frac{\alpha\log(2m)}{\epsilon}\right).
	\end{equation}
	Setting $\alpha$ optimally, the running time is 
	\begin{equation}\label{eq:l1l2-time-opt}
	O\left(\nnz(A) + \frac{\sqrt{\nnz(A) (m + n)} 
		\nA\log(2m)}{\epsilon}\right).
	\end{equation}

\end{theorem}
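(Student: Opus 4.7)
The plan is to follow the same three-step structure used in the proof of Theorem~\ref{thm:l1l1}: first establish that the inner loop with our clipped estimator implements an $(\alpha,0)$-relaxed proximal oracle, second apply Proposition~\ref{prop:outerloopproof} with the appropriate range bound on $r$ to get the gap guarantee, and third bound the per-iteration cost.

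For the oracle property, I would invoke Lemma~\ref{lem:l2-gradient-est-dynamic}, which establishes that the clipped estimator used in Algorithm~\ref{alg:l1l2} is $(w_0, L, \tau)$-CBB with $L = \nA$. Since the algorithm sets $\eta = \alpha/(24L^2)$ and $\tau = 1/\eta = 24L^2/\alpha$ (matching the hypothesis of Proposition~\ref{prop:innerloop-cbb}) and $T \ge 4/(\eta\alpha) = 96L^2/\alpha^2$, Proposition~\ref{prop:innerloop-cbb} directly certifies that $\mathcal{O}(w_0)=\InnerLoop(w_0,\tilde g_{w_0},\alpha)$ is an $(\alpha,0)$-relaxed proximal oracle for $g(z)=(A^\top z\y,-Az\x)$.

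For the gap bound, I would note that in the $\ell_2$-$\ell_1$ setup the distance generating function $r(z) = \tfrac12\ltwo{z\x}^2 + \sum_i [z\y]_i\log[z\y]_i$ has range $\Theta \le \tfrac12 + \log m \le \log(2m)$. Plugging into Proposition~\ref{prop:outerloopproof} gives $\E\,\gap(\bar z_K) \le \alpha\Theta/K \le \epsilon$ provided $K \ge \alpha\log(2m)/\epsilon$; the equality between the gap and the explicit expression $\max_i [Az\x]_i + \ltwo{A^\top z\y}$ is just the calculation of the inner maxima over $\Delta^m$ and $\ball^n$.

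For the runtime, I would observe that the only lines in Algorithm~\ref{alg:l1l2} touching the full matrix are the initialization of $(g\x_0, g\y_0)$ and the extragradient step, each costing $O(\nnz(A))$ per outer iteration; every inner iteration samples two indices and performs $O(n+m)$ vector arithmetic (entropic and Euclidean projections included), and the clipping operator $\trunc$ adds only $O(n+m)$ work per step. Therefore the total cost is $O\bigl((\nnz(A) + (n+m)T)\cdot K\bigr)$, and substituting the values of $T$ and $K$ chosen in the algorithm yields \eqref{eq:l1l2-time}. The optimized runtime \eqref{eq:l1l2-time-opt} follows by setting $\alpha = \max\{\epsilon/\log(2m),\ \nA\sqrt{(n+m)/\nnz(A)}\}$, which balances the two terms inside the bracket.

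The main conceptual work has already been absorbed into Proposition~\ref{prop:innerloop-cbb} and Lemma~\ref{lem:l2-gradient-est-dynamic}; the only delicate point left is verifying that the ``local norms'' step-size $\eta = \alpha/(24L^2)$ and threshold $\tau = 1/\eta$ prescribed by Proposition~\ref{prop:innerloop-cbb} do not degrade the $\sqrt{\nnz(A)(n+m)}$ scaling after optimizing $\alpha$—this is immediate because these prescribed constants differ from those in the $\ell_1$-$\ell_1$ case only by an $O(1)$ factor, so the resulting $T$ is still $\Theta(L^2/\alpha^2)$. No other obstacles arise, and the theorem follows from a direct plug-in of the prior results.
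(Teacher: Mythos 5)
Your proposal is correct and takes essentially the same route as the paper's proof: the paper proves Theorem~\ref{thm:l1l2} by noting it is identical to Theorem~\ref{thm:l1l1} after swapping in Proposition~\ref{prop:innerloop-cbb} for Corollary~\ref{cor:innerloop-oracle}, using $L=\nA$ in place of $\norm{A}_{\max}$, and using $\Theta = \tfrac12+\log m \le \log(2m)$; you have simply unpacked those substitutions explicitly (oracle property from Lemma~\ref{lem:l2-gradient-est-dynamic} plus Proposition~\ref{prop:innerloop-cbb}, gap bound from Proposition~\ref{prop:outerloopproof} with $\Theta\le\log(2m)$, runtime $O((\nnz(A)+(n+m)T)K)$ with the optimal $\alpha$), all of which matches the paper.
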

\arxiv{
\begin{proof}
	The proof is identical to that of Theorem~\ref{thm:l1l1}, except 
	Proposition~\ref{prop:innerloop-cbb} replaces 
	Corollary~\ref{cor:innerloop-oracle}, $L$ is now $\nA$ instead of 
	$\norm{A}_{\max}$, and  $\Theta = \max_{z'}r(z')- \min_z r(z) = 
	\half+\log m \le \log(2m)$ rather than $\log(mn)$.
\end{proof}
}

\newcommand{\presentTwoTwo}{
\paragraph{Setup.} In the $\ell_2$-$\ell_2$ setup, both $\xset=\ball^n$ 
and 
$\yset=\ball^m$ 
are Euclidean unit balls, the norm over $\zset=\xset\times\yset$ is the 
Euclidean norm (which is dual to itself), and the distance generating 
function is $r(z)=\half\ltwo{z}^2$. Under the 
Euclidean norm, the Lipschitz constant of $g$ is $\norm{A}_{2\to2}$ (the 
largest singular value of $A$), and we also consider the Frobenius norm 
$\lfro{A}=(\sum_{i,j}A_{ij}^2)^{1/2}$, i.e.\ the Euclidean norm of the 
singular values of $A$. 

\begin{remark}
	In the $\ell_2$-$\ell_2$ setup, problems of the form 
	$\min_{x\in\ball^n}\max_{y\in\ball^m} y^\top A x$ are trivial, since the 
	saddle point is always the origin. However, as we explain in 
	Section~\ref{ssec:composite}, 
	our results extend to problems of the form 
	$\min_{x\in\ball^n}\max_{y\in\ball^m} \left\{y^\top A x + \phi(x) - 
	\psi(y)\right\}$ for convex functions $\phi,\psi$, e.g.\ 
	$\min_{x\in\ball^n}\max_{y\in\ball^m} \left\{y^\top A x + b^\top x +  
	c^\top y\right\}$, which are nontrivial.
\end{remark}

Our centered gradient estimator for the $\ell_2$-$\ell_2$ setup is of the 
form~\eqref{eq:tgdef}, where we sample from
\begin{equation}
\label{eq:l2l2-probs}
\begin{aligned}
p_i(w)=\frac{([w\y]_i-[w_0\y]_i)^2}{\left\Vert w\y - 
	w_0\y\right\Vert^2_{2}}~~\mbox{and}~~
\ q_j(w)=\frac{([w\x]_j-[w_0\x]_j)^2}{\left\Vert w\x - 
	w_0\x\right\Vert^2_{2}}.
\end{aligned}
\end{equation}
The resulting gradient estimator has the explicit form
\begin{equation}
\label{eq:tgdef-l2l2}
\tilde{g}_{w_0}(w) = g(w_0) + \left(
\ai \frac{\ltwos{w\y - w_0\y}^2}{[w\y-w_0\y]_i}, 
-\aj  \frac{\ltwos{w\x - w_0\x}^2}{[w\x-w_0\x]_j}\right).
\end{equation}

\begin{restatable}{lemma}{restateNaive}
	\label{lem:l2l2-gradient-est-dynamic}
	In the $\ell_2$-$\ell_2$ setup, the estimator~\eqref{eq:tgdef-l2l2} is 
	$(w_0,L)$-centered with $L = 
	\lfro{A}$.
\end{restatable}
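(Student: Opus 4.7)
The plan is to verify the two properties of Definition~\ref{def:w-l-centered} directly, exploiting the fact that in the $\ell_2$-$\ell_2$ setup the norm is Euclidean and self-dual, so that $\norm{\cdot}_* = \ltwo{\cdot}$ and the analysis decouples cleanly into the $\xset$-block and $\yset$-block contributions. The structure is essentially the same as Lemma~\ref{lem:l1-gradient-est}: plug in the explicit form of the estimator, take expectations with respect to $i\sim p(w)$ and $j\sim q(w)$, and use the fact that the probabilities in~\eqref{eq:l2l2-probs} are designed to cancel the $1/[w-w_0]$ factor when computing the first moment and to cancel one of the two factors when computing the second moment.

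First I would verify unbiasedness. For the $\xset$-block of $\tilde{g}_{w_0}(w) - g(w_0)$, the expectation over $i\sim p(w)$ is
\[
\E_i \left[\ai \frac{\ltwos{w\y - w_0\y}^2}{[w\y-w_0\y]_i}\right] = \sum_i \frac{([w\y]_i - [w_0\y]_i)^2}{\ltwos{w\y - w_0\y}^2}\cdot \ai \frac{\ltwos{w\y - w_0\y}^2}{[w\y-w_0\y]_i} = \sum_i ([w\y]_i - [w_0\y]_i)\,\ai = A^\top (w\y - w_0\y),
\]
so the $\xset$-block of $\E \tilde{g}_{w_0}(w)$ equals $A^\top w_0\y + A^\top(w\y - w_0\y) = A^\top w\y = g\x(w)$. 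The $\yset$-block argument is entirely symmetric, yielding $\E \tilde{g}_{w_0}(w) = g(w)$.

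Next I would bound the variance. Because the $i,j$ samples are independent, the second moment splits as a sum over the two blocks. For the $\xset$-block,
\[
\E_i \left\|\ai \frac{\ltwos{w\y - w_0\y}^2}{[w\y-w_0\y]_i}\right\|_2^2 = \sum_i \frac{([w\y]_i - [w_0\y]_i)^2}{\ltwos{w\y-w_0\y}^2}\cdot \ltwo{\ai}^2 \frac{\ltwos{w\y - w_0\y}^4}{([w\y-w_0\y]_i)^2} = \ltwos{w\y-w_0\y}^2 \sum_i \ltwo{\ai}^2,
\]
and $\sum_i \ltwo{\ai}^2 = \lfros{A}^2$ since $\ai$ runs over the rows of $A$. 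By the symmetric calculation for the $\yset$-block (which gives $\sum_j \ltwo{\aj}^2 = \lfros{A}^2$ since $\aj$ runs over the columns), adding the two contributions yields
\[
\E \norm{\tilde{g}_{w_0}(w) - g(w_0)}_*^2 = \lfros{A}^2 \left(\ltwos{w\y - w_0\y}^2 + \ltwos{w\x-w_0\x}^2\right) = \lfros{A}^2 \norm{w - w_0}^2,
\]
which is the required bound with $L = \lfro{A}$. There is no real obstacle here: the choice of sampling-from-the-squared-difference probabilities in~\eqref{eq:l2l2-probs} is precisely what makes the variance collapse to the Frobenius norm; the only point to be careful about is that $\tilde{g}_{w_0}(w)$ is finite with probability one (the denominators $[w\y-w_0\y]_i$ and $[w\x-w_0\x]_j$ are nonzero on the support of $p,q$), as in the $\ell_2$-$\ell_1$ case.
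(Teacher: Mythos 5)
Your proof is correct and follows essentially the same computation as the paper's: both verify unbiasedness directly from the estimator form and both show that the sampling probabilities in~\eqref{eq:l2l2-probs} cancel one factor of $([w-w_0]_\cdot)^2$ in the second moment, collapsing the sum over rows (resp.\ columns) to $\lfro{A}^2$ times the squared block distance. You spell out the unbiasedness step a bit more explicitly than the paper does, but the argument is the same.
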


\begin{proof}
	Unbiasedness follows from the estimator definition. The second property 
	follows from
	\begin{align*}
	\Ex{}\norm{\tilde{g}_{w_0}(w) - g(w_0)}_{2}^2
	 &= \sum\limits_{i \in [m]} 
	\frac{\norm{\ai}_2^2}{p_i}([w\y]_i-[w_0\y]_i)^2 + \sum\limits_{j\in[n]} 
	\frac{\norm{\aj}_2^2}{q_j}([w\x]_j-[w_0\x]_j)^2
	 \\&= \norm{A}_\mathrm{F}^2 \norm{w - w_0}_2^2.
	\end{align*}
\end{proof}

\notarxiv{
In~\Cref{ssec:l2l2-add} we provide two additional sampling distribution 
that yield estimators with the same guarantee. We may use these }%
\arxiv{We may use this }gradient estimator to build an algorithm with a 
convergence guarantee similar to Theorem~\ref{thm:l1l2}, except with 
$\lfro{A}$ instead of $\nA$ and $1$ instead of $\log(2m)$. This result 
improves the runtime of~\citet{BalamuruganB16} by a 
$\log({1}/{\epsilon})$ factor. However, as we discuss in 
Section~\ref{ssec:related}, unlike our $\ell_1$-$\ell_1$ and 
$\ell_2$-$\ell_1$ results, it is not a strict improvement over the 
linear-time mirror-prox method, which in the $\ell_2$-$\ell_2$  setting 
achieves running time $O( \norm{A}_{2\to2} \nnz(A) \epsilon^{-1})$. The 
regime in which our variance-reduced method has a stronger guarantee 
than mirror-prox is
\begin{equation*}
\mathrm{srank}(A) \defeq \frac{\lfro{A}^2}{\norm{A}_{2\to2}^2} \ll 
\frac{\nnz(A)}{n+m},
\end{equation*}
i.e.\ when the spectral sparsity of $A$ is significantly greater than its 
spatial sparsity.

We remark that $\ell_2$-$\ell_2$ games are closely related to linear 
regression, as
\begin{equation*}
\min_{x\in\ball^n} \ltwo{Ax-b}^2 = 
\left(
\min_{x\in\ball^n} 
\max_{y\in\ball^m} \big\{ y^\top A x - y^\top b \big\}
\right)^2.
\end{equation*}
The smoothness of the objective $\ltwo{Ax-b}^2$ is 
$\norm{A}_{2\to2}^2$, but runtimes of stochastic linear regression solvers 
typically depend on $\lfro{A}^2$ instead~\cite{StrohmerV09, Johnson13, 
Shalev-Shwartz13,  FrostigGKS15, LinMH15, Shalev-Shwartz16,
SchmidtRB17, Zhu17}. Viewed in this context, it is not surprising that our 
$\ell_2$-$\ell_2$ runtime scales as it does.
}
\arxiv{\subsection{$\ell_2$-$\ell_2$ games}\label{ssec:l2l2}}
\arxiv{\presentTwoTwo} %
\section{Extensions}\label{sec:ext}
In this section we collect a number of results that extend our framework 
and its applications. In \Cref{ssec:highprecision} we show how to use 
variance reduction to solve the proximal subproblem to high accuracy. This 
allows us to implement a relaxed gradient oracle for any monotone operator 
that admits an appropriate gradient estimator, overcoming a technical 
limitation in the analysis of Algorithm~\ref{alg:innerloop} (see discussion 
following Corollary~\ref{cor:innerloop-oracle}). In 
Section~\ref{ssec:composite}  we explain how to extend our results to 
composite saddle point problems of the form 
$\min_{x\in\xset}\max_{y\in\yset} \left\{f(x,y) + \phi(x) - \psi(y)\right\}$, 
where $f$ admits a centered gradient estimator and $\phi,\psi$ are convex 
functions. In~\Cref{ssec:strongly} we propose a small modification of 
Algorithm~\ref{alg:outerloop} that allows our framework to achieve linear 
convergence under strong-convexity-concavity of the objective (possibly 
arising from composite terms as in Section~\ref{ssec:composite}). In 
Section~\ref{ssec:finitesum}, we give a centered gradient estimator and 
resulting runtime bounds for finite-sum minimax problem of the form 
$\min_{x\in\xset}\max_{y\in\yset}\sum_{k \in [K]} f_k(x,y)$, when the 
gradient mapping for $f_k$ is Lipchitz continuous for every $k$. Finally, 
in Section~\ref{sec:additional} we return to the bilinear case and provide a 
number of alternative gradient estimators for the $\ell_2$-$\ell_1$ and 
$\ell_2$-$\ell_2$ settings. %
\subsection{High precision proximal mappings via variance reduction}
\label{ssec:highprecision}

\newcommand{\sind}[1]{^{(#1)}}
\newcommand{\zs}{z_\alpha}

Here we describe how to use gradient estimators that satisfy 
Definition~\ref{def:w-l-centered} to obtain high precision approximations 
to the exact proximal mapping, as well as a relaxed proximal oracle valid 
beyond the bilinear case. 
Algorithm~\ref{alg:highprecision} is a modification of 
Algorithm~\ref{alg:innerloop}, where we restart the mirror-descent iteration 
$N$ times, with each restarting constituting a \emph{phase}. In each phase, 
we re-center the gradient estimator $g$, but 
regularize towards the original initial point $w_0$. To analyze 
the performance of  the algorithm, we require two properties of proximal 
mappings with general Bregman divergences~\eqref{eq:prox-def}.

\begin{restatable}{lemma}{restateRegLB}\label{lem:reg-lb}
	Let $g$ by a monotone operator, let $z\in\zset$ and let $\alpha > 0$. 
	Then, for every $w\in\zset$, $\zs = \prox{z}{g}$ satisfies
	\begin{equation*}
	\inner{g(w)+\alpha\grad V_{z}(w)}{w-\zs} \ge \alpha V_{\zs}(w) + \alpha 
	V_{w}(\zs).
	\end{equation*}
\end{restatable}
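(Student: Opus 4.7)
The plan is to combine three ingredients: the defining variational inequality of $\zs=\prox{z}{g}$, the three-point identity~\eqref{eq:three-point} for the Bregman divergence, and the monotonicity of $g$. Nothing deeper than bookkeeping should be required; the challenge is lining the terms up so the $V_z(\cdot)$ pieces cancel, leaving only $V_{\zs}(w)$ and $V_w(\zs)$ on the right-hand side.

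First I would apply the definition~\eqref{eq:prox-def} of $\prox{z}{g}$ with the choice $u = w$, obtaining
\[
\inner{g(\zs)}{\zs - w} \le \alpha V_z(w) - \alpha V_{\zs}(w) - \alpha V_z(\zs),
\]
which after flipping the sign on the inner product yields a lower bound on $\inner{g(\zs)}{w - \zs}$. Next I would invoke the three-point identity~\eqref{eq:three-point} applied with $z'=w$ and $u=\zs$, namely $-\inner{\grad V_z(w)}{w - \zs} = V_z(\zs) - V_w(\zs) - V_z(w)$, and multiply by $-\alpha$ to rewrite $\alpha \inner{\grad V_z(w)}{w - \zs}$ as $\alpha V_z(w) - \alpha V_z(\zs) + \alpha V_w(\zs)$.

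Adding the two resulting inequalities, the terms $\alpha V_z(w)$ and $-\alpha V_z(\zs)$ cancel, leaving
\[
\inner{g(\zs) + \alpha \grad V_z(w)}{w - \zs} \ge \alpha V_{\zs}(w) + \alpha V_w(\zs).
\]
Finally, monotonicity of $g$ gives $\inner{g(w) - g(\zs)}{w - \zs} \ge 0$, so $\inner{g(w)}{w-\zs} \ge \inner{g(\zs)}{w-\zs}$, and substituting this into the previous display produces the claimed inequality.

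I do not expect any real obstacle: the whole statement is essentially the strong-monotonicity-style consequence of combining the optimality condition defining $\zs$ with the Bregman three-point lemma, and monotonicity is what lets us shift the gradient evaluation from $\zs$ to $w$. The only care needed is the sign of each Bregman term when applying~\eqref{eq:three-point}, which is why I would write out the identity with the specific choice $z'=w$, $u=\zs$ rather than quoting it abstractly.
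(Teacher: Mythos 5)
Your proposal is correct and is essentially the same argument as the paper's: both rely on the defining variational inequality for $\zs$, the monotonicity of $g$, and Bregman-divergence algebra. The only cosmetic difference is that you invoke the explicit form~\eqref{eq:prox-def} and apply the three-point identity~\eqref{eq:three-point} term by term, whereas the paper starts from the equivalent inequality $\inner{g(\zs)+\alpha\grad V_z(\zs)}{\zs-w}\le 0$ and uses the identity $\inner{\grad V_z(w)-\grad V_z(\zs)}{w-\zs}=V_{\zs}(w)+V_w(\zs)$ in one step.
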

\begin{proof}
	By definition of $\zs$, $\inner{g(\zs)+\alpha \grad V_{z}(\zs)}{\zs - w}\le 
	0$ for all $w\in\zset$. Therefore
	\begin{flalign*}
	\inner{g(w)+\alpha\grad V_{z}(w)}{w-\zs} & \ge 
	\inner{g(w)+\alpha\grad V_{z}(w)}{w-\zs} + \inner{g(\zs)+\alpha \grad 
	V_{z}(\zs)}{\zs - w}
	\\ &=
	\inner{g(w)-g(\zs)}{w-\zs}  + \alpha \inner{\grad V_{z}(w)-\grad 
	V_{z}(\zs)}{w-\zs} 
	\\ & \underset{(i)}{\ge} 
	\alpha \inner{\grad V_{z}(w)-\grad V_{z}(\zs)}{w-\zs} 
	\underset{(ii)}{=}   \alpha V_{\zs}(w) + \alpha V_{w}(\zs),
	\end{flalign*}
	where $(i)$ follows from monotonicity of $g$ and $(ii)$ holds by 
	definition of the Bregman divergence.
\end{proof}

\begin{restatable}{lemma}{restateRegLB}\label{lem:smooth-breg-ub}
	Let $g$ be a monotone operator and let $\alpha>0$. Then, for every 
	$z\in\zset$, $\zs = \prox{z}{g}$ satisfies
	\begin{equation*}
	V_{\zs}(z)  + V_{z}(\zs) \le \frac{\norm{g(z)}_*\norm{z-\zs}}{\alpha} \le 
	\frac{\norm{g(z)}_*^2}{\alpha^2}.
	\end{equation*}
\end{restatable}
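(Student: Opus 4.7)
The plan is to apply Lemma~\ref{lem:reg-lb} at the convenient point $w=z$. Since $V_{z}(z)=0$ and $\grad V_{z}(w)\big|_{w=z}=0$ (both follow directly from the definition of the Bregman divergence), the lemma specializes to
\[
\inner{g(z)}{z-\zs} \;\ge\; \alpha V_{\zs}(z) + \alpha V_{z}(\zs).
\]
Bounding the left-hand side by the Fenchel–Young/Cauchy–Schwarz inequality gives $\inner{g(z)}{z-\zs}\le \norm{g(z)}_*\norm{z-\zs}$, and dividing through by $\alpha$ yields the first inequality
\[
V_{\zs}(z) + V_{z}(\zs) \;\le\; \frac{\norm{g(z)}_*\norm{z-\zs}}{\alpha}.
\]

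For the second inequality, I would invoke the $1$-strong convexity of $r$ with respect to $\norm{\cdot}$, which implies $V_{a}(b)\ge \tfrac12\norm{a-b}^2$ for any $a,b\in\zset$. Applied to both divergences on the left, this gives $V_{\zs}(z)+V_{z}(\zs)\ge \norm{z-\zs}^2$. Combining with the first inequality produces
\[
\norm{z-\zs}^2 \;\le\; \frac{\norm{g(z)}_*\norm{z-\zs}}{\alpha},
\]
so (assuming $\norm{z-\zs}>0$, the case $z=\zs$ being trivial) $\norm{z-\zs}\le \norm{g(z)}_*/\alpha$. Substituting this bound back into the right-hand side of the first inequality yields $V_{\zs}(z)+V_{z}(\zs)\le \norm{g(z)}_*^2/\alpha^2$, completing the proof.

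There is no real obstacle here; the entire argument is a short manipulation of the variational characterization of $\zs$, followed by Cauchy–Schwarz and strong convexity. The only point that requires a moment of care is handling the subgradient of $r$ at $z$ when $r$ is not differentiable — one picks a subgradient defining $V_{z}$ and verifies that the ``gradient at $z$'' contributes zero to $\inner{\grad V_z(z)}{z-\zs}$, consistent with the convention $\inner{\grad r(z)}{w}\defeq\sup_{\gamma\in\partial r(z)}\inner{\gamma}{w}$ used throughout the paper.
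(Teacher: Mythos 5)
Your argument is correct and matches the paper's proof step for step: Lemma~\ref{lem:reg-lb} at $w=z$, H\"older's inequality to bound $\inner{g(z)}{z-\zs}$, and then strong convexity of $r$ to extract $\norm{z-\zs}\le \norm{g(z)}_*/\alpha$. The only cosmetic difference is that where you assert $\grad V_z(w)\big|_{w=z}=0$, the paper instead invokes the first-order optimality condition $\inner{\grad V_z(z)}{z-u}\le 0$ for $z$ minimizing the convex function $V_z(\cdot)$ over $\zset$, which is the slightly safer formulation when $r$ is only subdifferentiable.
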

\begin{proof}
	Using Lemma~\ref{lem:reg-lb} with $w=z$ gives
	\begin{equation*}
	\alpha V_{\zs}(z) + \alpha 
	V_{z}(\zs) \le \inner{g(z)+\alpha\grad V_{z}(z)}{z-\zs} \le \inner{g(z)}{z-\zs},
	\end{equation*}
	where we used the fact that $z$ minimizes the convex function 
	$V_z(\cdot)$ and 
	therefore $\inner{\grad V_z(z)}{z-u}\le 0$ for all $u\in\zset$. 
	Writing $\inner{g(z)}{z-\zs} \le \norm{g(z)}_*\norm{z-\zs}$ gives the first 
	bound in the lemma. Next, strong convexity of $r$ implies
	\begin{equation*}
	\norm{z-\zs}^2 \le V_{\zs}(z) + V_{z}(\zs) \le \frac{ 
	\norm{g(z)}_*\norm{z-\zs}}{\alpha},
	\end{equation*}
	and the second bound follows from dividing by $\norm{z-\zs}$.
\end{proof}

\begin{algorithm}
	\label{alg:highprecision}
	\DontPrintSemicolon
	\KwInput{Initial $w_0\in\zset$, centered gradient estimator 
	$\tilde{g}_{z}$ $\forall z\in\zset$, 
		oracle quality $\alpha>0$}
	\Parameter{Step size $\eta$, inner iteration count $T$, phase count	$N$}
	\KwOutput{Point $\hat{w}_N$ satisfying $\E 
	V_{\hat{w}_N}(\zs) \le 2^{-N} 
	V_{w_0}(\zs)$ where $\zs = 
	\prox{w_0}{g}$
		(for 
		appropriate $\tilde{g}$, $\eta$, $T$)}
	Set $\hat{w}_0 \gets w_0$\;
	\For{$n = 1, \ldots, N$}
	{
	Prepare centered gradient estimator $\tilde{g}_{\hat{w}_{n-1}}$
	\Comment*[f]{e.g., by 
	computing $g(\hat{w}_{n - 1})$}\; 
	Draw $\hat{T}$ uniformly from $[T]$\;
	$w_0\sind{n}\gets \hat{w}_{n-1}$\;
	\For{$t = 1, \ldots, \hat{T}$}
	{
		$w_t\sind{n} \leftarrow 
		\argmin_{w\in\zset}\left\{\inners{
			\tilde{g}_{\hat{w}_{n-1}}\big(w_{t-1}\sind{n}\big)}{w} + 
			\alpha V_{w_0}(w) + \frac{1}{\eta}V_{w_{t 
				- 1}\sind{n}}(w) 
		\right\}$\; \label{line:restarted-step}
	}
	$\hat{w}_n\gets w_{\hat{T}}\sind{n}$\;
	}
	\Return $\hat{w}_N$
	\caption{$\RestartedInnerLoop(w_0, z\mapsto \tilde{g}_{z}, \alpha)$}
\end{algorithm}

We now state the main convergence result for 
Algorithm~\ref{alg:highprecision}.

\begin{restatable}{proposition}{restateRestartedLoop}\label{prop:restartedloop}
	Let $\alpha,L>0$, let $w_0\in\zset$, let $\tilde{g}_{z}$ be 
	$(z,L)$-centered for monotone $g$ and every $z\in\zset$ and let $\zs = 
	\prox{w_0}{g}$. Then, for $\eta = 
	\frac{\alpha}{8L^2}$,  
	$T \ge  
	\frac{4}{\eta\alpha}=\frac{32L^2}{\alpha^2}$, and any $N\in\N$ the 
	output $\hat{w}_N$ of  
	Algorithm~\ref{alg:highprecision} satisfies
	\begin{equation}\label{eq:restartedloop-guarantee}
	\Ex{} V_{\hat{w}_N}(\zs) \le 2^{-N} V_{w_0}(\zs).
	\end{equation}
\end{restatable}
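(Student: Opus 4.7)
The plan is to prove a per-phase contraction: conditional on $\hat{w}_{n-1}$, I aim to show $\E[V_{\hat{w}_n}(\zs)\mid \hat{w}_{n-1}] \le \tfrac{1}{2}\, V_{\hat{w}_{n-1}}(\zs)$, from which the full guarantee~\eqref{eq:restartedloop-guarantee} follows by induction on $N$ with the base case $\hat{w}_0 = w_0$. Because $\hat{T}$ is drawn uniformly from $[T]$, conditional on $\hat{w}_{n-1}$ we have $\E V_{\hat{w}_n}(\zs) = \tfrac{1}{T}\sum_{t=1}^T \E V_{w_t\sind{n}}(\zs)$, so it suffices to bound the time-averaged iterate distance to $\zs$ within a single phase.

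First I would expand the first-order optimality condition of the mirror-descent update in Line~\ref{line:restarted-step} with test point $u = \zs$. Writing $\tilde{g}_{t-1} \defeq \tilde{g}_{\hat{w}_{n-1}}(w_{t-1}\sind{n})$ and applying the three-point identity~\eqref{eq:three-point} to both $V_{w_0}$ and $V_{w_{t-1}\sind{n}}$, this produces a per-step inequality of the form
\begin{equation*}
\big(\alpha + \tfrac{1}{\eta}\big)\, V_{w_t}(\zs) + \alpha V_{w_0}(w_t) + \tfrac{1}{\eta} V_{w_{t-1}}(w_t) \le \tfrac{1}{\eta} V_{w_{t-1}}(\zs) + \alpha V_{w_0}(\zs) - \inner{\tilde{g}_{t-1}}{w_t - \zs},
\end{equation*}
where I have suppressed the superscript $(n)$. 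I would split $-\inner{\tilde{g}_{t-1}}{w_t-\zs} = -\inner{\tilde{g}_{t-1}}{w_{t-1}-\zs} + \inner{\tilde{g}_{t-1}}{w_{t-1}-w_t}$, bound the second piece by Young's inequality (using $\norm{w_{t-1}-w_t}^2\le 2V_{w_{t-1}}(w_t)$ from strong convexity) and absorb the resulting $\frac{1}{\eta}V_{w_{t-1}}(w_t)$ into the LHS, leaving a controllable variance term $\frac{\eta}{2}\norms{\tilde{g}_{t-1}-g(\hat{w}_{n-1})}_*^2$ after re-centering around $g(\hat{w}_{n-1})$.

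Next I would take conditional expectation given $w_{t-1}$. Unbiasedness of the centered estimator turns $\E\inner{\tilde{g}_{t-1}}{w_{t-1}-\zs}$ into $\inner{g(w_{t-1})}{w_{t-1}-\zs}$, which I lower-bound using Lemma~\ref{lem:reg-lb} with $z=w_0$ and $w=w_{t-1}$:
\begin{equation*}
\inner{g(w_{t-1})}{w_{t-1}-\zs} \ge \alpha V_{\zs}(w_{t-1}) + \alpha V_{w_{t-1}}(\zs) - \alpha\inner{\grad V_{w_0}(w_{t-1})}{w_{t-1}-\zs}.
\end{equation*}
Expanding the last inner product via~\eqref{eq:three-point} yields cancellations: the $\alpha V_{w_0}(\zs)$ from the regularization on the RHS is exactly matched, and upon summing over $t$ the $\alpha V_{w_0}(w_t)$ terms telescope against the $\alpha V_{w_0}(w_{t-1})$ terms from Lemma~\ref{lem:reg-lb}, leaving only $\alpha V_{w_0}(w_T)-\alpha V_{w_0}(\hat{w}_{n-1})$ at the endpoints. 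The variance contribution is controlled by Definition~\ref{def:w-l-centered}.2: $\E\norms{\tilde{g}_{t-1}-g(\hat{w}_{n-1})}_*^2 \le L^2\norm{w_{t-1}-\hat{w}_{n-1}}^2 \le 2L^2 V_{\hat{w}_{n-1}}(w_{t-1})$.

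Choosing $\eta = \alpha/(8L^2)$ makes the $\tfrac{\eta L^2}{1}V_{\hat{w}_{n-1}}(w_{t-1})$ variance contribution dominated by the negative $\alpha V_{\zs}(w_{t-1})$ (equivalently $\alpha V_{w_{t-1}}(\zs)$) produced by Lemma~\ref{lem:reg-lb}; summing, dividing by $T$, and using $T\ge 4/(\eta\alpha) = 32L^2/\alpha^2$ to convert $\tfrac{1}{\eta T} V_{\hat{w}_{n-1}}(\zs)$ into a fraction of $\alpha V_{\hat{w}_{n-1}}(\zs)$ should give $\frac{1}{T}\sum_t \E V_{w_t}(\zs)\le \frac{1}{2}V_{\hat{w}_{n-1}}(\zs)$. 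The main obstacle I anticipate is the boundary leftover $\alpha V_{w_0}(\hat{w}_{n-1}) - \alpha\E V_{w_0}(w_T)$ from the telescoping sum, which is not monotone in the induction on $n$; I would handle it either by strengthening the induction hypothesis to a potential mixing $V_{\hat{w}_n}(\zs)$ and $V_{w_0}(\hat{w}_n)$, or by using Lemma~\ref{lem:smooth-breg-ub} to bound $V_{w_0}(\hat{w}_{n-1})$ in terms of $V_{w_0}(\zs)$ plus a factor-$1/2$-contracting $V_{\hat{w}_{n-1}}(\zs)$, so that the overall phase recursion still closes at contraction rate $1/2$.
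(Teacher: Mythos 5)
Your high-level plan (per-phase contraction, then induction) is the same as the paper's, and you correctly identify the two key ingredients: Lemma~\ref{lem:reg-lb} as a lower bound for the regularized operator, and Definition~\ref{def:w-l-centered}.2 for the variance. But your execution of the per-phase argument departs from the paper's and introduces two genuine gaps that do not appear to close.

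\textbf{The telescoping boundary term.} By working directly from the first-order optimality condition of $w_t\sind{n}$, shifting the test point from $w_t\sind{n}$ to $w_{t-1}\sind{n}$, and then expanding both $\alpha\inner{\grad V_{w_0}(w_t\sind{n})}{w_t\sind{n}-\zs}$ (from the step) and $\alpha\inner{\grad V_{w_0}(w_{t-1}\sind{n})}{w_{t-1}\sind{n}-\zs}$ (from Lemma~\ref{lem:reg-lb}) via the three-point identity, you produce $\alpha V_{w_0}(w_t\sind{n})$ on one side and $\alpha V_{w_0}(w_{t-1}\sind{n})$ on the other. Summing over $t$ telescopes these to $\alpha V_{w_0}(w_T\sind{n}) - \alpha V_{w_0}(\hat{w}_{n-1})$. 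Dropping the nonnegative $\alpha V_{w_0}(w_T\sind{n})$ leaves $+\alpha V_{w_0}(\hat{w}_{n-1})$ on the right, and this term is not controlled by the induction hypothesis $\E V_{\hat{w}_{n-1}}(\zs) \le 2^{-(n-1)}V_{w_0}(\zs)$: Bregman divergences satisfy no triangle inequality, so $V_{w_0}(\hat{w}_{n-1})$ is not bounded by $V_{w_0}(\zs) + V_{\hat{w}_{n-1}}(\zs)$ in general. Your first proposed fix (a two-term potential) would require proving that $V_{w_0}(\hat{w}_n)$ also contracts, which is not established. Your second proposed fix misreads Lemma~\ref{lem:smooth-breg-ub}: that lemma bounds $V_{\zs}(z)+V_z(\zs)$ with $z$ the \emph{center} of the prox mapping and is a one-shot bound at $z=w_0$; it does not bound $V_{w_0}(\hat{w}_{n-1})$ for an arbitrary iterate $\hat{w}_{n-1}$.

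\textbf{The variance cancellation.} You bound the variance contribution as $\eta L^2 V_{\hat{w}_{n-1}}(w_{t-1}\sind{n})$ and claim it is dominated by the negative $\alpha V_{\zs}(w_{t-1}\sind{n})$ from Lemma~\ref{lem:reg-lb}, ``equivalently $\alpha V_{w_{t-1}\sind{n}}(\zs)$.'' Neither equivalence holds: $V_{\hat{w}_{n-1}}(w_{t-1}\sind{n})$, $V_{\zs}(w_{t-1}\sind{n})$, and $V_{w_{t-1}\sind{n}}(\zs)$ are three different quantities, and for a general distance generating function none dominates another. To make the cancellation work you must first route through $\zs$: $\|w_t\sind{n}-\hat{w}_{n-1}\|^2 \le 2\|\zs - \hat{w}_{n-1}\|^2 + 2\|w_t\sind{n}-\zs\|^2 \le 4V_{\hat{w}_{n-1}}(\zs) + 4V_{\zs}(w_t\sind{n})$ (the paper's~\eqref{eq:restarts-rhs-bound}). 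Then $4V_{\zs}(w_t\sind{n})$ can be cancelled against the $-\alpha V_{\zs}(w_t\sind{n})$ from Lemma~\ref{lem:reg-lb}, and $4V_{\hat{w}_{n-1}}(\zs)$ joins the budget.

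The paper sidesteps both problems by \emph{not} decomposing the regularizer: it invokes the mirror-descent regret bound (Lemma~\ref{lem:mirror-descent}) with composite term $Q(z)=\eta\inner{g(\hat{w}_{n-1})}{z}+\eta\alpha V_{w_0}(z)$, which already aligns the index so that $\tilde{g}_{\hat{w}_{n-1}}(w_t\sind{n}) + \alpha\grad V_{w_0}(w_t\sind{n})$ is paired with $w_t\sind{n}-\zs$. Lemma~\ref{lem:reg-lb} can then be applied \emph{at $w_t\sind{n}$} as a single lower bound $\ge \alpha V_{w_t\sind{n}}(\zs) + \alpha V_{\zs}(w_t\sind{n})$ with no telescoping and no leftover $V_{w_0}(\cdot)$ term. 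You should rebuild your per-phase argument around this pairing.
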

\begin{proof}
	Fix a phase $n\in[N]$. For every $u\in\zset$ we have the mirror descent 
	regret bound
	\begin{equation*}
	\sum_{t\in[T]} \inner{\tilde{g}_{\hat{w}_{n-1}}\big(w_t\sind{n}\big) + 
	\alpha \grad 
		V_{w_0}\big(w_t\sind{n}\big)}{w_t\sind{n}-u} \le 
	\frac{V_{\hat{w}_{n-1}}(u)}{\eta} 
	+ \frac{\eta}{2}\sum_{t\in[T]} 
	\norm{
		\tilde{g}_{\hat{w}_{n-1}}\big(w_t\sind{n}\big)-g(\hat{w}_{n-1})}_*^2;
	\end{equation*}
	see Lemma~\ref{lem:mirror-descent} in 
	Appendix~\ref{app:mirror-descent}, with $Q(z)=\eta 
	\inner{g(\hat{w}_{n-1})}{z}+\eta\alpha 
	V_{w_0}(z)$. Choosing $u=\zs$, taking expectation and using 
	Definition~\ref{def:w-l-centered} gives
	\begin{equation}\label{eq:restarts-regret}
	\E\sum_{t\in[T]} \inner{g\big(w_t\sind{n}\big) + 
		\alpha \grad 
		V_{w_0}\big(w_t\sind{n}\big)}{w_t\sind{n}-\zs} \le 
	\frac{\E V_{\hat{w}_{n-1}}(\zs)}{\eta} 
	+ \frac{\eta L^2}{2}\sum_{t\in[T]} 
	\E\norm{
		w_t\sind{n}-\hat{w}_{n-1}}^2.
	\end{equation}
	(Note that $\zs$ is a function of $w_0$ and hence independent of 
	stochastic gradient estimates.) By the triangle inequality and strong 
	convexity of $r$,
	\begin{equation}\label{eq:restarts-rhs-bound}
	\norms{w_t\sind{n}-\hat{w}_{n-1}}^2
	\le 
	2\norms{\zs-\hat{w}_{n-1}}^2 + 2\norms{w_t\sind{n}-\zs}^2
	\le 4 V_{\hat{w}_{n-1}}(\zs) + 4 V_{\zs}\big(w_t\sind{n}\big). 
	\end{equation}
	By Lemma~\ref{lem:reg-lb} we have that for every $t\in [T]$
	\begin{equation}\label{eq:restarts-lhs-bound}
	\inner{g\big(w_t\sind{n}\big) + 
		\alpha \grad 
		V_{w_0}\big(w_t\sind{n}\big)}{w_t\sind{n}-\zs} 
	\ge \alpha V_{w_t\sind{n}}(\zs) + \alpha V_{\zs}\big(w_t\sind{n}\big).
	\end{equation}
	Substituting the bounds~\eqref{eq:restarts-rhs-bound} 
	and~\eqref{eq:restarts-lhs-bound} into the expected regret 
	bound~\eqref{eq:restarts-regret} and rearranging gives
	\begin{equation*}
	\frac{1}{T}\sum_{t\in[T]} \E V_{w_t\sind{n}}(\zs) 
	\le 
	\left(\frac{1}{\eta \alpha T} + \frac{2\eta L^2}{\alpha} \right) \E 
	V_{\hat{w}_{n-1}}(\zs)
	+ \frac{2\eta L^2 - \alpha}{\alpha T}\sum_{t\in[T]} 
	\E  V_{\zs}\big(w_t\sind{n}\big) \le \frac{1}{2}\, \E V_{w_t\sind{n-1}}(\zs),
	\end{equation*}
	where in the last transition we substituted $\eta = \frac{\alpha}{8L^2}$ 
	and 
	$T \ge \frac{4}{\eta \alpha}$. Noting that $\frac{1}{T}\sum_{t\in[T]} 
	\E V_{w_t\sind{n}}(\zs)  = \E V_{\hat{w}_n}(\zs)$ and recursing on $n$ 
 	completes the proof.
\end{proof}

The linear convergence bound~\eqref{eq:restartedloop-guarantee} 
combined with Lemma~\ref{lem:smooth-breg-ub} implies that 
Algorithm~\ref{alg:highprecision} implements a relaxed proximal oracle.
\begin{restatable}{corollary}{restateRestartedLoopOracle}
	\label{cor:restartedloop-oracle}
	Let $G,D>0$ be such that $\norm{g(z)}_* \le G$ and 
	$\norm{z-z'}\le D$ for every $z,z'\in\zset$ and let $\veps > 0$. 
	Then, in 
	the setting of Proposition~\ref{prop:restartedloop} with $N \ge 
	1+2 \log_2\left(\frac{G(G+2LD)}{\alpha \veps}\right)$, we have that 
	$\mc{O}(w_0) = 
	\RestartedInnerLoop(w_0, \tilde{g}, \alpha)$ is an 
	$(\alpha,\veps)$-relaxed proximal oracle.
\end{restatable}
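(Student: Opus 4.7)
The plan is to compare the approximate proximal point $\hat{w}_N$ produced by Algorithm~\ref{alg:highprecision} to the exact proximal point $\zs = \prox{w_0}{g}$, which trivially satisfies the oracle guarantee with $\veps=0$, and bound the error introduced by the approximation. First I would use the definition of $\zs$ (equation~\eqref{eq:prox-def}) to write, for every $u\in\zset$,
\begin{equation*}
\inner{g(\zs)}{\zs-u} - \alpha V_{w_0}(u) \le -\alpha V_{\zs}(u) - \alpha V_{w_0}(\zs) \le 0,
\end{equation*}
and then decompose
\begin{equation*}
\inner{g(\hat{w}_N)}{\hat{w}_N - u} - \alpha V_{w_0}(u) \le \inner{g(\hat{w}_N)-g(\zs)}{\hat{w}_N-u} + \inner{g(\zs)}{\hat{w}_N-\zs}.
\end{equation*}

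Next I would bound the two inner products uniformly in $u$. The first is at most $\norm{g(\hat{w}_N)-g(\zs)}_*\cdot\norm{\hat{w}_N-u} \le 2L\norm{\hat{w}_N-\zs}\cdot D$ using the $2L$-Lipschitz property of $g$ from Remark~\ref{rem:lipschitz} and the diameter bound $D$. The second is at most $G\norm{\hat{w}_N-\zs}$ using the gradient bound $G$. Combining,
\begin{equation*}
\max_{u\in\zset}\left\{\inner{g(\hat{w}_N)}{\hat{w}_N-u} - \alpha V_{w_0}(u)\right\} \le (G+2LD)\,\norm{\hat{w}_N-\zs}.
\end{equation*}

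Now I would take expectations and convert Bregman distance to norm. By strong convexity $\norm{\hat{w}_N-\zs}^2 \le 2V_{\hat{w}_N}(\zs)$, so Jensen's inequality and Proposition~\ref{prop:restartedloop} give
\begin{equation*}
\E\norm{\hat{w}_N-\zs} \le \sqrt{2\,\E V_{\hat{w}_N}(\zs)} \le \sqrt{2^{1-N}\,V_{w_0}(\zs)}.
\end{equation*}
Applying Lemma~\ref{lem:smooth-breg-ub} with $z=w_0$ yields $V_{w_0}(\zs) \le \norm{g(w_0)}_*^2/\alpha^2 \le G^2/\alpha^2$, so
\begin{equation*}
\E\max_{u\in\zset}\left\{\inner{g(\hat{w}_N)}{\hat{w}_N-u} - \alpha V_{w_0}(u)\right\} \le \frac{G(G+2LD)}{\alpha}\cdot 2^{(1-N)/2}.
\end{equation*}
Setting the RHS to at most $\veps$ and solving for $N$ gives exactly $N\ge 1+2\log_2\!\left(\frac{G(G+2LD)}{\alpha\veps}\right)$, matching the statement.

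The main conceptual step is the error decomposition that separates the Lipschitz perturbation from the "baseline" $\inner{g(\zs)}{\zs-u}$ term; after that, the pieces (Remark~\ref{rem:lipschitz}, Lemma~\ref{lem:smooth-breg-ub}, Proposition~\ref{prop:restartedloop}) combine almost mechanically. There is no real obstacle, since the restarted inner loop delivers geometric decay in $V_{\hat{w}_N}(\zs)$, so logarithmically many phases suffice even though the error depends linearly on $\norm{\hat{w}_N-\zs}$ (square root of Bregman divergence).
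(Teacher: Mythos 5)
Your proof is correct and follows essentially the same route as the paper's: the same three-term decomposition around the exact proximal point $\zs$, the same use of Remark~\ref{rem:lipschitz} and the diameter/gradient bounds to control the error by $(G+2LD)\norm{\hat{w}_N-\zs}$, and the same chain of Jensen's inequality, Proposition~\ref{prop:restartedloop}, and Lemma~\ref{lem:smooth-breg-ub} to finish. The only cosmetic difference is that you fold the nonpositivity of $\inner{g(\zs)}{\zs-u}-\alpha V_{w_0}(u)$ into an inequality up front, while the paper keeps the decomposition as an equality and bounds each term afterward; the computations are identical.
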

\begin{proof}
	Let $\hat{w}=\RestartedInnerLoop(w_0, \tilde{g}, \alpha)$ and let $\zs = 
	\prox{w_0}{g}$. For every $u\in\zset$, we have
	\begin{equation*}
	\inner{g(\hat{w})}{\hat{w}-u}
	=
	\inner{g(\zs)}{\zs-u}
	+
	\inner{g(\zs)}{\hat{w}-\zs}
	+
	\inner{g(\hat{w})-g(\zs)}{\hat{w}-u}.
	\end{equation*}
	By the definition~\eqref{eq:prox-def} of $\zs$ we have 
	$\inner{g(\zs)}{\zs-u} \le \alpha V_{w_0}(u)$. By Hölder's inequality and 
	the assumption that $g$ is bounded, we have  
	$\inner{g(\zs)}{\hat{w}-\zs} \le G \norm{\hat{w}-\zs}$. Finally, since $g$ 
	is $2L$-Lipschitz (see Remark~\ref{rem:lipschitz}) and 
	$\norm{\hat{w}-u}\le D$ by assumption, we have 
	$\inner{g(\hat{w})-g(\zs)}{\hat{w}-u} \le 2LD\norm{\hat{w}-\zs}$. 
	Substituting back these three bounds and rearranging yields
	\begin{equation*}
	\inner{g(\hat{w})}{\hat{w}-u} - \alpha V_{w_0}(u)
	\le 
	(G + 2LD)\norm{\hat{w}-\zs}
	\le 
	(G + 2LD)\sqrt{2V_{\hat{w}}(\zs)},
	\end{equation*}
	where the last bound is due to strong convexity of $r$. Maximizing over 
	$u$ and taking expectation, we have by Jensen's inequality and  
	Proposition~\ref{prop:restartedloop},
	\begin{equation*}
	\E \max_{u\in\zset} \left\{\inner{g(\hat{w})}{\hat{w}-u} - \alpha 
	V_{w_0}(u)\right\}
	\le 
	(G + 2LD)\sqrt{2\E V_{\hat{w}}(\zs)}
	\le 
	2^{-(N-1)/2}(G + 2LD)\sqrt{V_{w_0}(\zs)}.
	\end{equation*}
	Lemma~\ref{lem:smooth-breg-ub} gives us $\sqrt{V_{w_0}(\zs)} \le 
	\sqrt{\norm{g(w_0)}_*^2 / \alpha ^2} \le G/\alpha$, and therefore 
	$N\ge 1+
	2 \log_2\left(\frac{G(G+2LD)}{\alpha \veps}\right)$ establishes the 
	oracle property $\E \max_{u\in\zset} \left\{\inner{g(\hat{w})}{\hat{w}-u} 
	- \alpha 
	V_{w_0}(u)\right\} \le \veps$.
\end{proof}

\begin{remark}\label{rem:high-precision-extension}
	In the $\ell_2$-$\ell_1$ setup of Section~\ref{ssec:l1l2}, 
	Proposition~\ref{prop:restartedloop} and 
	Corollary~\ref{cor:restartedloop-oracle} extend straightforwardly to 
	centered-bounded-biased gradient estimators 
	(Definition~\ref{def:z-l-cbb}) using arguments from the proof 
	of Proposition~\ref{prop:innerloop-cbb}. 
\end{remark}

Since Algorithm~\ref{alg:highprecision} computes a highly accurate 
approximation of the proximal mapping, it is reasonable to expect that 
directly iterating $z_k = \RestartedInnerLoop(z_{k-1}, \tilde{g}, \alpha)$ for 
$k\in [K]$
would yield an $O(\alpha \Theta / K)$ error bound, without requiring the 
extragradient step in Algorithm~\ref{alg:outerloop}. However, we could not 
show such a bound without additionally requiring uniform smoothness 
of the distance generating function $r$, which does not hold for the 
negative entropy we use in the $\ell_1$ setting.

\subsection{Composite problems}\label{ssec:composite}

Consider the ``composite'' saddle point problem of the form
\begin{equation*}
\min_{x\in\xset}\max_{y\in\yset} 
\left\{f(x,y) + \phi(x) - \psi(y)\right\},
\end{equation*}
where $\grad f$ admits a centered gradient estimator and $\phi,\psi$ are 
``simple'' convex functions in the sense they have efficiently-computable 
proximal mappings. As usual in convex optimization, it is straightforward 
to extend our framework to this setting. Let $\Upsilon(z) \defeq \phi(z\x) + 
\psi(z\y)$ so that $g(z)+\grad\Upsilon(z)$ denotes a (sub-)gradient 
mapping for the composite problem at point $z$. Algorithmically, the 
extension consists of changing Line~\ref{line:outer-extragrad} of
Algorithm~\ref{alg:outerloop} to
\begin{equation*}
z_k \leftarrow
\argmin_{z \in \zset}\left\{ 
\inner{g\left(z_{k-1/2}\right)+\grad\Upsilon(z_{k-1/2})}{z} 
+
\alpha V_{z_{k-1}}(z)\right\},
\end{equation*}
changing line~\ref{line:inner-step} of Algorithm~\ref{alg:innerloop} to
\begin{equation*}
w_t \leftarrow \argmin_{w\in\zset}\left\{\inner{\tilde{g}_{w_0}(w_{t - 
		1})}{w} + \Upsilon(w) + \frac{\alpha}{2}V_{w_0}(w) + 
\frac{1}{\eta}V_{w_{t - 1}}(w)
\right\},
\end{equation*}
and similarly adding
 $\Upsilon(w)$ 
 to the minimization in 
line~\ref{line:restarted-step} of Algorithm~\ref{alg:highprecision}. 

Analytically, we replace $g$ with $g+\grad\Upsilon$ %
 in the duality gap bound~\eqref{eq:gap-bound}, 
Definition~\ref{def:alphaprox} (relaxed proximal oracle), and 
Proposition~\ref{prop:outerloopproof} and its proof, which holds without 
further change. To implement the composite relaxed proximal oracle we 
still assume a centered gradient estimator for $g$ only. However, with the 
algorithmic modifications described above, the 
guarantee~\eqref{eq:innerloop-guarantee} of  
Proposition~\ref{prop:innerloopproof} now has $g+\grad\Upsilon$ instead 
of $g$; the 
only change to the proof is that we now invoke 
Lemma~\ref{lem:mirror-descent} (in Appendix~\ref{app:mirror-descent}) 
with the 
composite term $\eta \left[\inner{g(w_0)}{z} + \Upsilon(z) + 
\tfrac{\alpha}{2}V_{w_0}(z)\right]$, and the 
bound~\arxiv{\eqref{eq:composite-regret-basic}}%
\notarxiv{\eqref{eq:composite-regret-basic-rep}} becomes
\begin{equation*}
\sum_{t\in[T]} \inner{\tilde{g}_{w_0}(w_t) + 
\grad\Upsilon(w_t)+\tfrac{\alpha}{2}\grad 
	V_{w_0}(w_t)}{w_t-u} \le \frac{V_{w_0}(u)}{\eta} 
+ \frac{\eta}{2}\sum_{t\in[T]} \norms{\tilde{\delta}_t}_*^2.
\end{equation*}
Proposition~\ref{prop:innerloop-cbb}, Proposition~\ref{prop:restartedloop} 
and Corollary~\ref{cor:restartedloop-oracle} similarly extend to the 
composite setup.

The only point in our development that does not immediately extend to the 
composite setting is Corollary~\ref{cor:innerloop-oracle} and its 
subsequent discussion. There, we argue that Algorithm~\ref{alg:innerloop} 
implements a relaxed proximal oracle only when $\inner{g(z)}{z-u}$ is 
convex in $z$ for all $u$, which is the case for bilinear $f$. However, 
this condition might fail for $g+\grad \Upsilon$ even when it holds for 
$g$. In this case, we may still use the oracle implementation 
guaranteed by Corollary~\ref{cor:restartedloop-oracle} for any convex 
$\Upsilon$. 

\subsection{Strongly monotone problems}\label{ssec:strongly}

In this section we consider
variational inequalities with \emph{strongly monotone} operators. 
Following~\citet{LuFN18}, we say that an 
operator $g$ is $\mu$-strongly monotone relative to the distance 
generating function $r$ if it satisfies
\begin{equation}
\label{eq:sm-def}
\inner{g(z')-g(z)}{z'-z}\ge \mu  \cdot V_{z'}(z)~~\mbox{for all}~~ 
z',z\in\zset.	
\end{equation}
By strong convexity of $r$, a $\mu$-strongly monotone operator relative 
to $r$ is also $\mu$-strongly monotone in the standard sense, i.e.,  
$\inner{g(z')-g(z)}{z'-z} \ge 
\frac{\mu}{2} \norm{z'-z}^2$ for all $z,z'\in\zset$. For saddle point 
problems $\min_{x\in\xset}\max_{y\in\yset}f(x,y)$ with gradient mapping  
$g(x,y)=(\grad_x f(x,y), -\grad_y f(x,y))$ and separable 
$r(x,y)=r\x(x)+r\y(y)$, the 
property~\eqref{eq:sm-def} holds whenever $f(x,y)- \mu r\x(x) + \mu 
r\y(y)$ is convex-concave, i.e., whenever $f(\cdot, y)$ is  
$\mu$-strongly convex relative to $r\x$ for every $y$ and $f(x,\cdot)$ 
is $\mu$-strongly-concave relative to $r\y$ for every 
$x$.

Strongly monotone operators have a unique saddle point $z^\star$ 
satisfying  
\begin{equation}
\label{eq:gen-opt}
\max_{u\in\zset}\inner{g(z^\star)}{z^\star-u}\le 0.
\end{equation}
We show that Algorithm~\ref{alg:outerloop-sm}---a small modification of 
Algorithm~\ref{alg:outerloop}---converges linearly to $z^\star$ (i.e., with 
the 
distance to $z^\star$ decaying 
exponentially in the number of iterations).
The main difference between the algorithms is that the extra-gradient step 
in Algorithm~\ref{alg:outerloop-sm} has an additional regularization term 
around $z_{k-1/2}$, making it similar in form to the regularized  
mirror descent steps we use in Algorithm~\ref{alg:innerloop}. In addition, 
Algorithm~\ref{alg:outerloop-sm} outputs the last iterate rather the 
iterate average.

\begin{algorithm}
	\label{alg:outerloop-sm}
	\DontPrintSemicolon
	\KwInput{$(\alpha,\veps)$-relaxed proximal oracle $\mathcal{O}(z)$ for 
	gradient mapping $g$ satisfying~\eqref{eq:sm-def}}
	\Parameter{Number of iterations $K$}
	\KwOutput{Point $z_K$ with $\E V_{z_K}(z^\star)\le 
	(\tfrac{\alpha}{\mu+\alpha})^K\Theta+\frac{\veps}{\mu}$}
	$z_0 \gets \argmin_{z\in\zset} r(z)$ \;
	\For{$k = 1, \ldots, K$}
	{
		$z_{k-1/2} \leftarrow \mathcal{O}(z_{k - 1})$ \Comment*[f]{We 
		implement $\mathcal{O}(z_{k-1})$ by calling $\InnerLoop(z_{k-1}, 
		\tilde{g}_{z_{k - 1}}, \alpha)$} \; 
		$z_k \leftarrow \argmin_{z \in \zset}\left\{ 
		\inner{g\left(z_{k-1/2}\right)}{z} + 
		\alpha V_{z_{k-1}}(z)+\mu V_{z_{k-1/2}}(z)\right\}$\; 
		\label{line:outer-extragrad-sm}
	}
	\Return $z_K$
	\caption{$\OuterLoopGen(\mathcal{O})$}
\end{algorithm}

\begin{proposition}%
\label{prop:outerloopproof-sm}
Let $\mathcal{O}$ be an ($\alpha$,$\veps$)-relaxed proximal oracle for a 
gradient mapping $g$ that is $\mu$-strongly monotone relative to  
distance-generating function $r$ with 
range at most $\Theta$. 
Let $z_K$ be the output of Algorithm~\ref{alg:outerloop-sm}. Then
\begin{equation*}
\E\, 
V_{z_K}(z^\star)\le\left(\frac{\alpha}{\mu+\alpha}\right)^K\Theta
+\frac{\veps}{\mu}.
\end{equation*}
If in addition $g$ is a gradient mapping for $f$ and $\norm{(\grad_x f(z), 
-\grad_y f(z'))}_*\le G$ for all $z,z'\in\zset$, then
\begin{equation*}
\E\, \gap(z_K) \le 
\sqrt{2}G\sqrt{\left(\frac{\alpha}{\mu+\alpha}\right)^K \Theta 
+\frac{\veps}{\mu}}.
\end{equation*}
\end{proposition}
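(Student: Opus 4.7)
The plan is to derive a one-step contraction of the form $(\alpha+\mu)\,V_{z_k}(z^\star) \le \alpha\,V_{z_{k-1}}(z^\star) + (\text{oracle error})$ and then unroll the resulting geometric recursion. The main ingredients are (i) the first-order optimality condition for the regularized extragradient step (line~\ref{line:outer-extragrad-sm}), (ii) the three-point identity~\eqref{eq:three-point}, (iii) the strong-monotonicity inequality~\eqref{eq:sm-def} together with the optimality characterization~\eqref{eq:gen-opt} of $z^\star$, and (iv) the $(\alpha,\veps)$-relaxed oracle property of Definition~\ref{def:alphaprox}.

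First I would write the optimality condition for $z_k$ against an arbitrary $u\in\zset$ and apply the three-point identity twice (once for the $\alpha V_{z_{k-1}}$ term and once for the $\mu V_{z_{k-1/2}}$ term) to obtain
\[
\inner{g(z_{k-1/2})}{z_k-u} \le \alpha\bigl(V_{z_{k-1}}(u)-V_{z_k}(u)-V_{z_{k-1}}(z_k)\bigr) + \mu\bigl(V_{z_{k-1/2}}(u)-V_{z_k}(u)-V_{z_{k-1/2}}(z_k)\bigr).
\]
Setting $u=z^\star$ and splitting $\inner{g(z_{k-1/2})}{z_k-z^\star} = \inner{g(z_{k-1/2})}{z_{k-1/2}-z^\star} + \inner{g(z_{k-1/2})}{z_k-z_{k-1/2}}$, I apply~\eqref{eq:sm-def} and~\eqref{eq:gen-opt} to conclude $\inner{g(z_{k-1/2})}{z_{k-1/2}-z^\star}\ge \mu V_{z_{k-1/2}}(z^\star)$. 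The two $\mu V_{z_{k-1/2}}(z^\star)$ terms then cancel, and discarding the nonpositive term $-\mu V_{z_{k-1/2}}(z_k)$ leaves
\[
(\alpha+\mu)\,V_{z_k}(z^\star) \le \alpha\,V_{z_{k-1}}(z^\star) + \inner{g(z_{k-1/2})}{z_{k-1/2}-z_k} - \alpha\,V_{z_{k-1}}(z_k).
\]
Taking expectations and instantiating the relaxed oracle guarantee with $u=z_k$ bounds the last two terms by $\veps$, yielding $\E V_{z_k}(z^\star) \le \tfrac{\alpha}{\alpha+\mu}\E V_{z_{k-1}}(z^\star) + \tfrac{\veps}{\alpha+\mu}$. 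Unrolling over $K$ steps and bounding the resulting geometric sum by $\tfrac{\alpha+\mu}{\mu}$, together with $V_{z_0}(z^\star)\le \Theta$ from the choice of $z_0$, gives the first claim.

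For the gap bound, the strategy is to relate $\gap(z_K)$ to $\|z_K-z^\star\|$ using convexity/concavity together with the saddle-point identity $f(z^\star\x,y')\le f(x',z^\star\y)$. Specifically, for any $x',y'$ I would write
\[
f(z_K\x,y')-f(x',z_K\y) \le \bigl[f(z_K\x,y')-f(z^\star\x,y')\bigr] + \bigl[f(x',z^\star\y)-f(x',z_K\y)\bigr],
\]
then apply convexity of $f(\cdot,y')$ and concavity of $f(x',\cdot)$ to upper bound each bracket by a linear term, combine into $\inner{(\grad_x f(z_K\x,y'),-\grad_y f(x',z_K\y))}{z_K-z^\star}$, and invoke the Hölder bound with the assumed $G$. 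Finally, strong convexity of $r$ gives $\|z_K-z^\star\|\le \sqrt{2V_{z_K}(z^\star)}$, so $\gap(z_K)\le G\sqrt{2V_{z_K}(z^\star)}$; Jensen's inequality combined with the first part finishes the proof.

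I expect the principal subtlety to be ensuring that the cross-terms cancel cleanly: specifically, the added regularizer $\mu V_{z_{k-1/2}}$ in the extragradient step must produce exactly the $\mu V_{z_{k-1/2}}(z^\star)$ that neutralizes the bound from strong monotonicity, while simultaneously not interfering with the telescoping structure on $V_{z_k}(z^\star)$. Everything else is a straightforward combination of previously established tools.
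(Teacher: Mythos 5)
Your proof is correct and essentially mirrors the paper's: both derive the one-step contraction by combining the first-order optimality of the regularized extragradient step with the three-point identity, cancel the $\mu V_{z_{k-1/2}}(z^\star)$ terms coming from the added regularizer and from strong monotonicity plus optimality of $z^\star$, invoke the oracle guarantee at $u=z_k$, and unroll; the gap bound then follows from the $G$-Lipschitzness of the restricted gap and strong convexity of $r$. The only cosmetic difference is that you split the inner product as $\inner{g(z_{k-1/2})}{z_{k-1/2}-z^\star}+\inner{g(z_{k-1/2})}{z_k-z_{k-1/2}}$ before applying optimality, whereas the paper starts from the strong-monotonicity inequality and then substitutes in the step bound, and you unroll the Lipschitz bound on the gap directly from convexity/concavity rather than stating it abstractly; the content is identical.
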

\begin{proof}
Fix an iteration $k$. Using $\mu$-strong-monotonicity~\eqref{eq:sm-def} 
and optimality of $z^\star$~\eqref{eq:gen-opt} yields
\begin{flalign}
\mu V_{z_{k-1/2}}(z^\star) & \stackrel{\eqref{eq:sm-def}}{\le}  
\inner{g(z_{k-1/2})-g(z^\star)}{z_{k-1/2} - z^\star}
 \stackrel{\eqref{eq:gen-opt}}{\le}  \inner{g(z_{k-1/2})}{z_{k-1/2} - z^\star}
 \nonumber \\ &
 =  \inner{g(z_{k-1/2})}{z_{k} - z^\star} + \inner{g(z_{k-1/2})}{z_{k-1/2} - 
 z_{k}}.\label{eq:strong-eq-1}
\end{flalign}
Next, %
\begin{flalign}
	\inner{g(z_{k-1/2})}{z_{k} - z^\star} &\stackrel{(i)}{\leq}  -\langle\alpha 
	\nabla V_{z_{k-1}}(z_k)+\mu\nabla V_{z_{k-1/2}}(z_k),z_k-z^\star\rangle
	\nonumber \\ &
	 \stackrel{(ii)}{\leq} 
	 \mu\Big(V_{z_{k - 1/2}}(z^\star) - V_{z_k}(z^\star) - V_{z_{k - 
	 1/2}}(z_k)\Big)
+\alpha\Big(V_{z_{k-1}}(z^\star)-V_{z_{k}}(z^\star)-V_{z_{k-1}}(z_k)\Big)
\nonumber \\ &
\le \alpha V_{z_{k-1}}(z^\star)-(\mu+\alpha)V_{z_k}(z^\star)+\mu 
V_{z_{k-1/2}}(z^\star)-\alpha V_{z_{k-1}}(z_k),\label{eq:strong-eq-2}
\end{flalign}
where we use $(i)$ the first-order optimality condition for $z_k$ and $(ii)$ 
the three-point property of Bregman divergence~\eqref{eq:three-point}. In 
the last inequality, we also use nonnegativity of $V_{z_{k-1/2}}(z_k)$. Now, 
combining~\eqref{eq:strong-eq-1} and~\eqref{eq:strong-eq-2} yields
\begin{equation*}
\label{eq:strong-eq-both}	
\begin{aligned}
\mu V_{z_{k-1/2}}(z^\star) & \le \alpha 
V_{z_{k-1}}(z^\star)-(\mu+\alpha)V_{z_k}(z^\star)+\mu 
V_{z_{k-1/2}}(z^\star) +[\inner{g(z_{k-1/2})}{z_{k-1/2} - z_{k}}-\alpha 
V_{z_{k-1}}(z_k)].
\end{aligned}
\end{equation*}

Rearranging terms, taking an expectation, and using the  
Definition~\ref{def:alphaprox} of an $(\alpha,\veps)$-relaxed proximal 
oracle yields
\[
\E V_{z_k}(z^\star)\le\frac{\alpha}{\mu+\alpha}\E 
V_{z_{k-1}}(z^\star)+\frac{\veps}{\mu+\alpha}.
\]
Applying this bound recursively $K$ times and using that 
$V_{z_0}(u) = 
r(u)-r(z_0) \le \Theta$ for $z_0$ the minimizer of $r$, we have
\[
\E V_{z_K}(z^\star)\le\left(\frac{\alpha}{\mu+\alpha}\right)^K\Theta 
+\sum_{k = 0}^{K-1}\left(\frac{\alpha}{\mu + \alpha}\right)^{k}  
\left(\frac{\veps}{\mu + \alpha}\right) 
\le 
\left(\frac{\alpha}{\mu+\alpha}\right)^K\Theta +
\frac{\veps}{\mu}.
\]

To bound $\gap(z_K)$ (defined in~\eqref{eq:gap-def}), write 
$\gapu(z;u)=f(z\x,u\y)-f(u\x,z\y)$. Then we have 
$\norm{\grad_z\gapu(z;u)}_* =\norm{(\grad_x f(z\x,u\y),-\grad_y 
f(u\y,z\x))}_*\le G$ by assumption (since $\zset=\xset\times \yset$), and 
therefore $\gapu(z;u)$ is $G$-Lipschitz in $z$. Consequently, for any 
$u\in\zset$, \begin{equation*} \gapu(z_K;u) \le \gapu(z^\star;u) + 
G\norm{z_K - z^\star} \le G\norm{z_K - z^\star}, \end{equation*} where 
the second transition follows from the optimality of $z^\star$ (see 
Appendix~\ref{app:gap-bound}). Therefore, the 
definition~\eqref{eq:gap-def} of $\gap$, strong convexity of $r$ and 
Jensen's inequality yield \begin{equation*} \E \gap(z_K) = 
\E\max_{u\in\zset}  \gapu(z_K;u) \le G\, \E \norm{z_K - z^\star} \le \E G 
\sqrt{ 2V_{z_K}(z^\star)} \le \sqrt{2}G \sqrt{ \E V_{z_K}(z^\star)}. 
\end{equation*} Substituting the bound on $\E V_{z_K}(z^\star)$ 
concludes the proof.
\end{proof}

\begin{remark}
Algorithm~\ref{alg:outerloop-sm} and 
Proposition~\ref{prop:outerloopproof-sm} extend straightforwardly to the 
composite setting of Section~\ref{ssec:composite}. Using the notation of 
Section~\ref{ssec:composite}, we simply replace $g$ with $g+\grad 
\Upsilon$ in the condition~\eqref{eq:sm-def}, 
line~\ref{line:outer-extragrad-sm} of Algorithm~\ref{alg:outerloop-sm}, 
and Proposition~\ref{prop:outerloopproof-sm}. For the bound on $\E 
\gap(z_K)$ in 
Proposition~\ref{prop:outerloopproof-sm}, we require that   
$\norm{(\grad_x f(z)+\grad \phi(z\x), 
	-\grad_y f(z')+\grad \psi({z'}\y))}_*\le G$ for all $z,z'\in\zset$. (We also 
	still modify Algorithm~\ref{alg:innerloop} according to 
	Section~\ref{ssec:composite}.)
\end{remark}

Notably, Algorithm~\ref{alg:outerloop-sm} and 
Proposition~\ref{prop:outerloopproof-sm} work with the same relaxed 
proximal oracles as the rest of our framework, making our implementations 
via Algorithm~\ref{alg:innerloop} (or Algorithm~\ref{alg:highprecision}) 
and 
centered gradient estimators immediately applicable.
However the gradient mappings of bilinear functions are not 
strongly monotone. Therefore, to leverage strong monotonicity we 
consider the  
composite problem
\begin{align}
\min_{x\in \xset}\max_{y\in\yset} y^\top Ax+\phi(x)-\psi(y)
\label{eq:composite-bilinear}
\end{align}
where we assume that $\phi$ and $\psi$ admit simple proximal mappings, 
are Lipschitz,  and are strongly convex relative to the distance generating 
functions $r\x$ and $r\y$. More specifically, we assume that
\begin{equation}\label{eq:composite-bounded}
\norm{(\grad \phi(x), \grad \psi(y))}_* \le B~~\mbox{for all 
}x\in\xset~\mbox{and}~y\in\yset
\end{equation}
and
\begin{equation}\label{eq:composite-stronly}
\phi - \mu\x r\x~\mbox{and}~\psi - \mu\y r\y~\mbox{are convex},
\end{equation}
for some constants $B,\mu\x,\mu\y >0$.  

\newcommand{\rhat}{\hat{r}}
\newcommand{\Vhat}{\what{V}}

To handle the possibility that $\mu\x \ne \mu\y$ we use a norm-rescaling 
approach similar to the ``mixed setup'' argument of~\citet{Nemirovski04}. 
Let
\begin{equation*}
\rho \defeq \sqrt{\frac{\mu\x}{\mu\y}}
\end{equation*}
and define new distance generating functions
\begin{equation}\label{eq:rescaled-r}
\rhat\x(x) \defeq \rho r\x(x)
~~\mbox{and}~~\rhat\y(y) \defeq \frac{1}{\rho} r\x(y).
\end{equation}
By the assumption~\eqref{eq:composite-stronly} we have that $\phi - 
\sqrt{\mu\x\mu\y} \rhat\x$ and $\psi - \sqrt{\mu\x\mu\y} \rhat\y$ are 
convex, and therefore the objective~\eqref{eq:composite-bilinear} is 
$\sqrt{\mu\x\mu\y}$-strongly monotone relative to $\rhat(z) = 
\rhat\x(z\x)+\rhat\y(z\y)$. Using $\rhat$ instead of $r$ in 
Algorithms~\ref{alg:outerloop-sm} and~\ref{alg:innerloop} is equivalent to 
scaling $\eta, \alpha$ and $\mu$ separately for each of the $\xset$ and 
$\yset$ blocks; for ease of reference we give the complete resulting 
procedure as Algorithm~\ref{alg:full-loop}.

\begin{algorithm}
	\label{alg:full-loop}
	\DontPrintSemicolon
	\KwInput{A problem of the form~\eqref{eq:composite-bilinear} and a 
		centered gradient estimator $\tilde{g}$.}
	\Parameter{Strong-monotonicity $\mu=\sqrt{\mu\x\mu\y}$, step size 
		$\eta$, step size ratio $\rho = \sqrt{\mu\x / \mu\y}$, oracle 
		quality 
		$\alpha >0$, number of inner-loop iterations $T$, and number of 
		outer-loop iterations $K$.}
	\KwOutput{Point $z_K$ with $\E \left[\rho  
	V\x_{z_K}(z^\star)+\tfrac{1}{\rho } V\y_{z_K}(z^\star)\right]\le 
		(\tfrac{\alpha}{\mu+\alpha})^K(\rho +\tfrac{1}{\rho })\Theta$}
	$z_0 \gets \argmin_{z\in\zset} r(z)$ \;
	\For{$k = 1, \ldots, K$}
	{
		$w_0\gets z_{k-1}$\;
		\For{$t = 1, \ldots, T$}
		{
			$w_t\x \leftarrow 
			\argmin_{w\x\in\xset}\left\{\inner{\tilde{g}\x_{w_0}(w_{t - 
					1})}{w\x} + \phi(w\x) + \frac{\alpha}{2}\rho V\x_{w_0}(w) + 
			\frac{1}{\eta}\rho V\x_{w_{t - 1}}(w) 
			\right\}$\; 
			$w_t\y \leftarrow  
			\argmin_{w\y\in\yset}\left\{\inner{\tilde{g}\x_{w_0}(w_{t - 
					1})}{w\y} + \psi(w\y) + 
					\frac{\alpha}{2}\tfrac{1}{\rho }V\y_{w_0}(w) 
			+ 
			\frac{1}{\eta}\tfrac{1}{\rho }V\y_{w_{t - 1}}(w) 
			\right\}$\; 
		}
		$z_{k-1/2}=\frac{1}{T}\sum_{t=1}^T w_t$\;
		$z_k\x \leftarrow  \argmin_{z\x \in \xset}\left\{ 
		\inners{g\x\left(z_{k-1/2}\right) + \grad \phi(z_{k-1/2}\x)}{z\x} + 
		\alpha\rho  V\x_{z_{k-1}}(z)+\mu \rho  
		V\x_{z_{k-1/2}}(z)\right\}$\; 
		$z_k\y \leftarrow \argmin_{z\y \in \yset}\left\{ 
		\inners{g\y\left(z_{k-1/2}\right)+ \grad \psi(z_{k-1/2}\x)}{z\x} + 
		\alpha\frac{1}{\rho } V\y_{z_{k-1}}(z)+\mu \frac{1}{\rho } 
		V\y_{z_{k-1/2}}(z)\right\}$\;
		\label{line:outer-extragrad-sm}
	}
	\Return $z_K$
	\caption{$\FullLoopsm()$}
\end{algorithm}

To analyze the algorithm, we first note that for all of the setups in 
Section~\ref{sec:apps}, the norm on 
$\zset=\xset\times\yset$ satisfies $\norm{z}^2 = \norm{z\x}^2 + 
\norm{z\y}^2$ and that $r\x$ and $r\y$ are 1-strongly convex w.r.t.\ the 
corresponding block norms. Therefore, $\rhat$ is 1-strongly convex 
w.r.t.\ the rescaled norm
\begin{equation*}
\norm{z}_{\mathrm{r}}^2 \defeq \rho \norm{z\x}^2 + \frac{1}{\rho} 
\norm{z\y}^2.
\end{equation*}
Moreover, it is straightforward to verify that our $\ell_1$-$\ell_1$ and 
$\ell_2$-$\ell_2$ centered gradient estimators are also centered w.r.t.\ 
$\norm{\cdot}_\mathrm{r}$ and its dual $\norm{\gamma}_{\mathrm{r},*}^2 
= 
\frac{1}{\rho } \norm{\gamma\x}_*^2 + \rho  \norm{\gamma\y}_*^2$.  
For example, the proof of Lemma~\ref{lem:l1-gradient-est} changes to
\begin{equation*}
\norm{\tilde{g}_{w_0}(w) - g(w_0)}_{\mathrm{r},*}^2 
=  
\frac{1}{\rho }\linf{\ai}^2 \lones{w\y -w_0\y}^2 + 
\rho  \linf{\aj}^2 \lones{w\x -w_0\x}^2
\le \norm{A}_{\max}^2\norm{w - w_0}_{\mathrm{r}}^2.
\end{equation*}
The $\ell_2$-$\ell_1$ setup is less immediate, but retracing the proof of 
Proposition~\ref{prop:innerloop-cbb} shows that the 
guarantee~\eqref{eq:innerloop-guarantee} holds for the Bregman 
divergence induced by $\rhat$ when we run Algorithm~\ref{alg:innerloop} 
with the gradient estimator~\eqref{eq:tgdef-l2-dynamic}, the distance 
generating function $\rhat$ (i.e., different step sizes for each block) and 
the parameters of Proposition~\ref{prop:innerloop-cbb} with only one  
change: instead of $\tau=\frac{1}{\eta}$ we take $\tau = 
\frac{1}{\eta\rho }$. This keeps the local norms argument valid, as we 
also change the step size in the $\yset$ block from $\eta$ to $\eta 
\rho $; we give a detailed explanation in 
Appendix~\ref{app:rescaled-l2-l1}. 

Finally---as we explain in the previous sections---for the 
guarantee~\eqref{eq:innerloop-guarantee} to imply an 
$(\alpha,0)$-relaxed proximal oracle we require the following property 
from $\phi$ 
and $\psi$:
\begin{equation}\label{eq:phi-psi-regret-convexity-req}
\inner{\phi(z\x)+\psi(z\y)}{z-u}~\text{is convex in $z$ for all $u\in\zset$}.
\end{equation} 
We remark that quadratic and negative entropy functions (and their induced divergences) satisfy this 
requirement, and those are perhaps the most common strongly convex 
composite terms for the geometries of interest. Moreover, we can relax this 
requirement by using restarting  (Algorithm~\ref{alg:highprecision}) for  the 
inner loop of Algorithm~\ref{alg:full-loop}, at the cost of an additional 
logarithmic term in the runtime.

With the algorithmic and analytic adjustments in place, we have the 
following result.

\begin{proposition}
\label{prop:minEB-strong}
For the problem~\eqref{eq:composite-bilinear} 
satisfying~\eqref{eq:composite-bounded},~\eqref{eq:composite-stronly} 
and~\eqref{eq:phi-psi-regret-convexity-req},  
Algorithm~\ref{alg:full-loop} together with the centered gradient 
estimators form Section~\ref{sec:apps} (with $\tau=(\eta 
\sqrt{\mu\x/\mu\y})^{-1}$ in the $\ell_2$-$\ell_1$ setup) returns a point 
$z$ such that $\E 
\gap(z) \le \epsilon$ and runs in 
time
\begin{equation*}
O\left( \nnz(A) + \sqrt{\nnz(A)\cdot (m+n)} 
\cdot \frac{L}{\sqrt{\mu\x\mu\y}}\cdot 
\log\frac{(B+L)\sqrt{\hat{\Theta}}}{\epsilon}
\right),
\end{equation*}
where $\hat{\Theta}= \left(\sqrt{\mu\x/\mu\y} + 
\sqrt{\mu\y/\mu\x}\right)^2\cdot \Theta$, 
\begin{equation*}
L = 
\begin{cases}
\norm{A}_{\max} & \mbox{in the $\ell_1$-$\ell_1$ setup} \\
\max_{i\in[n]}\ltwo{\ai} & \mbox{in the $\ell_2$-$\ell_1$ setup} \\
\lfro{A} & \mbox{in the $\ell_2$-$\ell_2$ setup}  \\
\end{cases}
~~\mbox{and}~~
\Theta = 
\begin{cases}
\log(mn) & \mbox{in the $\ell_1$-$\ell_1$ setup} \\
\log(2m) & \mbox{in the $\ell_2$-$\ell_1$ setup} \\
1 & \mbox{in the $\ell_2$-$\ell_2$ setup}.  \\
\end{cases}
\end{equation*}
\end{proposition}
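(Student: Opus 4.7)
The plan is to combine the strongly-monotone outer loop (Algorithm~\ref{alg:outerloop-sm}/\ref{alg:full-loop}) with the relaxed proximal oracle implemented by the centered gradient estimators from Section~\ref{sec:apps}, handling the potential mismatch between $\mu\x$ and $\mu\y$ by working in the rescaled distance generating function $\hat{r}$ of~\eqref{eq:rescaled-r}. First, I would verify that under this rescaling the composite objective is $\mu$-strongly monotone relative to $\hat{r}$ with $\mu=\sqrt{\mu\x\mu\y}$ and that the range of $\hat{r}$ is bounded by $\hat\Theta$, so that Proposition~\ref{prop:outerloopproof-sm} (extended to the composite setting as in Section~\ref{ssec:composite}) applies with parameter $\mu$.

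Next, I would verify that each of the three centered gradient estimators of Section~\ref{sec:apps} is $(w_0,L)$-centered with respect to the rescaled norm $\norm{z}_\mathrm{r}^2 = \rho\norm{z\x}^2 + \rho^{-1}\norm{z\y}^2$ and the same constant $L$ given in the proposition. For the $\ell_1$-$\ell_1$ and $\ell_2$-$\ell_2$ setups this is the direct calculation the authors sketch (the $\rho$ factors in the dual norm cancel with those in the primal norm). For the $\ell_2$-$\ell_1$ setup, I would invoke the result of Appendix~\ref{app:rescaled-l2-l1}: with $\tau=(\eta\rho)^{-1}$, the CBB analysis underlying Proposition~\ref{prop:innerloop-cbb} remains valid under rescaling. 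In all cases, Proposition~\ref{prop:innerloopproof} (or \ref{prop:innerloop-cbb}) together with Corollary~\ref{cor:innerloop-oracle} and assumption~\eqref{eq:phi-psi-regret-convexity-req} (which ensures the saddle-form argument of Corollary~\ref{cor:innerloop-oracle} extends to the composite case) give that Algorithm~\ref{alg:full-loop}'s inner loop implements an $(\alpha,0)$-relaxed proximal oracle in $T=O(L^2/\alpha^2)$ iterations, each costing $O(n+m)$.

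Then I would apply the composite version of Proposition~\ref{prop:outerloopproof-sm} with $\veps=0$ to obtain
\begin{equation*}
\E\,\gap(z_K) \le \sqrt{2}\,G\sqrt{\left(\tfrac{\alpha}{\mu+\alpha}\right)^K \hat\Theta},
\end{equation*}
where $G$ bounds the composite gradient norm under $\norm{\cdot}_{\mathrm{r},*}$; by the assumption~\eqref{eq:composite-bounded} on $\phi,\psi$ together with the standard bound on $\norm{A^\top y}_*$ and $\norm{Ax}_*$ on the relevant domains, we have $G = O(B+L)$. Setting $K = O\bigl((1+\alpha/\mu)\log\frac{(B+L)\sqrt{\hat\Theta}}{\epsilon}\bigr)$ using $\log(1+\mu/\alpha) \ge \mu/(\mu+\alpha)$ ensures $\E\,\gap(z_K)\le\epsilon$.

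Finally, I would optimize $\alpha$. Each outer iteration costs $O(\nnz(A) + T(n+m)) = O(\nnz(A) + L^2(n+m)/\alpha^2)$, so the total runtime is $O\bigl((1+\alpha/\mu)(\nnz(A)+L^2(n+m)/\alpha^2)\log\tfrac{(B+L)\sqrt{\hat\Theta}}{\epsilon}\bigr)$. Choosing $\alpha = \max\{\mu, L\sqrt{(n+m)/\nnz(A)}\}$ balances the two terms and yields the claimed bound $O(\nnz(A) + \sqrt{\nnz(A)(n+m)}\,L/\sqrt{\mu\x\mu\y}\cdot \log((B+L)\sqrt{\hat\Theta}/\epsilon))$. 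The main obstacle is the $\ell_2$-$\ell_1$ case: I need the rescaled local-norms argument to go through with the modified truncation threshold $\tau=(\eta\rho)^{-1}$, since naively rescaling changes the effective step size in the $\yset$-block from $\eta$ to $\eta\rho$, which is precisely what keeps $\eta\rho\cdot\tau=1$ and preserves the local-norms inequality used in Proposition~\ref{prop:innerloop-cbb}; this is the nontrivial verification relegated to Appendix~\ref{app:rescaled-l2-l1}.
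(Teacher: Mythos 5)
Your proof follows essentially the same route as the paper's: rescale the distance-generating function by $\rho=\sqrt{\mu\x/\mu\y}$ so that $\sqrt{\mu\x\mu\y}$-strong monotonicity relative to $\hat r$ holds, show the estimators remain centered (or CBB with the rescaled $\tau$) in the rescaled norms, apply the composite version of Proposition~\ref{prop:outerloopproof-sm}, then choose $K$ and $\alpha$. The arithmetic for $K$ via $\log(1+\mu/\alpha)\ge\mu/(\mu+\alpha)$ and the final choice $\alpha = \max\{\mu, L\sqrt{(n+m)/\nnz(A)}\}$ are both sound; the paper simply writes $\alpha = L\sqrt{(m+n)/\nnz(A)}$ and absorbs the $K\ge 1$ floor into the leading $\nnz(A)$ term.

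One small bookkeeping imprecision: you say $G=O(B+L)$ ``bounds the composite gradient norm under $\|\cdot\|_{\mathrm{r},*}$'' and then plug $\hat\Theta$ into $\E\gap(z_K)\le\sqrt{2}G\sqrt{(\alpha/(\mu+\alpha))^K\hat\Theta}$. As stated, this double-counts nothing only by accident: the rescaled dual norm of a vector bounded by $B+L$ in the original dual norm is $\sqrt{\rho+\rho^{-1}}\,(B+L)$, and the range of $\hat r$ is $(\rho+\rho^{-1})\Theta$, not $(\rho+\rho^{-1})^2\Theta = \hat\Theta$; together these two $\rho$-factors combine to give $\hat\Theta$, which is what the paper's derivation (bounding $\|z_K-z^\star\|$ via $\hat V$ with a $(\rho+\rho^{-1})$ factor, and taking $G=L+B$ in the \emph{original} norm) does cleanly. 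It would be more precise to either keep $G=O(B+L)$ in the original dual norm and invoke strong convexity of $\hat r$ with the associated $(\rho+\rho^{-1})$ penalty, or bound $G$ in the rescaled norm and use the range $(\rho+\rho^{-1})\Theta$ of $\hat r$ — but not mix the two. The final bound you reach is correct.
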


\begin{proof}
For each setup, Proposition~\ref{prop:innerloopproof} and 
Proposition~\ref{prop:innerloop-cbb} (for the $\ell_2$-$\ell_1$ setup) 
together with the assumption~\eqref{eq:phi-psi-regret-convexity-req} 
guarantee that the $L$-centered gradient estimators of 
Section~\ref{sec:apps} implement an $(\alpha,0)$-relaxed proximal oracle 
with respect to the rescaled distance generating function~\eqref{eq:rescaled-r}, when
we choose $\eta$ appropriately and take $T=O(L^2/\alpha^2)$. 
Also, we have $\norm{(\grad_x f(z), -\grad_y f(z'))}_* = 
\norm{(A^\top z\y, -A {z'}\x)}_* \le O(L)$ for all $z,z'\in\zset$ in every 
setup. Therefore, using the boundedness 
assumption~\eqref{eq:composite-bounded} and 
Proposition~\ref{prop:outerloopproof-sm} with $\rho=\sqrt{\mu\x/\mu\y}$, $\mu=\sqrt{\mu\x\mu\y}$, and $G=L+B$ gives
\begin{equation*}
\begin{aligned}
	 \E \gap(z_K) \le G\, \E \norm{z_K - z^\star} &  \le \sqrt{2}G \sqrt{ \left(\rho+\frac{1}{\rho}\right)\E\left[ \rho V\x_{z_K}(z^\star)+\frac{1}{\rho} V\y_{z_K}(z^\star)\right]}\\
	 & \le \sqrt{2}G \sqrt{ \left(\rho+\frac{1}{\rho}\right)^2\frac{\alpha^K\Theta}{(\mu+\alpha)^K}}.
\end{aligned}
\end{equation*} 
By choosing
$K=O\Big(\frac{\alpha}{\mu}\log \big[
\frac{(B+L){\sqrt{\Theta}}}{\epsilon}(\rho+\rho^{-1})\big]\Big)$, we have $\E\gap(z_K)\le\eps$. Moreover, since we assume $\phi$ and 
$\psi$ admit efficient proximal mappings, each inner loop iteration of 
Algorithm~\ref{alg:full-loop} runs in time $O(n+m)$ and consequently the 
whole inner loop  
runs in time $O( \frac{L^2}{\alpha^2}(m+n))$. Each outer loop requires 
additional $O(\nnz(A))$ time for the exact gradient computations. The 
overall runtime is therefore
\begin{equation*}
O\left( \left[\nnz(A) + (m+n)\frac{L^2}{\alpha^2}\right]
\cdot \frac{\alpha}{\mu} \cdot \log\frac{(B+L)\sqrt{\hat{\Theta}}}{\epsilon}\right).
\end{equation*}
Choosing $\alpha = L \sqrt{\frac{m+n}{\nnz(A)}}$ establishes the claimed 
runtime bound.
\end{proof}

To conclude this section we remark that that adding an additional 
composite term of the form $\phi=\frac{\veps}{\Theta} r\x$ changes the 
objective by at most $\veps$ everywhere in the domain. Therefore, if the 
condition~\eqref{eq:composite-stronly} holds with $\mu\x=0$ and 
$\mu\y > 0$, we can solve the problem to $\epsilon$ accuracy in time 
$\Otil{\nnz(A) + \sqrt{\nnz(A)\cdot 
(m+n)}\frac{L\sqrt{\Theta}}{\sqrt{\mu\y \epsilon}}}$ 
by adding the above regularizer with $\veps =\epsilon/2$ and running 
Algorithm~\ref{alg:full-loop}.

\subsection{Finite-sum problems}
\label{ssec:finitesum}

Consider finite-sum saddle point problems of the form
\[
\min_{x\in\xset}\max_{y\in\yset} f(x,y),~\mbox{where}~
f(x,y)= \frac{1}{K}\sum_{k\in[K]}f_k(x,y)
\]
is convex in $x$ and concave in $y$, and each component $f_k$ 
has $L_k$-Lipschitz gradient mapping
\begin{equation*}
g_k(x,y)\defeq(\grad_x f_k(x,y),-\grad_y f_k(x,y)).
\end{equation*}
This is the standard setting in the literature on variance reduction, where 
the following stochastic gradient estimator is familiar~\cite{Johnson13}. 
For reference point $w_0\in\zset$, draw index $k$ with probability
\begin{equation*}
p_k \defeq \frac{L_k}{\sum_{k’\in[K]}L_{k'}}
\end{equation*}
and set
\begin{equation}
\label{eq:tgdef-finitesum}
	\tilde{g}_{w_0}(w)=\frac{1}{p_k\cdot 
	K}\left(g_k(w)-g_k(w_0)\right)+\frac{1}{K}\sum_{k\in[K]}g_k(w_0).
\end{equation}
This gradient estimator is centered according to 
Definition~\ref{def:w-l-centered}.
\begin{lemma}
	The estimator~\eqref{eq:tgdef-finitesum} is $(w_0,L)$-centered with 
	$L=\frac{1}{K}\sum_{k\in[K]}L_k$.
\end{lemma}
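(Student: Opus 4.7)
The plan is to verify the two conditions of Definition~\ref{def:w-l-centered} by a direct calculation with importance sampling, exactly as in the classical SVRG analysis but carried out in the dual norm.

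For unbiasedness, I would condition on the random index $k$ and note that the $1/p_k$ factor exactly cancels the sampling probability: $\E \tilde{g}_{w_0}(w) = \sum_{k} p_k \cdot \frac{1}{p_k K}(g_k(w) - g_k(w_0)) + \frac{1}{K}\sum_{k} g_k(w_0) = \frac{1}{K}\sum_{k} g_k(w) = g(w).$ This is the usual control-variate identity and requires no Lipschitz assumption.

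For the variance bound, the crucial simplification is that $g(w_0) = \frac{1}{K}\sum_k g_k(w_0)$, so the deterministic ``correction'' term in $\tilde{g}_{w_0}(w)$ is precisely $g(w_0)$ and the difference reduces to a single random term: $\tilde{g}_{w_0}(w) - g(w_0) = \frac{1}{p_k K}\bigl(g_k(w) - g_k(w_0)\bigr).$ Taking expectations of its squared dual norm,
\[
\E \norm{\tilde{g}_{w_0}(w) - g(w_0)}_*^2 = \sum_{k\in[K]} \frac{1}{p_k K^2} \norm{g_k(w)-g_k(w_0)}_*^2 \le \sum_{k\in[K]} \frac{L_k^2}{p_k K^2} \norm{w-w_0}^2,
\]
using the $L_k$-Lipschitz assumption on each $g_k$. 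Substituting $p_k = L_k / \sum_{k'} L_{k'}$, the $L_k$ in the numerator cancels one power of $L_k$ in the denominator, producing $\frac{1}{K^2}\bigl(\sum_{k'} L_{k'}\bigr)\sum_k L_k \,\norm{w-w_0}^2 = L^2 \norm{w-w_0}^2$ with $L = \frac{1}{K}\sum_k L_k$, as claimed.

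There is essentially no obstacle here: the proof is a bookkeeping exercise whose only subtlety is recognizing that the importance-sampling weights $p_k \propto L_k$ are precisely what balances the per-component variance contributions to yield $L = \tfrac{1}{K}\sum_k L_k$ rather than a larger constant (e.g.\ $\max_k L_k$, which would arise from uniform sampling).
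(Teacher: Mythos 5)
Your proof is correct and follows exactly the same route as the paper: verify unbiasedness by direct cancellation, reduce $\tilde{g}_{w_0}(w) - g(w_0)$ to the single importance-weighted term, bound its squared dual norm using each $g_k$'s Lipschitz constant, and substitute the choice $p_k \propto L_k$. The only cosmetic difference is that you observe the final step is an equality, whereas the paper writes it as an inequality; nothing of substance differs.
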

\begin{proof}
The first property follows immediately. The second property follows by 
noting that
\begin{flalign*}
	\E\norm{\tilde{g}_{w_0}(w)-g(w_0)}_*^2 
	& =\sum_{k\in[K]}\frac{1}{p_k K^2} 		   
	   \norm{g_k(w)-g_k(w_0)}_*^2 \\&
	\stackrel{(i)}{\le} 
	\sum_{k\in[K]}\frac{1}{p_k}\cdot\frac{L_k^2}{K^2}\norm{w-w_0}^2 
	\stackrel{(ii)}{\le} 
	\Bigg(\frac{1}{K}\sum_{k\in[K]}L_k\Bigg)^2\norm{w-w_0}^2,
\end{flalign*}
where we use $(i)$ that $g_k$ is $L_k$-Lipschitz and $(ii)$ the choice of 
$p_k$. 
\end{proof}

\newcommand{\Tgrad}{\mathsf{T}_\grad}

Using Algorithm~\ref{alg:highprecision} and the gradient 
estimator~\eqref{eq:tgdef-finitesum} we can implement a relaxed proximal 
oracle (if $x\mapsto\inners{\sum_{k\in[K]} g_k(x)}{x-u}$ is convex for
every 
$u$, as in the bilinear case, we can use Algorithm~\ref{alg:innerloop} 
instead, improving the runtime by a logarithmic factor). Combining this oracle with Algorithm~\ref{alg:outerloop} and 
using $\alpha=L/\sqrt{K}$, we 
solve the finite-sum problem to $\epsilon$ accuracy (i.e., find a point $z$ 
such that $\E \gap(z) \le \epsilon$) in time
\begin{equation*}
\Otilb{ \left[K + \sqrt{K}\frac{L\Theta}{\eps}\right]\Tgrad}
\end{equation*}  
where $\Tgrad$ is the maximum of $m,n$ and the time to compute $g_k$ 
for any single $k\in[K]$. In contrast, solving the problem using the 
mirror-prox method \cite{Nemirovski04} takes time $\O{ \frac{L\Theta}{\epsilon} K\Tgrad}$ 
which can be significantly higher when $K$ is large. 

If in addition $\frac{1}{K}\sum_{k\in[K]}g_k$ is $\mu$-strongly monotone 
relative to the distance generating function (see 
Section~\ref{ssec:strongly}), using 
Algorithm~\ref{alg:outerloop-sm} instead of Algorithm~\ref{alg:outerloop} 
allows us to find an $\epsilon$-accurate solution in time
\begin{equation*}
\Otilb{ \left[K + \sqrt{K}\frac{L}{\mu}\right]\Tgrad}.
\end{equation*}   %
\subsection{Additional gradient estimators}\label{sec:additional}

We revisit the bilinear setting studied in 
Section~\ref{sec:apps} and provide additional gradient estimators that meet 
our variance requirements. 
 In 
Section~\ref{ssec:l1l2-add} we consider $\ell_2$-$\ell_1$ games 
and construct an ``oblivious'' estimator for the $\yset$ component of the 
gradient that involves sampling from a distribution independent of the 
query point. In  
Section~\ref{ssec:l2l2-add} we describe two additional 
centered gradient estimators for $\ell_2$-$\ell_2$ games; one of them is 
the ``factored splits'' estimator proposed in~\citep{BalamuruganB16}.

\subsubsection{$\ell_2$-$\ell_1$ games}\label{ssec:l1l2-add}
Consider the $\ell_2$-$\ell_1$ setup described in the beginning of 
Section~\ref{ssec:l1l2}. 
We describe an alternative for the $\yset$ component 
of~\eqref{eq:tgdef-l2-dynamic}, that is ``oblivious'' in the sense that it 
involves sampling from distributions that do not depend on the current 
iterate. The estimator generates each coordinate of $\tilde{g}_{w_0}\y$ 
independently in the following way: for every $i\in[m]$ we define the 
probability 
$q^{(i)}\in\Delta^n$ by
\begin{equation*}
q^{(i)}_j = {A_{ij}^2}/{\ltwo{\ai}^2},~~\forall j\in[n].
\end{equation*}
Then, independently for every $i\in[m]$, draw $j(i)\sim q^{(i)}$ and set 
\begin{equation}
\label{eq:tgdef-l2-oblivious}
[\tilde{g}_{w_0}\y\left(w\right)]_i =
-[Aw\x_0]_i -\trunc\left( 
A_{ij(i)}\frac{[w\x]_{j(i)}-[w_0\x]_{j(i)}}{q_{j(i)}^{(i)}}\right)
,
\end{equation}
where $\trunc$ is the clipping operator defined 
in~\eqref{eq:tgdef-l2-dynamic}. Note that despite requiring $m$ 
independent samples from different distributions over $n$ elements, 
$\tilde{g}_{w_0}\y$ still admits efficient evaluation. This is because the 
distributions $q\sind{i}$ are fixed in advance, and we can pre-process them 
to perform each of the $m$ samples in time  $O(1)$~\citep{Vose91}. 
However, the oblivious gradient estimator produces fully dense estimates 
regardless of the sparsity of $A$, which may prohibit further runtime improvements when the maximum number of nonzero 
elements in columns of $A$ is smaller than the dimension $m$.

The oblivious 
estimator has the same ``centered-bounded-biased'' properties 
(Definition~\ref{def:z-l-cbb}) as the ``dynamic'' 
estimator~\eqref{eq:tgdef-l2-dynamic}.

\begin{restatable}{lemma}{restateClipped}
	\label{lem:l2-gradient-est-oblivious}
	In the $\ell_2$-$\ell_1$ setup, a gradient estimator with $\xset$ block 
	as in~\eqref{eq:tgdef-l2-dynamic} and $\yset$ block as 
	in~\eqref{eq:tgdef-l2-oblivious} is $(w_0,L,\tau)$-CBB with $L = 
	\nA$.
\end{restatable}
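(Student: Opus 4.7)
The plan is to verify each of the three CBB properties in Definition~\ref{def:z-l-cbb} separately, leveraging the fact that the $\xset$-block is unchanged from the dynamic estimator~\eqref{eq:tgdef-l2-dynamic}, so its contributions to properties (1) and (3) are inherited directly from Lemma~\ref{lem:l2-gradient-est-dynamic}. All new work concerns the $\yset$-block given by~\eqref{eq:tgdef-l2-oblivious}, and there I would follow the same outline as the proof of Lemma~\ref{lem:l2-gradient-est-dynamic}, adapted for the new fixed sampling distributions $q^{(i)}_j = A_{ij}^2 / \ltwo{\ai}^2$.

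For the unbiasedness step underlying property~(1), I would first observe that before clipping, $\E_{j\sim q^{(i)}}[A_{ij}([w\x]_j - [w_0\x]_j)/q^{(i)}_j] = [A(w\x - w_0\x)]_i$, so the unclipped estimator equals $g\y(w)$ in expectation coordinatewise. The bias then comes solely from the truncation. Fixing $i\in[m]$, I would repeat the argument from Lemma~\ref{lem:l2-gradient-est-dynamic}: bound $|\E[\tilde{g}\y_{w_0}(w) - g\y(w)]_i|$ by $\sum_{j\in\mathcal{J}_\tau(i)} |A_{ij}|\,|[w\x]_j-[w_0\x]_j|$, where $\mathcal{J}_\tau(i)$ is the set of $j$ on which clipping occurs. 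The defining condition $|A_{ij}|\,|[w\x]_j-[w_0\x]_j|\,\ltwo{\ai}^2/A_{ij}^2 > \tau$ for $j\in\mathcal{J}_\tau(i)$ rearranges to $|A_{ij}| < \ltwo{\ai}^2|[w\x]_j-[w_0\x]_j|/\tau$, so summing yields $\sum_{j\in\mathcal{J}_\tau(i)}|A_{ij}|\,|[w\x]_j-[w_0\x]_j| \le \ltwo{\ai}^2 \ltwo{w\x-w_0\x}^2/\tau$. Taking a max over $i$ gives $\linf{\E\tilde{g}\y_{w_0}(w) - g\y(w)} \le L^2 \ltwo{w\x-w_0\x}^2/\tau$ with $L = \nA$, as required.

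Property~(2) is immediate from the definition of $\trunc$: each coordinate of $\tilde{g}\y_{w_0}(w)-g\y(w_0)$ is produced by $\trunc$ and is therefore bounded in absolute value by $\tau$, so $\linf{\tilde{g}\y_{w_0}(w)-g\y(w_0)}\le\tau$. For property~(3), the $\xset$ contribution already satisfies $\E\ltwo{\tilde{g}\x_{w_0}(w) - g\x(w_0)}^2 \le L^2\lones{w\y-w_0\y}^2$ from the proof of Lemma~\ref{lem:l2-gradient-est-dynamic}. For the $\yset$ contribution, independence of the samples $j(i)$ across $i$ is not even needed because we control each coordinate separately: a direct calculation for fixed $i$ gives, before clipping, $\E[\tilde{g}\y_{w_0}(w) - g\y(w_0)]_i^2 = \sum_j A_{ij}^2([w\x]_j-[w_0\x]_j)^2/q^{(i)}_j = \ltwo{\ai}^2\ltwo{w\x-w_0\x}^2$, and clipping can only reduce the squared magnitude. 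Taking $\max_i$ yields at most $L^2\ltwo{w\x-w_0\x}^2$, which combined with the $\xset$ bound gives property~(3).

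I do not anticipate a genuine obstacle: the $\xset$-block is reused verbatim, and the $\yset$-block computations mirror the dynamic-estimator proof once one recognizes that the key clipping inequality $|A_{ij}| < \ltwo{\ai}^2|[w\x]_j-[w_0\x]_j|/\tau$ on $\mathcal{J}_\tau(i)$ is structurally identical to the one for the query-dependent $q_j(w)$, only now with $\ltwo{\ai}^2$ in place of $\ltwo{w\x-w_0\x}^2$. The mildly delicate point is noting that property~(3) requires only the \emph{per-coordinate} expectation bound (not the full $\linf{\cdot}^2$), which is what makes the oblivious estimator work with the correct constant $\nA$ rather than the suboptimal $(\sum_j\linf{\aj}^2)^{1/2}$ that one would get from a naive worst-case analysis.
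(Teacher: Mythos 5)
Your proposal is correct and follows essentially the same route as the paper's own proof: bound the per-coordinate bias of the $\yset$-block by restricting to the clipping set $\mathcal{J}_\tau(i)$, rearrange the clipping condition to get $|A_{ij}| \le \ltwo{\ai}^2 |[w\x]_j - [w_0\x]_j|/\tau$ and sum to obtain the $\ltwo{\ai}^2\ltwo{w\x-w_0\x}^2/\tau$ bound, observe property (2) directly from $\trunc$, and for property (3) reuse the $\xset$-block bound while noting that clipping only decreases the per-coordinate second moment, which equals $\ltwo{\ai}^2\ltwo{w\x-w_0\x}^2$ under $q^{(i)}$. Your closing remark about the per-coordinate bound being the crucial relaxation that yields $\nA$ rather than $(\sum_j\linf{\aj}^2)^{1/2}$ is accurate and matches the paper's motivation for introducing the CBB definition.
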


\begin{proof}
	We show the bias bound similarly to the proof of  
	Lemma~\ref{lem:l2-gradient-est-dynamic},
	\begin{align*}
	\left|\Ex{}\left[\tilde{g}_{w_0}\y\left(w\right)-g\y\left(w\right)\right]_{i}\right|
	\le\sum_{j\in\mathcal{J}_{\tau}\left(i\right)}\left|A_{ij}\right|\left|[w\x]_{j}-[w_0\x]_{j}\right|
	\end{align*}
	for all $i\in[m]$, where
	\[
	\mathcal{J}_{\tau}\left(i\right)=\left\{ j\in[n]\mid 
	\trunc\left(\frac{A_{ij}}{q_j\sind{i}}\left([w\x]_{j}-[w_0\x]_{j}\right)\right)\ne
	\frac{A_{ij}}{q_j\sind{i}}\left([w\x]_{j}-[w_0\x]_{j}\right)\right\} .
	\]
	Note that $j\in\mathcal{J}_{\tau}\left(i\right)$ if and only if
	\[
	\left|\frac{A_{ij}}{q_j\sind{i}}\left([w\x]_{j}-[w_0\x]_{j}\right)\right| = 
	\frac{\norm{\ai}_2^2\left|[w\x]_j-[w_0\x]_j\right|}{\left|A_{ij}\right|}>\tau
	\Rightarrow
	\left|A_{ij}\right|\le\frac{1}{\tau}\norm{\ai}_2^2\left|[w\x]_j-[w_0\x]_j\right|.
	\]
	Therefore,
	\[
	\sum_{j\in\mathcal{J}_{\tau}\left(i\right)}\left|A_{ij}\right|\left|[w\x]_j-[w_0\x]_j\right|\le\frac{1}{\tau}\norm{\ai}_2^2\sum_{j\in\mathcal{J}_{\tau}}\left|[w\x]_j-[w_0\x]_j\right|^2=\frac{1}{\tau}\left\Vert
	\ai\right\Vert_2 ^{2}\left\Vert w\x-w_0\x\right\Vert _{2}^{2}
	\]
	and $\linf{\E \tilde{g}_{w_0}\y(w) - g\y(w)} \le 
	\frac{L^2}{\tau}\ltwo{w\x-w_0\x}^2$ follows by taking the maximum 
	over $i\in[m]$. 
	
	The second property follows from the definition of $\trunc$. 
	For the third property, note that the bound~\eqref{eq:l2l1-centered-x} 
	on the $\xset$ component still holds, and  that for each $i\in[m]$ we 
	have 
	$q_j\sind{i}={A_{ij}^2}/{\norm{\ai}_2^2}$ and 
	\begin{align*}
	\Ex{}\left[\tilde{g}_{w_0}\y\left(w\right)-g\y\left(w\right)\right]_i^2
	& 
	=\sum_{j\in[n]}q_j\sind{i}\left(\trunc\left(\frac{A_{ij}}{q_j\sind{i}}\left([w\x]_{j}-[w_0\x]_{j}\right)\right
	)\right)^{2}\\
	& 
	\le\sum_{j\in[n]}q_j\sind{i}\left(\frac{A_{ij}}{q_j\sind{i}}\left([w\x]_{j}-[w_0\x]_{j}\right)\right)^{2}=\left\Vert
	 \ai\right\Vert _2^{2}\left\Vert w\x-w_0\x\right\Vert _{2}^{2}.
	\end{align*}
\end{proof}

 \subsubsection{$\ell_2$-$\ell_2$ games}\label{ssec:l2l2-add}
 In the $\ell_2$-$\ell_2$ setup described in Section~\ref{ssec:l2l2} it is  
 possible to use a completely oblivious gradient estimator. It has the 
 form~\eqref{eq:tgdef} with the following sampling distributions that do 
 not depend on $w_0,w$,
 \begin{equation}
 \label{eq:l2l2-oblivious-probs}
 p_i=\frac{\ltwo{\ai}^2}{\lfro{A}^2}
 ~~\mbox{and}~~
 q_j=\frac{\ltwo{\aj}^2}{\lfro{A}^2}.
 \end{equation}
 \citet{BalamuruganB16} use these sampling distributions, referring to them 
 as ``factored splits.'' The gradient estimator resulting 
 from~\eqref{eq:l2l2-oblivious-probs} is also a special case of the gradient 
 estimator~\eqref{eq:tgdef-finitesum} described above. 
 Another option is to use the dynamic sampling probabilities
  \begin{equation}
 \label{eq:l2l2-var-probs}
 p_i(w)=\frac{\ltwo{\ai} \left| [w\y]_i-[w_0\y]_i\right|}{
 	\sum_{i'\in[m]}\ltwo{\ai[i']}\left| [w\y]_{i'}-[w_0\y]_{i'}\right|}
 ~~\mbox{and}~~
 q_j(w)=\frac{\ltwo{\aj} \left| [w\x]_j-[w_0\x]_j\right|}{
 	\sum_{j'\in[n]}\ltwo{\aj[j']}\left| [w\x]_{j'}-[w_0\x]_{j'}\right|}.
 \end{equation}
 Both the distributions above yield centered gradient estimators.

\begin{lemma}
	In the $\ell_2$-$\ell_2$ setup, the estimator~\eqref{eq:tgdef} with 
	either sampling probabilities~\eqref{eq:l2l2-oblivious-probs} 
	or~\eqref{eq:l2l2-var-probs} is $(w_0,L)$-centered for 
	$L=\lfro{A}$.
\end{lemma}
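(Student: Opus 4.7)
The plan is to verify the two properties of Definition~\ref{def:w-l-centered} separately. Unbiasedness (property 1) is immediate: for any sampling distributions $p, q$ with full support on the nonzero coordinates, the estimator~\eqref{eq:tgdef} averages to $g(w) = (A^\top w\y, -A w\x)$ by inspection, exactly as in the proofs of Lemmas~\ref{lem:l1-gradient-est}, \ref{lem:l2-gradient-est-naive}, and~\ref{lem:l2l2-gradient-est-dynamic}. So all the work is in the variance bound.

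The key computation is that since the $\ell_2$-$\ell_2$ dual norm is also Euclidean and the two sampled coordinates are independent, we have
\begin{equation*}
\E \ltwo{\tilde{g}_{w_0}(w) - g(w_0)}^2
= \sum_{i\in[m]} \frac{\ltwo{\ai}^2\, ([w\y]_i-[w_0\y]_i)^2}{p_i}
+ \sum_{j\in[n]} \frac{\ltwo{\aj}^2\, ([w\x]_j-[w_0\x]_j)^2}{q_j}.
\end{equation*}
For the oblivious probabilities~\eqref{eq:l2l2-oblivious-probs}, $p_i = \ltwo{\ai}^2/\lfro{A}^2$ cancels the $\ltwo{\ai}^2$ factor in the $i$-th term and produces $\lfro{A}^2 \ltwo{w\y - w_0\y}^2$; the $\xset$-block is symmetric. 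Summing yields $\lfro{A}^2 \norm{w-w_0}^2$, as desired.

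For the dynamic probabilities~\eqref{eq:l2l2-var-probs}, substituting $p_i(w)$ into the first sum gives
\begin{equation*}
\sum_{i\in[m]} \frac{\ltwo{\ai}^2\,([w\y]_i-[w_0\y]_i)^2}{p_i(w)}
= \left(\sum_{i\in[m]} \ltwo{\ai}\,|[w\y]_i-[w_0\y]_i|\right)^{\!2},
\end{equation*}
since the denominator in $p_i(w)$ is constant in $i$ and the remaining factor cancels one power of $\ltwo{\ai}|[w\y]_i - [w_0\y]_i|$. Applying Cauchy--Schwarz to $\sum_i \ltwo{\ai}\cdot |[w\y]_i - [w_0\y]_i|$ gives the upper bound $\big(\sum_i \ltwo{\ai}^2\big)\cdot \ltwo{w\y-w_0\y}^2 \le \lfro{A}^2 \ltwo{w\y-w_0\y}^2$. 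The $\xset$-block is handled symmetrically, and the two bounds combine via $\norm{w-w_0}^2 = \ltwo{w\x-w_0\x}^2 + \ltwo{w\y-w_0\y}^2$.

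There is no real obstacle here; the only subtle point is recognizing that the dynamic choice~\eqref{eq:l2l2-var-probs} loses nothing asymptotically compared to the oblivious choice~\eqref{eq:l2l2-oblivious-probs}, with Cauchy--Schwarz supplying exactly the slack needed. One should also note a minor degeneracy: if $w\y = w_0\y$ the denominator of $p_i(w)$ vanishes, but in that case the $\yset$-sampling term is simply omitted (equivalently, $\tilde{g}_{w_0}\x(w) = A^\top w_0\y = g\x(w)$), and the corresponding term in the variance bound is zero, so the argument still goes through.
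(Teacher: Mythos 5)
Your proof is correct and follows essentially the same route as the paper: the same expansion of $\E\ltwo{\tilde{g}_{w_0}(w)-g(w_0)}^2$ into a sum over coordinates, direct cancellation for the oblivious probabilities, and Cauchy--Schwarz for the dynamic ones. The extra remark about the degenerate case $w\y=w_0\y$ is a sensible small addition but does not change the substance.
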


\begin{proof}
	Unbiasedness follows from the estimator definition. For the oblivious 
	sampling strategy~\eqref{eq:l2l2-oblivious-probs} the second property 
	follows from
	\begin{align*}
	\Ex{}\norm{\tilde{g}_{w_0}(w) - g(w_0)}_{2}^2
	& = \sum\limits_{i \in [m]} 
	\frac{\norm{\ai}_2^2}{p_i}([w\y]_i-[w_0\y]_i)^2 + \sum\limits_{j\in[n]} 
	\frac{\norm{\aj}_2^2}{q_j}([w\x]_j-[w_0\x]_j)^2\\
	& = \norm{A}_\mathrm{F}^2 \norm{w - w_0}_2^2.
	\end{align*}
	For the dynamic sampling strategy~\eqref{eq:l2l2-var-probs}, we have
	\begin{align*}
	\Ex{}\norm{\tilde{g}_{w_0}(w) - g(w_0)}_{2}^2
	& = \Bigg(\sum_{i'\in[m]}\ltwo{\ai[i']}\left| 
	[w\y]_{i'}-[w_0\y]_{i'}\right|\Bigg)^2 + 
	\Bigg(\sum_{j'\in[n]}\ltwo{\aj[j']}\left| 
	[w\x]_{j'}-[w_0\x]_{j'}\right|\Bigg)^2
	\\
	& \le \norm{A}_\mathrm{F}^2 \norm{w - w_0}_2^2,
	\end{align*}
	where the inequality is due to Cauchy–Schwarz.
\end{proof}
We remark that out of the three sampling 
strategies~\eqref{eq:l2l2-probs},~\eqref{eq:l2l2-oblivious-probs} 
and~\eqref{eq:l2l2-var-probs}, only for~\eqref{eq:l2l2-var-probs} the 
bound $\Ex{}\norm{\tilde{g}_{w_0}(w) - g(w_0)}_{2}^2 \le 
\norm{A}_\mathrm{F}^2 \norm{w - w_0}_2^2$ is an inequality, whereas for 
the other two it holds with equality. Consequently, the dynamic sampling 
probabilities~(\ref{eq:l2l2-var-probs}) might be preferable in certain cases.

\newpage

\arxiv{\subsection*{Acknowledgment}}
\notarxiv{\subsubsection*{Acknowledgments}}
YC and YJ were supported by Stanford Graduate Fellowships. AS was 
supported by the NSF CAREER Award CCF-1844855. KT was supported by 
the NSF Graduate Fellowship DGE1656518.

\bibliographystyle{abbrvnat} %

\newpage

\appendix
\part*{Appendix}
\section{Standard results}
Below we give two standard results in convex optimization: bounding 
suboptimality via regret (Section~\ref{app:gap-bound}) and the mirror 
descent regret bound (Section~\ref{app:mirror-descent}).

\subsection{Duality gap bound}
\label{app:gap-bound}

Let $f:\xset\times\yset\to \R $ be convex in $\xset$, concave in $\yset$ 
and differentiable, and let $g(z)=g(x,y)=(\grad_x f(x,y), -\grad_y f(x,y))$. 
For $z,u\in \zset$ define
\begin{equation*}
\gapu(z;u) \defeq f(z\x,u\y)-f(u\x,z\y) 
~~\mbox{and}~~
\gap(z) \defeq \max_{u\in\zset}\gapu(z;u).
\end{equation*}

\begin{lemma}\label{lem:gap-bound}
	For every $z_1,\ldots,z_K\in\zset$,
	\begin{equation*}
	\gap\left(\frac{1}{K}\sum_{k=1}^K z_k\right)\le \max_{u\in \zset} 
	\frac{1}{K}\sum_{k=1}^K \inner{g(z_k)}{z_k-u}.
	\end{equation*}
\end{lemma}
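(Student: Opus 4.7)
The plan is to chain two standard convexity/concavity arguments: first reduce the gap of the averaged iterate to the average of pointwise gap functions via Jensen, then bound each pointwise gap by the inner product with $g(z_k)$ via a first-order (linearization) inequality.

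First I would fix an arbitrary $u\in\zset$ and let $\bar z=\frac{1}{K}\sum_{k=1}^K z_k$. Using convexity of $f(\cdot,u\y)$ in its first argument and concavity of $f(u\x,\cdot)$ in its second argument, Jensen's inequality gives
\[
f(\bar z\x,u\y)\le \frac{1}{K}\sum_{k=1}^K f(z_k\x,u\y)
\quad\text{and}\quad
-f(u\x,\bar z\y)\le -\frac{1}{K}\sum_{k=1}^K f(u\x,z_k\y),
\]
so $\gapu(\bar z;u)\le \frac{1}{K}\sum_{k=1}^K \gapu(z_k;u)$.

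Next I would bound $\gapu(z_k;u)=f(z_k\x,u\y)-f(u\x,z_k\y)$ by a linearization at $z_k$. Concavity of $f(z_k\x,\cdot)$ at $z_k\y$ yields $f(z_k\x,u\y)\le f(z_k\x,z_k\y)+\inner{\grad_y f(z_k)}{u\y-z_k\y}$, and convexity of $f(\cdot,z_k\y)$ at $z_k\x$ yields $f(u\x,z_k\y)\ge f(z_k\x,z_k\y)+\inner{\grad_x f(z_k)}{u\x-z_k\x}$. Subtracting and recalling $g(z_k)=(\grad_x f(z_k),-\grad_y f(z_k))$ gives
\[
\gapu(z_k;u)\le \inner{\grad_x f(z_k)}{z_k\x-u\x}+\inner{-\grad_y f(z_k)}{z_k\y-u\y}=\inner{g(z_k)}{z_k-u}.
\]

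Combining the two displays, taking the maximum over $u\in\zset$ on both sides (the right-hand side depends on $u$ linearly through the sum, and the maximum commutes with the averaging), and using the definition $\gap(\bar z)=\max_{u}\gapu(\bar z;u)$, I obtain the claimed inequality. There is no real obstacle here: the only mild subtlety is the non-differentiable case, which is handled verbatim by replacing gradients with any subgradient selection in the convention $\inner{\grad r(z)}{w}=\sup_{\gamma\in\partial r(z)}\inner{\gamma}{w}$ adopted earlier in the paper, since the linearization inequalities still hold.
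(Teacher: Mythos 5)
Your proof is correct and essentially the same as the paper's: the paper invokes convexity of $\gapu(\cdot;u)$ in $z$ (which is your Jensen step) and concavity of $\gapu(z;\cdot)$ in $u$ with $\gapu(z;z)=0$ (which is your linearization step), just stated at the level of $\gapu$ rather than unpacked into separate $x$- and $y$-block inequalities for $f$.
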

\begin{proof}
Note that $\gapu(z;u)$ is concave in $u$ for every $z$, and that 
$\gapu(z;z)=0$, therefore
\begin{equation*}
\gapu(z;u) \le \inner{\grad_u \gapu(z;z)}{u-z} = \inner{g(z)}{z-u}.
\end{equation*} 
Moreover, $\gapu(z;u)$ is convex in $z$ for every $u$. 
Therefore, for a sequence $z_1, \ldots, z_K$ and any $u\in\zset$
\begin{equation*}
\gapu\bigg(\frac{1}{K}\sum_{k=1}^K z_k;u\bigg) \le 
\frac{1}{K}\sum_{k=1}^K\gapu(z_k;u) \le 
\frac{1}{K}\sum_{k=1}^K\inner{g(z_k)}{z_k-u}.
\end{equation*}
Maximizing the inequality over $u$ yields the lemma.
\end{proof}

\subsection{The mirror descent regret bound}\label{app:mirror-descent}
Recall that $V_{z}(z')=r(z')-r(z)-\inner{\grad r(z)}{z'-z}$ is the Bregman 
divergence induced by a 1-strongly-convex distance generating function 
$r$.

\begin{lemma}\label{lem:mirror-descent}
	Let $Q:\zset\to\R$ be convex, let $T\in\N$ and let 
	$w_0\in\zset$, $\gamma_0,\gamma_1,\ldots,\gamma_T\in\zset^*$. 
	The sequence 
	$w_1,\ldots,w_T$ defined 
	by
	\begin{equation*}
	w_{t} = \argmin_{w\in\zset}\left\{ 
	\inner{\gamma_{t-1}}{w}+Q(w)+V_{w_{t-1}}(w)
		\right\}
	\end{equation*}
	satisfies for all $u\in\zset$ (denoting $w_{T+1}\defeq u$),
	\begin{flalign*}
	\sum_{t=1}^T\inner{\gamma_t + \grad Q(w_t)}{w_t -u}
	&\le 
	V_{w_0}(u) +
	\sum_{t=0}^{T}
	\left\{  \inner{\gamma_{t}}{w_{t} - w_{t+1}}  - V_{w_{t}}(w_{t+1})\right\}
	\\ & \le
	V_{w_0}(u) + \frac{1}{2}\sum_{t=0}^T \norm{\gamma_t}_*^2.
	\end{flalign*}
\end{lemma}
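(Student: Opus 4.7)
The plan is to apply the standard composite mirror-descent argument, whose three ingredients are (i) the first-order optimality condition at each iterate, (ii) the three-point identity for Bregman divergences~\eqref{eq:three-point}, and (iii) Fenchel--Young / AM--GM for the step-size control. Since the subproblem $\min_w\{\inner{\gamma_t}{w}+Q(w)+V_{w_t}(w)\}$ (which in the lemma's indexing defines $w_{t+1}$) is $1$-strongly convex in $w$, I would take a subgradient $q_{t+1}\in\partial Q(w_{t+1})$ and apply first-order optimality against $u$, obtaining $\inner{\gamma_t+q_{t+1}+\grad V_{w_t}(w_{t+1})}{w_{t+1}-u}\le 0$. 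Substituting the three-point identity yields the one-step bound
\begin{equation*}
\inner{\gamma_t+q_{t+1}}{w_{t+1}-u}\le V_{w_t}(u)-V_{w_{t+1}}(u)-V_{w_t}(w_{t+1}).
\end{equation*}

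Next I would decompose $\inner{\gamma_t}{w_{t+1}-u}=\inner{\gamma_t}{w_t-u}-\inner{\gamma_t}{w_t-w_{t+1}}$ and sum the one-step bound, telescoping the Bregman chain $V_{w_t}(u)-V_{w_{t+1}}(u)$ down to $V_{w_0}(u)$; the convention $w_{T+1}\defeq u$ lets the $t=T$ summand of the stated RHS collect the trailing $\inner{\gamma_T}{w_T-u}-V_{w_T}(u)$ boundary term. Convexity of $Q$ contributes $\inner{q_{t+1}}{w_{t+1}-u}\ge Q(w_{t+1})-Q(u)$, which after index shifting and regrouping reproduces the $\inner{\grad Q(w_t)}{w_t-u}$ summand on the LHS (using any subgradient at $w_t$). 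This establishes the first inequality. For the second inequality, I would bound each RHS summand via Fenchel--Young and AM--GM, namely $\inner{\gamma_t}{w_t-w_{t+1}}\le\norm{\gamma_t}_*\norm{w_t-w_{t+1}}\le\tfrac12\norm{\gamma_t}_*^2+\tfrac12\norm{w_t-w_{t+1}}^2$, and invoke the $1$-strong convexity of $r$ (inherited by $V_{w_t}$) in the form $\tfrac12\norm{w_t-w_{t+1}}^2\le V_{w_t}(w_{t+1})$; this gives $\inner{\gamma_t}{w_t-w_{t+1}}-V_{w_t}(w_{t+1})\le\tfrac12\norm{\gamma_t}_*^2$ and sums to the claimed bound.

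The main obstacle I expect is the index bookkeeping: the natural FOC at $w_{t+1}$ involves $\gamma_t$ paired with $w_{t+1}$ (and $\grad Q(w_{t+1})$), whereas the lemma's LHS pairs $\gamma_t$ with $w_t$ (and $\grad Q(w_t)$). The index shift is purely algebraic, relying on the $w_{T+1}=u$ convention to reroute the boundary contribution and on the decomposition $\inner{\gamma_t}{w_{t+1}-u}=\inner{\gamma_t}{w_t-u}-\inner{\gamma_t}{w_t-w_{t+1}}$ to reshape the remaining sums, but it must be carried out carefully to land on exactly the stated form.
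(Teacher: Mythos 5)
Your plan is the paper's plan: first-order optimality for the subproblem plus the three-point identity~\eqref{eq:three-point} give the one-step bound, the decomposition $\inner{\gamma_t}{w_{t+1}-u}=\inner{\gamma_t}{w_t-u}-\inner{\gamma_t}{w_t-w_{t+1}}$ shifts the $\gamma$-indices, the Bregman terms telescope, and the second inequality is H\"older plus Young plus $1$-strong convexity of $r$. Two notes. The appeal to convexity of $Q$ points the wrong way: the one-step bound already carries the FOC subgradient $q_{t+1}\in\partial Q(w_{t+1})$ paired with $w_{t+1}-u$, and after reindexing this \emph{is} the $\grad Q(w_s)$ term in the stated left-hand side (the paper implicitly treats $Q$ as differentiable, so there is no ambiguity); the lower bound $\inner{q_{t+1}}{w_{t+1}-u}\ge Q(w_{t+1})-Q(u)$ is neither needed nor a license to substitute an arbitrary subgradient of $Q$ at $w_s$. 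More substantively, your hope to ``land on exactly the stated form'' after careful bookkeeping cannot be realized: the $\gamma$-sum the argument produces is $\sum_{t=0}^{T-1}\inner{\gamma_t}{w_t-u}$, and converting it to $\sum_{t=1}^{T}\inner{\gamma_t}{w_t-u}$ requires adding $\inner{\gamma_T}{w_T-u}-\inner{\gamma_0}{w_0-u}$. The first piece is absorbed by the $t=T$ summand of the right-hand-side sum (this is what $w_{T+1}\defeq u$ buys), but the second has nowhere to go, and the argument actually proves the stated bound with an extra $-\inner{\gamma_0}{w_0-u}$ on the right. The paper's ``rearranging the LHS and adding $\inner{\gamma_T}{w_T-w_{T+1}}$'' step quietly drops that term. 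This is harmless in every application here, where $\gamma_0=0$ because the centered estimator is exact at its reference point, but the lemma as written needs either that extra term or the added hypothesis $\gamma_0=0$.
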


\begin{proof}
	Fix $u\equiv w_{T+1}\in\zset$. We note that by definition $w_t$ is the 
	solution 
	of a 
	convex optimization problem with (sub)gradient $\gamma_{t-1} + \grad 
	Q(\cdot) + \grad V_{w_{t-1}}(\cdot)$, 
	and therefore by by the first-order optimality condition \cite[cf.][Chapter 
	VII]{HiriartUrrutyL93} satisfies
	\begin{equation*}
	\inner{\gamma_{t-1} + \grad Q(w_t) + \grad 
	V_{w_{t-1}}(w_t)}{w_t-w_{T+1}} \le 
	0.
	\end{equation*}
	By the equality~\eqref{eq:three-point} we have $-\inner{\grad 
	V_{w_{t-1}}(w_t)}{w_t-w_{T+1}} = 
	V_{w_{t-1}}(w_{T+1}) - 
	V_{w_{t}}(w_{T+1}) - V_{w_{t-1}}(w_t)$. Substituting and summing over 
	$t\in[T]$ gives
	\begin{equation*}
	\sum_{t=1}^{T}\inner{\gamma_{t-1} + \grad Q(w_t)}{w_t-w_{T+1}} \le 
	V_{w_0}(w_{T+1}) - \sum_{t=0}^{T} V_{w_{t}}(w_{t+1}).
	\end{equation*}
	Rearranging the LHS and adding $\inner{\gamma_T}{w_T - w_{T+1}}$ to 
	both 
	sides of the inequality gives
	\begin{equation*}
	\sum_{t=1}^{T}\inner{\gamma_{t} + \grad Q(w_t)}{w_t-w_{T+1}} \le 
	V_{w_0}(w_{T+1}) +
	\sum_{t=0}^{T}
	\left\{  \inner{\gamma_{t}}{w_{t} - w_{t+1}}  - V_{w_{t}}(w_{t+1})\right\},
	\end{equation*}
	which is the first bound stated in the lemma. The second bound follows 
	since for every $t$ we have
	\begin{equation}\label{eq:holder-young-strong}
	\inner{\gamma_{t}}{w_{t} - w_{t+1}} 
	\stackrel{(i)}{\le} 
	\norm{\gamma_t}_* \norm{w_{t} - w_{t+1}}
	\stackrel{(ii)}{\le} 
	\half \norm{\gamma_t}_*^2 + \half\norm{w_{t} - w_{t+1}}^2
	\stackrel{(iii)}{\le} 
	\half \norm{\gamma_t}_*^2 +V_{w_{t}}(w_{t+1})
	\end{equation}
	due to $(i)$ Hölder's inquality, $(ii)$ Young's inequality and $(iii)$ strong 
	convexity of $r$.
\end{proof} %
\notarxiv{
	\section{Proofs from~\Cref{sec:framework}}
	
	\subsection{Derivation of the Nemirovski's conceptual 
	prox-method}\label{app:outerloop}
	\restateOuterLoop*
	\outloopproof
	
	\subsection{Proof of 
		Proposition~\ref{prop:innerloopproof}}\label{app:innerloop}
}

\arxiv{
\section{Proof of 
Proposition~\ref{prop:innerloopproof}}\label{app:innerloop}
}

\restateInnerLoop*

\begin{proof}
	Recall the expression $w_t = 
	\argmin_{w\in\zset}\left\{\inner{\eta\tilde{g}_{w_0}(w_{t - 
			1})}{w} + \frac{\eta\alpha}{2}V_{w_0}(w) + V_{w_{t - 1}}(w) 
	\right\}$ for the iterates of Algorithm~\ref{alg:innerloop}. We apply 
	Lemma~\ref{lem:mirror-descent} with $Q(z) = \inner{g(w_0)}{z} + 
	\tfrac{\alpha}{2}V_{w_0}(z)$ and $\gamma_t=\eta \tilde{\delta}_t$,
	where
	\begin{equation*}
	\tilde{\delta}_t = \tilde{g}_{w_0}(w_t) - g(w_0).
	\end{equation*}
	Dividing through by $\eta$, the resulting regret bound reads
	\begin{equation}\label{eq:composite-regret-basic-rep}
	\sum_{t\in[T]} \inner{\tilde{g}_{w_0}(w_t) + \tfrac{\alpha}{2}\grad 
		V_{w_0}(w_t)}{w_t-u} \le \frac{V_{w_0}(u)}{\eta} 
	+ \frac{\eta}{2}\sum_{t\in[T]} \norms{\tilde{\delta}_t}_*^2,
	\end{equation}
	where we used the fact that $\tilde{\delta}_0 = 0$ to drop the 
	summation over $t=0$ in the RHS. Now, let
	\begin{equation*}
	\tilde{\Delta}_t = g(w_t) - \tilde{g}_{w_0}(w_t).
	\end{equation*}
	Rearranging the inequality~\eqref{eq:composite-regret-basic-rep}, we 
	may write it as 
	\begin{equation}\label{eq:composite-regret-ghost}
	\sum_{t\in[T]} \inner{g(w_t) + \tfrac{\alpha}{2}\grad 
		V_{w_0}(w_t)}{w_t-u} 
	\le
	 \frac{V_{w_0}(u)}{\eta} 
	+ \frac{\eta}{2}\sum_{t\in[T]} \norms{\tilde{\delta}_t}_*^2
	+ \sum_{t\in[T]}  \inners{\tilde{\Delta}_t}{w_t-u}.
	\end{equation}
	Define the ``ghost iterate'' sequence $s_1, s_2, \ldots, s_T$ according to
	\begin{equation*}
	s_t = \argmin_{s\in\zset}\left\{\inners{\eta \tilde{\Delta}_{t-1}}{s} + 
	V_{s_{t - 1}}(s) 
	\right\}
	~~\text{with}~~s_0 = w_0.
	\end{equation*}
	Applying Lemma~\ref{lem:mirror-descent} with $Q=0$ and 
	$\gamma_t=\eta \tilde{\Delta}_t$, we have
	\begin{equation}\label{eq:ghost-regret}
	\sum_{t\in[T]}  \inners{\tilde{\Delta}_t}{s_t-u}
	\le 
	\frac{V_{w_0}(u)}{\eta} 
	+ \frac{\eta}{2}\sum_{t\in[T]} \norms{\tilde{\Delta}_t}_*^2,
	\end{equation} 
	where here too we used $\tilde{\Delta}_0=0$. Writing 
	$ \inners{\tilde{\Delta}_t}{w_t-u} =  \inners{\tilde{\Delta}_t}{w_t-s_t}
	+  \inners{\tilde{\Delta}_t}{s_t-u}$ and 
	substituting~\eqref{eq:ghost-regret} 
	into~\eqref{eq:composite-regret-ghost} we have
	\begin{equation*}
	\sum_{t\in[T]} \inner{g(w_t) + \tfrac{\alpha}{2}\grad 
	 	V_{w_0}(w_t)}{w_t-u} 
	\le
	\frac{2V_{w_0}(u)}{\eta} 
	+ \frac{\eta}{2}\sum_{t\in[T]} \left[\norms{\tilde{\delta}_t}_*^2 + 
	\norms{\tilde{\Delta}_t}_*^2\right]
	+ \sum_{t\in[T]}  \inners{\tilde{\Delta}_t}{w_t-s_t}.
	\end{equation*}
	Substituting  
	\begin{equation*}
	-\tfrac{\alpha}{2}\inner{\grad V_{w_0}(w_t)}{w_t-u}
	=
	\tfrac{\alpha}{2}V_{w_0}(u) - \tfrac{\alpha}{2}V_{w_t}(u) - 
	\tfrac{\alpha}{2}V_{w_0}(w_t) 
	\le 
	\tfrac{\alpha}{2}V_{w_0}(u) -  \tfrac{\alpha}{2}V_{w_0}(w_t)
	\end{equation*}
	and dividing by $T$,  we have
	\begin{equation*}
	\frac{1}{T}\sum_{t\in[T]} \inner{g(w_t)}{w_t-u} 
	  \le 
	  \left(\tfrac{2}{\eta T}+\tfrac{\alpha}{2}\right)V_{w_0}(u) + 
	  \frac{1}{T}\sum_{t\in[T]} \left[
	  	\tfrac{\eta}{2}\norms{\tilde{\delta}_t}_*^2 +
	  	\tfrac{\eta}{2}\norms{\tilde{\Delta}_t}_*^2 -
	  	\tfrac{\alpha}{2} V_{w_0}(w_t) + 
	  	\inners{\tilde{\Delta}_t}{w_t-s_t}\right].
	\end{equation*}
	Subtracting $\alpha V_{w_0}(u)$ from both sides and using 
	$\tfrac{2}{\eta T}-\tfrac{\alpha}{2} \le 0$ due to $T\ge 
	\frac{4}{\eta\alpha}$, we obtain
	\begin{equation*}
	\frac{1}{T}\sum_{t\in[T]} \inner{g(w_t)}{w_t-u} - \alpha V_{w_0}(u)
	\le 
	\frac{1}{T}\sum_{t\in[T]} \left[
	\tfrac{\eta}{2}\norms{\tilde{\delta}_t}_*^2 +
	\tfrac{\eta}{2}\norms{\tilde{\Delta}_t}_*^2 -
	\tfrac{\alpha}{2} V_{w_0}(w_t) + 
	\inners{\tilde{\Delta}_t}{w_t-s_t}\right].
	\end{equation*}
	Note that this inequality holds with probability 1 for all $u$. We may 
	therefore maximize over $u$ and then take expectation, obtaining
	\begin{flalign}\label{eq:innerloop-final-exp-bound}
	&\E \max_{u\in\zset} \bigg\{
	\frac{1}{T}\sum_{t\in[T]} \inner{g(w_t)}{w_t-u} 
	- \alpha V_{w_0}(u)\bigg\}
	\nonumber \\ & \qquad \qquad 
	\le 
	\frac{1}{T}\sum_{t\in[T]} \E \left[
	\tfrac{\eta}{2}\norms{\tilde{\delta}_t}_*^2 +
	\tfrac{\eta}{2}\norms{\tilde{\Delta}_t}_*^2 -
	\tfrac{\alpha}{2} V_{w_0}(w_t) + 
	\inners{\tilde{\Delta}_t}{w_t-s_t}\right].
	\end{flalign}
	It remains to argue the the RHS is nonpositive. By the first centered 
	estimator property, we have
	\begin{equation*}
	\E \big[ \tilde{\Delta}_t \mid w_t, s_t\big] = 
	\E \big[ g(w_t) - 
	\tilde{g}_{w_0}(w_t) \mid w_t, s_t\big]=0
	\end{equation*}
	 and therefore $\E\inners{\tilde{\Delta}_t}{w_t-s_t} =0$ for all $t$. By the 
	 second property 
	 \begin{equation*}
	 \E \norms{\tilde{\delta}_t}_*^2  = 
	 \E \norms{\tilde{g}_{w_0}(w_t) - g(w_0)}_*^2
	 \le 
	 L^2 \norm{w_t - w_0}^2 \le 2L^2 V_{w_0}(w_t),
	 \end{equation*}
	 where the last transition used the strong convexity of $r$. Similarly, by 
	 Lemma~\ref{lem:centered-var} we have
	 \begin{equation*}
	 \E \norms{\tilde{\Delta}_t}_*^2  = 
	 \E \norms{\tilde{g}_{w_0}(w_t) - g(w)}_*^2
	 \le 
	 4L^2 \norm{w_t - w_0}^2 \le 8L^2 V_{w_0}(w_t).
	 \end{equation*}
	 Therefore
	 \begin{equation*}
	 \E\left[ \tfrac{\eta}{2}\norms{\tilde{\delta}_t}_*^2 +
	 \tfrac{\eta}{2}\norms{\tilde{\Delta}_t}_*^2 -
	 \tfrac{\alpha}{2} V_{w_0}(w_t)\right]
	 \le (5\eta L^2 - \tfrac{\alpha}{2})\E V_{w_0}(w_t) = 0,
	 \end{equation*}
	 using $\eta = \frac{\alpha}{10L^2}$.  
\end{proof}

\notarxiv{
\section{The $\ell_1$-$\ell_1$ setup}\label{app:l1l1}

\subsection{Complete pseudo-code}\label{app:l1l1-alg}
\algOneOne

\subsection{Proof of runtime bound}\label{app:l1l1-proof}
\restateThmOneOne*
\proofOneOne
}

\section{The $\ell_2$-$\ell_1$ setup}\label{app:cbb}

\notarxiv{
\subsection{Derivation of gradient clipping}\label{app:l2l1-clip}	
	\presentationTwoOne}

\notarxiv{
With Proposition~\ref{prop:innerloop-cbb} in hand, the proof of 
\Cref{thm:l1l2} follows identically to that of Theorem~\ref{thm:l1l1}, except 
Proposition~\ref{prop:innerloop-cbb} replaces 
Corollary~\ref{cor:innerloop-oracle}, $L$ is now $\nA$ instead of 
$\norm{A}_{\max}$, and  $\Theta = \max_{z'}r(z')- \min_z r(z) = 
\half+\log m \le \log(2m)$ rather than $\log(mn)$.
}

Before giving the proof of  Proposition~\ref{prop:innerloop-cbb} is 
Section~\ref{app:cbb-proof}, we first collect some properties of the KL 
divergence %
in Section~\ref{app:entropy-props}.

\subsection{Local norms bounds}\label{app:entropy-props}

For this subsection, let $\yset$  be the $m$ dimensional simplex 
$\Delta^m$, and let $r(y)=\sum_{i=1}^m y_i \log y_i$ be the negative 
entropy distance generating function. The corresponding Bregman 
divergence is the KL divergence, which is well-defined for any 
$y,y'\in\R^m_{\ge 0}$ and has the form %
\begin{equation}\label{eq:kl-div-formula}
V_{y}(y') = \sum_{i\in[m]} \left[ y_i' \log\frac{y_i'}{y_i}  + y_i - y_i'
\right]= 
\int_{0}^1 dt \int_{0}^{t}   \sum_{i\in[m]} 
\frac{(y_i-y'_i)^2}{(1-\tau)y_i + \tau y'_i}d\tau.
\end{equation}

In the literature, ``local norms'' regret analysis~\citep[Section 
2.8]{Shalev-Shwartz12} 
relies on the fact that
$r^*(\gamma)=\log(\sum_{i} e^{\gamma_i})$ (the conjugate of negative 
entropy in the simplex) is locally smooth with respect 
to a Euclidean 
norm weighted by $\grad r^*(\gamma)  = 
\frac{e^\gamma}{\lone{e^\gamma}}$. More precisely, the Bregman 
divergence $V^*_{\gamma}(\gamma') = r^*(\gamma') - r^*(\gamma) - 
\inner{\grad r^*(\gamma)}{\gamma'-\gamma}$ satisfies
\begin{equation}\label{eq:local-norms}
V^*_{\gamma}(\gamma+\delta) \le \norm{\delta}_{\grad 
r^*(\gamma)}^2 \defeq \sum_{i} [\grad r^*(\gamma)]_i \cdot \delta_i^2~~
\text{whenever }\delta_i\le  1.79~~\forall i.
\end{equation}
Below, we state this bound in a form that is directly applicable to our 
analysis.

\begin{lemma}\label{lem:local-norms-classical}
	Let $y,y'\in\Delta^m$ and $\delta\in\R^m$. If $\delta$ satisfies 
	$\delta_i \le 
	1.79$ for all $i\in[m]$ then the KL divergence $V_{y}(y')$ 
	satisfies 
	\begin{equation*}
	\inner{\delta}{y'-y} - V_{y}(y') \le \norm{\delta}_y^2 \defeq  
	\sum_{i\in[m]} y_i \delta_i^2 
	\end{equation*}
\end{lemma}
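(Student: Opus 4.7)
\medskip

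\noindent\textbf{Proof proposal.} The plan is to bound the left-hand side of the claimed inequality by maximizing over $y'$ and reducing to a scalar exponential estimate. Concretely, I fix $y\in\Delta^m$ and $\delta\in\mathbb{R}^m$ with $\delta_i\le 1.79$ for all $i$, and view
\[
F(y') \defeq \inner{\delta}{y'-y} - V_{y}(y') = \sum_{i\in[m]}\Big[\delta_i(y'_i-y_i) - y'_i\log\tfrac{y'_i}{y_i} - y_i + y'_i\Big]
\]
as a function of $y'\in\mathbb{R}^m_{\ge 0}$, using the non-simplex form of the KL divergence from~\eqref{eq:kl-div-formula}. Since $\Delta^m\subset\mathbb{R}^m_{\ge0}$, maximizing over the larger domain only increases the supremum, so it suffices to prove $\sup_{y'\in\mathbb{R}^m_{\ge0}} F(y') \le \sum_i y_i\delta_i^2$.

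The function $F$ is separable and strictly concave in $y'$, with $\partial_{y'_i}F = \delta_i - \log(y'_i/y_i)$, so the unique stationary point is $y^*_i = y_i e^{\delta_i}$. Substituting back and cancelling the linear-in-$\delta$ terms yields the clean formula
\[
\max_{y'\in\mathbb{R}^m_{\ge0}} F(y') = F(y^*) = \sum_{i\in[m]} y_i\big(e^{\delta_i} - 1 - \delta_i\big).
\]
Thus the lemma reduces to the pointwise scalar inequality $e^{a} - 1 - a \le a^2$ for every $a\le 1.79$, applied coordinate-wise with $a=\delta_i$ and then combined using the nonnegativity of $y_i$.

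The only remaining task is to verify this scalar estimate, which is the main (and essentially the only) calculation in the proof. Setting $h(a) = a^2 - e^a + 1 + a$, one computes $h(0)=h'(0)=0$ and $h''(a) = 2 - e^a$, so $h$ is convex on $(-\infty,\ln 2]$ and concave afterwards. On $a\le 0$ the function $h'$ is negative (as $h'(a)\to -\infty$ as $a\to-\infty$ and $h'(0)=0$), hence $h$ is decreasing and $h(a)\ge h(0)=0$. On $a\ge 0$, $h$ first increases to a positive maximum near $a=\ln 2$ and then decreases, crossing zero at approximately $a\approx 1.793$; a direct numerical check gives $h(1.79)>0$, which suffices. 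Since $h(a)\ge 0$ on $(-\infty,1.79]$, the scalar inequality holds, and the lemma follows. I do not anticipate any real obstacle: the argument is essentially a Fenchel-duality-style maximization combined with a one-variable exponential bound, and the choice of the constant $1.79$ is precisely what makes the scalar inequality tight.
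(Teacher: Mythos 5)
Your proof is correct, and it reaches the same scalar estimate $e^a \le 1 + a + a^2$ (for $a \le 1.79$) as the paper's proof, but via a genuinely different and more elementary route. The paper's argument passes through convex conjugacy: it applies the Fenchel--Young inequality to pass from $\inner{\delta}{y'-y} - V_y(y')$ to the conjugate Bregman divergence $V^*_{\grad r(y)}(\grad r(y)+\delta)$ of the log-sum-exp function, and then bounds that quantity directly using \emph{two} scalar estimates, $e^x\le1+x+x^2$ and $\log(1+x)\le x$. You instead treat $F(y')=\inner{\delta}{y'-y}-V_y(y')$ as a separable strictly concave function over the \emph{nonnegative orthant} (using the orthant KL formula~\eqref{eq:kl-div-formula}), find the stationary point $y^*_i = y_i e^{\delta_i}$ in closed form, and plug back to get the exact value $\max_{y'\ge 0} F(y') = \sum_i y_i(e^{\delta_i}-1-\delta_i)$, which dominates the maximum over the simplex. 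This bypasses the conjugate machinery entirely and reduces to a single coordinate-wise exponential bound, which is arguably the cleaner calculation. Two small points worth tightening: (i) like the paper, you should note that the argument formally requires $y$ in the relative interior (so that $\log(y'_i/y_i)$ is finite), with the general case following by continuity; and (ii) your description of the maximum of $h$ being ``near $a=\ln 2$'' is a bit loose (the critical point of $h$ is at $a\approx 1.25$, not $\ln 2\approx 0.69$)---but this is only flavor text; the actual proof of $h\ge 0$ on $[0,1.79]$ goes through by convexity of $h$ on $[0,\ln 2]$, concavity on $[\ln 2,\infty)$, $h(0)=h'(0)=0$, and the endpoint check $h(1.79)>0$, exactly as you sketch.
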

\begin{proof}
	It suffices to consider $y$ in the relative interior of the simplex where 
	$r$ is differentiable; the final result will hold for any $y$ in the simplex 
	by continuity.
	Recall the following general facts about convex conjugates:  
	$\inners{\gamma'}{y'}  - r(y') \le r^*(\gamma')$ for any 
	$\gamma'\in\R^m$, $y=\grad r^* (\grad r(y))$ and  
	$r^*(\grad r(y)) = \inner{\grad r(y)}{y}-r(y)$. Therefore, we have for all 
	$y'\in\Delta^m$,
	\begin{flalign*}
	\inner{\delta}{y'-y} - V_{y}(y')  & = \inner{\grad r(y)+\delta}{y'} - r(y')
	- \left[\inner{\grad r(y)}{y} - r(y)\right] - \inner{y}{\delta}
	\\ & \le
	r^*(\grad r(y)+\delta) - r^*(\grad r(y)) - \inner{\grad r^* (\grad 
	r(y))}{\delta} = V_{\grad r(y)}^*\big(\grad r(y)+\delta\big).
	\end{flalign*}
	The result follows from~\eqref{eq:local-norms} with $\gamma=\grad 
	r(y)$, recalling again that $y=\grad r^* (\grad r(y))$. For completeness we 
	prove~\eqref{eq:local-norms} below,
	following~\cite{Shalev-Shwartz12}. We have
	\begin{flalign*}
	r^*(\gamma + \delta) - r^*(\gamma)  & = 
	\log\left(\frac{\sum_{i\in[m]}e^{\gamma_i + 
	\delta_i}}{\sum_{i\in[m]}e^{\gamma_i}}\right)
	\stackrel{(i)}{\le} 
	\log\left(1 + \frac{\sum_{i\in[m]}e^{\gamma_i}(\delta_i + 
	\delta_i^2)}{\sum_{i\in[m]}e^{\gamma_i}}\right)
	\\ & = 
	\log(1+\inner{\grad r^*(\gamma)}{\delta + \delta^2})
	\stackrel{(ii)}{\le} \inner{\grad r^*(\gamma)}{\delta} + \inner{\grad 
	r^*(\gamma)}{\delta^2},
	\end{flalign*}
	where $(i)$ follows from $e^{x} \le 1+x+x^2$ for all $x\le 1.79$ and 
	$(ii)$ follows from $\log(1+x)\le x$ for all $x$. Therefore, 
	\begin{equation*}
	V^*_{\gamma}(\gamma+\delta) = r^*(\gamma + \delta) - r^*(\gamma) 
	-  
	\inner{\grad r^*(\gamma)}{\delta} \le \inner{\grad 
		r^*(\gamma)}{\delta^2} = \norm{\delta}_{\grad r^*(\gamma)}^2,
	\end{equation*}
	completing the proof.
\end{proof}

\subsection{Proof of 
Proposition~\ref{prop:innerloop-cbb}}\label{app:cbb-proof}
\restateInnerLoopCBB*

\begin{proof}
	Let $w_1, ..., w_T$ denote the iterates of Algorithm~\ref{alg:innerloop} 
	and let $w_{T+1}\equiv u$. We recall the following notation from the 
	proof of Proposition~\ref{prop:innerloopproof}: $\tilde{\delta}_t = 
	\tilde{g}_{w_0}(w_t) - g(w_0)$, $\tilde{\Delta}_t = g(w_t) - 
	\tilde{g}_{w_0}(w_t)$ and $s_t = \argmin_{s\in\zset}\left\{\inners{\eta 
	\tilde{\Delta}_{t-1}}{s} + V_{s_{t - 1}}(s) \right\}$.
	Retracing the steps of the proof of Proposition~\ref{prop:innerloopproof} 
	leading up to the bound~\eqref{eq:innerloop-final-exp-bound}, we 
	observe that by using the first inequality in  
	Lemma~\ref{lem:mirror-descent} 
	rather than the second, the bound~\eqref{eq:innerloop-final-exp-bound} 
	becomes
	\begin{flalign}\label{eq:innerloop-cbb-final-exp-bound}
	\E \max_{u\in\zset} \bigg\{
	\frac{1}{T}\sum_{t\in[T]} \inner{g(w_t)}{w_t-u} 
	- \alpha V_{w_0}(u)\bigg\}
	\le \frac{1}{T}\sum_{t\in[T]} \E \left[- \tfrac{\alpha}{2} V_{w_0}(w_t) + 
	\inners{\tilde{\Delta}_t}{w_t-s_t}\right] 
	\nonumber &\\  %
	\qquad + 
	\frac{1}{\eta T}\sum_{t\in[T]} \E \left[
	\inner{\eta \tilde{\delta}_t}{w_{t}-w_{t+1}} - V_{w_{t}}(w_{t+1})
	+
	\inner{\eta \tilde{\Delta}_t}{s_{t}-s_{t+1}} - V_{s_{t}}(s_{t+1})
	\right].&
	\end{flalign}
	
	Let us bound the various expectations in the 
	RHS of~\eqref{eq:innerloop-cbb-final-exp-bound} one by one. By the 
	first CBB 
	property, $E\big[ 
	\tilde{\Delta}_t\x \mid w_t, s_t\big] =0$ and 
	also 
	$\big\| E\big[ 
	\tilde{\Delta}_t\y \mid w_t, s_t\big]\big\|_* \le 
	\frac{L^2}{\tau}\norm{w_t-w_0}^2$. Consequently,
	\begin{equation*}
	\E \inners{\tilde{\Delta}_t}{w_t-s_t} \le 
	\frac{L^2}{\tau}\E \norm{w_t-w_0}^2 \lone{w_t\y-s_t\y}.
	\end{equation*}
	Using $\lone{y-y'}\le 2$ for every $y,y'\in\yset=\Delta^m$ as well as 
	$\tau = \frac{1}{\eta}$, we obtain
	\begin{equation}\label{eq:cbb-bias-bound}
	\E \inners{\tilde{\Delta}_t}{w_t-s_t} \le 
	2\eta L^2 \E \norm{w_t-w_0}^2 
	\le 
	4\eta L^2 \E V_{w_0}(w_t).
	\end{equation}
	
	To bound the expectation of $\inner{\eta \tilde{\delta}_t}{w_{t}-w_{t+1}} 
	- V_{w_{t}}(w_{t+1})$, we write $w_t = (x_t, y_t)$, and note 
	that for the $\ell_2$-$\ell_1$ setup the Bregman divergence is 
	separable, i.e.\ $V_{w_t}(w_{t+1}) = V_{x_t}(x_{t+1}) + V_{y_t}(y_{t+1})$. 
	For the $\xset$ component, we proceed as in 
	Lemma~\ref{lem:mirror-descent}, and write
	\begin{equation*}
	\inners{\eta \tilde{\delta}_t\x}{x_{t}-x_{t+1}} - V_{x_{t}}(x_{t+1})
	\le 
	\tfrac{\eta^2}{2}\ltwos{\tilde{\delta}_t\x}^2.
	\end{equation*}
	For the $\yset$ component, we observe that
	\begin{equation*}
	\linfs{\eta \tilde{\delta}_t\y} = \eta 
	\linfs{\tilde{g}\y_{w_0}(w_t)-g\y(w_0)} \le 
	\eta \tau = 1
	\end{equation*}
	by the second CBB property and $\tau = \frac{1}{\eta}$. 
	Therefore, we may 
	apply Lemma~\ref{lem:local-norms-classical} with $\delta = -\eta 
	\tilde{\delta}_t\y$ and obtain
	\begin{equation*}
	\inners{\eta \tilde{\delta}_t\y}{y_{t}-y_{t+1}} - V_{y_{t}}(y_{t+1})
	\le \eta^2 \sum_{i\in [m]} [y_t]_i [\tilde{\delta}_t\y]_i^2.
	\end{equation*}
	Taking expectation and using the fact that $y_t$ is in the simplex gives
	\begin{equation*}
	\E \left[ \inners{\eta \tilde{\delta}_t\y}{y_{t}-y_{t+1}} - V_{y_{t}}(y_{t+1}) 
	\right] \le 
	\eta^2 \E \max_{i\in[m]} \E \left[[\tilde{\delta}_t\y]_i^2 \mid w_t \right].
	\end{equation*}
	The third CBB property reads $
	\E \left[\ltwos{\tilde{\delta}_t\x}^2 \mid w_t \right] + 
	\max_{i\in[m]} \E \left[[\tilde{\delta}_t\y]_i^2 \mid w_t \right]
	\le L^2 \norm{w_t-w_0}^2$. Therefore, for $t<T$, the above discussion 
	yields
	\begin{flalign}\label{eq:cbb-delta-bound}
	\E\left[\inners{\eta \tilde{\delta}_t}{w_{t}-w_{t+1}} - 
	V_{w_{t}}(w_{t+1})\right] &\le
	\eta^2 \E\left[ \tfrac{1}{2}\ltwos{\tilde{\delta}_t\x}^2 + 
	\max_{i\in[m]} \E \left[[\tilde{\delta}_t\y]_i^2 \mid w_t \right]
	\right]
	\nonumber\\& \le
	 \eta^2 L^2 \E \norm{w_t-w_0}^2
	\le 2\eta ^2 L^2 \E V_{w_0}(w_t).
	\end{flalign}
	
	To bound the expectation of 
	$\inners{\eta\tilde{\Delta}_t}{s_{t}-s_{t+1}} - V_{s_{t}}(s_{t+1})$,
	decompose it as 
	\begin{flalign*}
	\inners{\eta\tilde{\Delta}_t}{s_{t}-s_{t+1}} - V_{s_{t}}(s_{t+1})
	=& ~
	\frac{2}{3}\left[\inners{\tfrac{3}{2}\eta\tilde{\delta}_t}{s_{t}-s_{t+1}} - 
	V_{s_{t}}(s_{t+1})\right]
	\\ &
	+
	\frac{1}{3}\left[\inners{3\eta\left(g(w_t)-g(w_0)\right)}{s_{t}-s_{t+1}} - 
	V_{s_{t}}(s_{t+1})\right]
	\end{flalign*}
	and bound each of the bracketed terms separately.
	For the first term, we note that the bound $\linf{\frac{3}{2}\eta 
	\tilde{\delta}_t} \le \frac{3}{2} \le 1.79$ holds, so we may still apply 
	Lemma~\ref{lem:local-norms-classical} and (substituting 
	$\eta\to\frac{3}{2}\eta$ in the bound~\eqref{eq:cbb-delta-bound}) 
	obtain $\frac{2}{3}\E 
	\left[\inners{\tfrac{3}{2}\eta\tilde{\delta}_t}{s_{t}-s_{t+1}} - 
	V_{s_{t}}(s_{t+1})\right] \le 3\eta^2 L^2$. The bound for the second term 
	follows directly from the Lipschitz continuity of $g$ strong convexity of 
	$r$ (as in the proofs of Lemma~\ref{lem:mirror-descent} and 
	Proposition~\ref{prop:innerloopproof}), giving
	\begin{equation*}
	\frac{1}{3} \E\left[\inners{3\eta\left(g(w_t)-g(w_0)\right)}{s_{t}-s_{t+1}} - 
	V_{s_{t}}(s_{t+1})\right] \le \frac{3}{2}\eta^2 \E \norm{g(w_t)-g(w_0)}_*^2
	\le 3\eta^2 L^2 \E V_{w_0}(w_t).
	\end{equation*}
	Putting the two bounds together, we obtain
	\begin{flalign}\label{eq:cbb-Delta-bound}
	\E\left[\inners{\eta \tilde{\Delta}_t}{s_{t}-s_{t+1}} - 
	V_{s_{t}}(s_{t+1})\right] 
	\le 6\eta^2 L^2 \E V_{w_0}(w_t).
	\end{flalign}

	Substituting~\eqref{eq:cbb-bias-bound},~\eqref{eq:cbb-delta-bound} 
	and~\eqref{eq:cbb-Delta-bound} back 
	into~\eqref{eq:innerloop-cbb-final-exp-bound}, we have
	\begin{equation*}
	\E \max_{u\in\zset} \bigg\{
	\frac{1}{T}\sum_{t\in[T]} \inner{g(w_t)}{w_t-u} 
	- \alpha V_{w_0}(u)\bigg\}
	\le \frac{1}{T}\sum_{t\in[T]} \left[12\eta L^2 - \tfrac{\alpha}{2}\right]
	 \E V_{w_0}(w_t)
	 = 0
	\end{equation*}
	where the last transition follows from $\eta = \frac{\alpha}{24L^2}$; this 
	establishes the bound~\eqref{eq:innerloop-guarantee} for the iterates of 
	Algorithm~\ref{alg:innerloop} with a CBB gradient estimators. By the 
	argument in the proof of Corollary~\ref{cor:innerloop-oracle}, for  
	$g(z)=(A^\top z\y, -Az\x)$,  the average of those iterates constitutes an 
	$(\alpha,0)$-relaxed proximal oracle.
\end{proof}

\subsection{Rescaled Bregman divergence}\label{app:rescaled-l2-l1}
Let $\rho>0$ and, for $z=(x,y)\in\zset$ and $z'=(x',y')\in\zset$, consider 
the rescaled Bregman divergence
\begin{equation}\label{eq:rescaled-div}
\hat{V}_{z}(z') = \rho {V}_{x}\x(x') +  \frac{1}{\rho}{V}_{y}\y(y'), 
\end{equation}
where $V\x_x(x')=\half\ltwo{x-x'}^2$ and ${V}_{y}\y(y')=\sum_{i\in[m]} y'_i 
\log \frac{y_i'}{y_i}$ are the component Bregman divergences of the 
$\ell_2$-$\ell_1$ setup. Consider running Algorithm~\ref{alg:innerloop} 
with the gradient estimator~\eqref{eq:tgdef-l2-dynamic} and the rescaled 
divergence~\eqref{eq:rescaled-div}, using $\eta$ and $T$ as in 
Proposition~\ref{prop:innerloop-cbb} and $\tau=1/(\eta\rho)$. Here 
we prove that this procedure satisfies~\eqref{eq:innerloop-guarantee} with 
$\hat{V}$ instead of $V$; this is useful for the strongly monotone scheme 
we describe in Section~\ref{ssec:strongly}. 

The proof is analogous to the proof of 
Proposition~\ref{prop:innerloop-cbb}, so we only describe the differences 
while using the same notation.
The bound~\eqref{eq:innerloop-cbb-final-exp-bound} is a direct 
consequence of Lemma~\ref{lem:mirror-descent} and therefore holds with 
$\hat{V}$ replacing $V$ since we run Algorithm~\ref{alg:innerloop} with the 
modified divergence (which is equivalent to using different step sizes per 
block). The gradient estimator~\eqref{eq:tgdef-l2-dynamic} satisfies
$\E \inners{\tilde{\Delta}_t}{w_t-s_t} \le 
\frac{L^2}{\tau}\E \ltwo{w_t\x-w_0\x}^2 \lone{w_t\y-s_t\y}.$ Substituting 
$\tau=\frac{1}{\eta\rho}$ and $\rho \ltwo{w_t\x-w_0\x}^2 \le 
2\hat{V}_{w_0}(w_t)$ shows that~\eqref{eq:cbb-bias-bound} holds with 
$\hat{V}$ replacing $V$ as well. Continuing with the proof, we have
\begin{equation*}
\inner{\eta \tilde{\delta}_t}{w_{t}-w_{t+1}} 
- \hat{V}_{w_{t}}(w_{t+1})
=
\Big[\inners{\eta \tilde{\delta}_t\x}{x_{t}-x_{t+1}} - \rho 
V_{x_{t}}(x_{t+1})\Big]
+\Big[\inners{\eta \tilde{\delta}_t\y}{y_{t}-y_{t+1}} - \frac{1}{\rho} 
V_{y_{t}}(y_{t+1})\Big].
\end{equation*}
Treating each bracketed term separately gives
\begin{equation}\label{eq:rescaled-x-bound}
\inners{\eta \tilde{\delta}_t\x}{x_{t}-x_{t+1}} - \rho V_{x_{t}}(x_{t+1})
=
\rho \left( \inners{\rho^{-1}\eta \tilde{\delta}_t\x}{x_{t}-x_{t+1}} -  
V_{x_{t}}(x_{t+1}) \right)
\le 
\tfrac{\eta^2}{2\rho}\ltwos{\tilde{\delta}_t\x}^2
\end{equation}
and, using the local norms bound,
\begin{equation}\label{eq:rescaled-y-bound}
\inners{\eta \tilde{\delta}_t\y}{y_{t}-y_{t+1}} - \frac{1}{\rho}V_{y_{t}}(y_{t+1})
= \frac{1}{\rho} \left( 
\inners{\rho \eta \tilde{\delta}_t\y}{y_{t}-y_{t+1}} - V_{y_{t}}(y_{t+1}) \right)
\le \rho \eta^2 \sum_{i\in [m]} [y_t]_i [\tilde{\delta}_t\y]_i^2.
\end{equation}
Here, we used $\tau=\frac{1}{\eta\rho}$ and $\linfs{\tilde{\delta}_t\y}\le 
\tau$  to guarantee that 
$\linfs{\rho 
\eta \tilde{\delta}_t\y} \le 1.79$ and consequently the above bound is 
valid. Finally, we observe that the gradient 
estimator~\eqref{eq:tgdef-l2-dynamic} satisfies a slightly stronger 
version of the third CBB property, namely
\begin{equation*}
\E \left[\ltwos{\tilde{\delta}_t\x}^2 
\mid w_t \right]  \le L^2\lone{y_t - y_0}^2
~~\mbox{and}~~
\max_{i\in[m]} \E \left[[\tilde{\delta}_t\y]_i^2 \mid w_t \right]
\le L^2 \ltwo{x_t-x_0}^2.
\end{equation*}
Combining this with the bonds~\eqref{eq:rescaled-x-bound} 
and~\eqref{eq:rescaled-y-bound}, we obtain
 \begin{equation*}
 \E \left[ \inner{\eta \tilde{\delta}_t}{w_{t}-w_{t+1}} 
 - \hat{V}_{w_{t}}(w_{t+1}) \right] \le \eta^2 L^2 \left( \tfrac{1}{\rho}\E 
 \lone{y_t - y_0}^2 +  \rho\E \ltwo{x_t - x_0}^2 
 \right) \le 2\eta^2 L^2 \E \hat{V}_{w_0}(w_t),
 \end{equation*}
 so that \eqref{eq:cbb-delta-bound} holds with $\hat{V}$ replacing $V$. 
 The treatment of $\tilde{\Delta}_t$ is completely analogous and thus the 
 claimed result holds.
 
\notarxiv{
\subsection{Complete pseudo-code}\label{app:l2l1-alg}
\algTwoOne
}

\notarxiv{
\section{The $\ell_2$-$\ell_2$ setup}\label{ssec:l2l2}
\presentTwoTwo
}

\end{document}